\numberwithin{equation}{section}
\newtheorem{theorem}[equation]{Theorem}
\newtheorem{proposition}[equation]{Proposition}
\newtheorem{lem}[equation]{Lemma}
\newtheorem{lemma}[equation]{Lemma}
\newtheorem{characterization}[equation]{Characterization}
\theoremstyle{remark}
\theoremstyle{definition}
\newtheorem{definition}[equation]{Definition}
\newcommand{\abs}[1]{\lvert#1\rvert}
\def\XXint#1#2#3{{\setbox0=\hbox{$#1{#2#3}{\int}$}
	\vcenter{\hbox{$#2#3$}}\kern-.5\wd0}}
\newcommand{\N}{\mathbb N}
\newcommand{\R}{\mathbb R}
\newcommand{\TT}{\mathcal T}
\newcommand{\DD}{\mathcal D}
\newcommand{\RR}{\mathcal R}
\newcommand{\PP}{\mathcal P}
\newcommand{\HH}{\mathbb H}
\newcommand{\im}{\operatorname{Im}}
\newcommand{\card}{\operatorname{Card}}
\newcommand{\cone}{\operatorname{Cone}}
\newcommand{\al}{\alpha}
\def\eps{\epsilon}
\def\th{\theta}
\begin{document}

\title[Besicovitch Covering Property in Heisenberg groups]{Besicovitch Covering Property  for homogeneous distances on the Heisenberg groups}

\author{Enrico Le Donne}

\address[Le Donne]{Department of Mathematics and Statistics, P.O. Box 35,
FI-40014,
University of Jyv\"askyl\"a, Finland}
\email{ledonne@msri.org}

\author{S\'everine Rigot}
\address[Rigot]{Laboratoire de Math\'ematiques J.A. Dieudonn\'e UMR CNRS 7351,  Universit\'e Nice Sophia Antipolis, 06108 Nice Cedex 02, France}
\email{rigot@unice.fr}

\thanks{The work of S.R. is supported by the ANR-12-BS01-0014-01 Geometrya.}

\renewcommand{\subjclassname} {\textup{2010} Mathematics Subject Classification}
\subjclass[]{ 
28C15, 
49Q15, 
43A80. 
}

\keywords{Covering theorems, Heisenberg groups, Homogeneous distances}

\date{July 4, 2014}

\begin{abstract} 
Our main result is a positive answer to the question whether one can find  homogeneous distances on the Heisenberg groups that have the Besicovitch Covering Property (BCP). This property is well known to be one of the fundamental tools of measure theory, with strong connections with the theory of differentiation of measures. We prove that BCP is satisfied by the homogeneous distances whose unit ball centered at the origin coincides with an Euclidean ball. Such homogeneous distances do exist on any Carnot group by a result of Hebisch and Sikora. In the Heisenberg groups, they are related to the Cygan-Kor\'anyi (also called Kor\'anyi) distance. They were considered in particular by Lee and Naor to provide a counterexample to the Goemans-Linial conjecture in theoretical computer science. To put our result in perspective, we also prove two geometric criteria that imply the non-validity of BCP, showing that in some sense our example is sharp. Our first criterion applies in particular to commonly used homogeneous distances on the Heisenberg groups, such as the Cygan-Kor\'anyi and Carnot-Carath\'eodory distances that are already known not to satisfy BCP. To put a different perspective on these results and for sake of completeness, we also give a proof of the fact, noticed by D. Preiss, that in a general metric space, one can always construct a bi-Lipschitz equivalent distance that does not satisfy BCP.
\end{abstract}

\maketitle

\section{Introduction} \label{section:introduction}

Covering theorems are known to be among some of the fundamental tools of measure theory. They reflect the geometry of the space and are commonly used to establish connections between local and global behavior of measures. Covering theorems and their applications have been studied for example in \cite{Fed} and \cite{HP}. There are several types of covering results, all with the same purpose: from an arbitrary cover of a set in a metric space, one extracts a subcover as disjointed as possible. We will consider more specifically here the so-called Besicovitch Covering Property (BCP) which originates from the work of Besicovitch (\cite{B1}, \cite{B2}, see also \cite[2.8]{Fed}, \cite{M}, \cite{P}) in connection with the theory of differentiation of measures. See Subsection~\ref{subsection:bcp} for a more detailed presentation of the Besicovitch Covering Property and its applications.

The geometric setting in which we are interested is the setting of Carnot groups equipped with so-called homogeneous distances, and more specifically here the Heseinberg groups $\HH^n$. Our main result in this paper, Theorem~\ref{thm:main}, is the fact that BCP holds for the homogeneous distances on $\HH^n$ whose unit ball centered at the origin coincides with an Euclidean ball centered at the origin.

These distances are bi-Lipschitz equivalent to any other homogeneous distance on $\HH^n$ and in particular to the commonly used Cygan-Kor\'anyi (also usually called Kor\'anyi or gauge\footnote{We adopt here the terminology \textit{Cygan-Kor\'anyi} distance, that may not be standard, to emphasize the fact that Cygan \cite{cygan} first observed that the natural gauge in the Heisenberg groups actually induces a distance, following in that sense Kor\'anyi \cite{koranyi} who also attribute this distance to Cygan.}) and Carnot-Carath\'eodory distances. Recall that two distances $d$ and $\overline{d}$ are said to be bi-Lipschitz equivalent if there exists $C>1$ such that $C^{-1} d \leq \overline{d} \leq C d$. Cygan-Kor\'anyi and Carnot-Carath\'eodory distances are known not to satisfy BCP (\cite{KoranyiReimann}, \cite{SawyerWheeden}, \cite{Rigot}). We indeed stress that the validity of BCP depends strongly on the distance the space is endowed with, and more specifically on the geometry of its balls. To put some more evidence on this fact and to put our result in perspective, we also prove in the present paper two criteria that imply the non-validity of BCP. They give two large families of homogeneous distances on $\HH^n$ that do not satisfy BCP
and show that in some sense our example for which BCP holds is sharp. See Section~\ref{section:distancesw/obcp}, Theorem~\ref{thm:nobcpingoingcorners} and Theorem~\ref{thm:nobcpoutgoingcorners}. 

As a matter of fact, our first criterion applies to the Cygan-Kor\'anyi and to the Carnot-Carath\'eodory distance, thus giving also new geometric proofs of the failure of BCP for these distances. It also applies to the so-called box-distance (the terminology might not be standard although this distance is a standard homogeneous distance on $\HH^n$, see \eqref{e:box-dist}) thus proving the non-validity of BCP for this latter homogeneous distance as well.

Going back to the distances considered in the present paper and for which we prove that BCP holds, Hebisch and Sikora showed in \cite{Hebisch_Sikora} that in any Carnot group, there are homogeneous distances whose unit ball centered at the origin coincides with an Euclidean ball centered at the origin with a small enough radius. In the specific case of the Heisenberg groups, these distances are related to the Cygan-Kor\'anyi distance. They can indeed be expressed in terms of the quadratic mean of the Cygan-Kor\'anyi distance (at least for some specific value of the radius of the Euclidean ball which coincides with the unit ball centered at the origin) together with the pseudo-distance on $\HH^n$ given by the Euclidean distance between horizontal components. 

These distances have been previously considered in the literature.  Lee and Naor proved in \cite{lee-naor} that these metrics are of negative type on $\HH^n$. Recall that a metric space $(M,d)$ is said to be of negative type if $(M,\sqrt{d})$ is isometric to a subset of a Hilbert space. Combined with the work of Cheeger and Kleiner \cite{cheeger-kleiner} about weak notion of differentiability for maps from $\HH^n$ into $L^1$, which leads in particular to the fact that $\HH^n$ equipped with a homogeneous distance does not admit a bi-Lipschitz embedding into $L^1$, this provides a counterexample to the Goemans-Linial conjecture in theoretical computer science, which was the motivation for these papers. Let us remark that the Cygan-Kor\'anyi distance is not of negative type on $\HH^n$.

We refer to Subsection~\ref{subsection:heisenberg} for the precise definition of our distances and their connection with the Cygan-Kor\'anyi distance and the distances of negative type considered in \cite{lee-naor}.

\subsection{Besicovitch Covering Property} \label{subsection:bcp} Let $(M,d)$ be a metric space. When speaking of a ball $B$ in $M$, it will be understood in this paper that $B$ is a closed ball and that it comes with a fixed center and radius (although these in general are not uniquely determined by $B$ as a set). Thus $B=B_d(p,r)$ for some $p\in M$ and some $r>0$ where $B_d(p,r) = \{q\in M;\; d(q,p)\leq r\}$.

\begin{definition}[Besicovitch Covering Property] One says that the Besicovitch Covering Property (BCP) holds for the distance $d$ on $M$ if there exists an integer $N\geq 1$ with the following property. Let $A$ be a bounded subset of $(M,d)$ and let $\mathcal{B}$ be a family of balls in $(M,d)$ such that each point of $A$ is the center of some ball of $\mathcal{B}$. Then there is a subfamily $\mathcal{F}\subset \mathcal{B}$ whose balls cover $A$ and such that every point in $M$ belongs to at most $N$ balls of $\mathcal{F}$, that is,
\begin{equation*} 
\chi_A \leq \sum_{B\in \mathcal{F}} \chi_B  \leq N,
\end{equation*}
where $\chi_A$ denotes the characteristic function of a set $A$.
\end{definition}

When equipped with a homogeneous distance, the Heisenberg groups turn out to be  doubling metric spaces. Recall that this means that there exists an integer $C\geq 1$ such that each ball with radius $r>0$ can be covered with less than  $C$ balls with radius $r/2$. When $(M,d)$ is a doubling metric space, BCP turns out to be equivalent to a covering property, strictly weaker in general, that we call the Weak Besicovitch Covering Property (w-BCP) (the terminology might not be standard) and with which we shall work in this paper. First, let us fix some more terminology with the following definition.

\begin{definition}[Family of Besicovitch balls]\label{Besicovitch balls}
We say that a family $\mathcal{B}$ of balls in $(M,d)$ is a {\em  family of Besicovitch balls} if $\mathcal{B} = \{B=B_d(x_B,r_B)\}$ is a finite family of balls such that $x_B \not \in B'$ for all $B$, $B'\in \mathcal{B}$, $B\not=B'$, and for which $\bigcap_{B\in \mathcal{B}} B \not= \emptyset$.
\end{definition} 

\begin{definition}[Weak Besicovitch Covering Property] One says that the Weak Besicovitch Covering Property (w-BCP) holds for the distance $d$ on $M$ if there exists an integer $N\geq 1$ such that $\card \mathcal{B} \leq N$ for every family $\mathcal{B}$ of Besicovitch balls in $(M,d)$.
\end{definition} 

The validity of BCP implies the validity of w-BCP. We stress that there exists metric spaces for which w-BCP holds although BCP is not satisfied. However, when the metric is doubling, both covering properties turn out to be equivalent as stated in Characterization~\ref{characterization:equivalentbcp} below.  This characterization can be proved following the arguments of the proof of Theorem 2.7 in \cite{mattila}.

\begin{characterization}[BCP in doubling metric spaces] \label{characterization:equivalentbcp}
Let $(M,d)$ be a doubling metric space. Then BCP holds for the distance $d$ on $M$ if and only if w-BCP holds for the distance $d$ on $M$.
\end{characterization}

As already said, covering theorems and especially the Besicovitch Covering Property and the Weak Besicovitch Covering Property play an important role in many situations in measure theory, regularity and differentiation of measures, as well as in many problems in Harmonic Analysis. This is particularly well illustrated by the connection between w-BCP and the so-called Differentiation theorem. The validity of BCP in the Euclidean space is due to Besicovitch and was a key tool in his proof of the fact that the  Differentiation theorem holds for each locally finite Borel measure on $\R^n$ (\cite{B1}, \cite{B2}, see also \cite[2.8]{Fed}, \cite{M}). Moreover, as emphasized in Theorem~\ref{thm:diff}, the validity of w-BCP  turns actually out to be equivalent to the validity of the Differentiation theorem for each locally finite Borel measure as shown in \cite{P}.

\begin{theorem} \cite[Preiss]{P} \label{thm:diff}
Let $(M,d)$ be a complete separable metric space. Then the Differentiation theorem holds for each locally finite Borel measure $\mu$ on $(M,d)$, that is,
\begin{equation*}
\lim_{r\rightarrow 0^+} \frac{1}{\mu(B_d(p,r))} \int_{B_d(p,r)} f(q) \, d\mu(q) = f(p)
\end{equation*}
for $\mu$-almost every $p\in M$ and for each $f\in L^1(\mu)$ if and only if $M=\cup_{n \in \N} M_n$ where, for each $n\in\N$, w-BCP holds for family of balls centered on $M_n$ with radii less than $r_n$ for some $r_n >0$.
\end{theorem}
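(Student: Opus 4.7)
The plan is to treat the two implications separately. The \emph{if} direction is a classical derivation of differentiation from a covering-theoretic maximal inequality, while the \emph{only if} direction---due to Preiss---is the genuinely difficult part, which requires constructing a bad measure whenever the decomposition hypothesis fails.

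For the \emph{if} direction, I would assume $M=\bigcup_{n\in\N} M_n$ with w-BCP on $M_n$ for radii less than $r_n$, fix a locally finite Borel measure $\mu$, and first establish a weak-type maximal inequality
\begin{equation*}
\mu\bigl(\bigl\{p\in M_n : \sup_{0<r<r_n}\tfrac{1}{\mu(B_d(p,r))}\int_{B_d(p,r)}|f|\,d\mu > \lambda\bigr\}\bigr) \leq \tfrac{C_n}{\lambda}\,\|f\|_{L^1(\mu)}.
\end{equation*}
This would be obtained by a greedy selection of balls around bad points followed by an application of w-BCP, which uniformly bounds the multiplicity of the selected subfamily via the common-intersection and disjoint-center conditions defining a family of Besicovitch balls. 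Combined with the elementary fact that the averaging limit equals $f(p)$ for bounded continuous $f$, and with density of such functions in $L^1(\mu)$ in a complete separable metric space (via Lusin), the maximal inequality upgrades differentiation to all of $L^1(\mu)$ at $\mu$-a.e.\ point of $M_n$, and then to $M$ by countable union.

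For the \emph{only if} direction, I would suppose that no such decomposition exists. A routine exhaustion argument then yields a closed set $K\subset M$ along which w-BCP fails at every scale: for each $\eps>0$ and each $N\in\N$ there is a family of Besicovitch balls with centers in $K$, radii less than $\eps$, and cardinality exceeding $N$. I would convert this failure into a counterexample by an inductive construction on scales. Pick a sequence $r_k\downarrow 0$ and, at scale $k$, a Besicovitch family $\mathcal{B}_k$ with centers in $K$, radii comparable to $r_k$, and a common point $q_k\in\bigcap_{B\in\mathcal{B}_k}B$. A compactness/diagonal extraction in the complete separable space $M$ produces a cluster point $p\in M$ of the $q_k$. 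Define $\mu$ as a weighted sum of point masses at the centers of all the $\mathcal{B}_k$, with weights decaying fast enough to make $\mu$ locally finite, and let $f$ be the indicator of the centers of a carefully chosen half of each $\mathcal{B}_k$. Since the centers of $\mathcal{B}_k$ lie outside one another's balls, a ball around $p$ of radius comparable to $r_k$ essentially sees only the mass of $\mathcal{B}_k$, so the averages of $f$ oscillate between values near $0$ and near $1$ along the sequence $r_k$, and differentiation fails at $p$.

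The main obstacle is the \emph{only if} direction: arranging three coupled scales---of the Besicovitch families, of the weights, and of the radii around $p$---so that the averages at scale $r_k$ are determined by $\mathcal{B}_k$ rather than contaminated by coarser or finer families is the delicate geometric bookkeeping underlying Preiss's construction. The competing requirements of local finiteness of $\mu$, integrability of $f$, and genuine oscillation of the averages force the inductive choices of $r_k$, of the cardinalities of the $\mathcal{B}_k$, and of the weights to be made with considerable care.
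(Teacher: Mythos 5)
First, a point of calibration: the paper does not prove Theorem~\ref{thm:diff} at all; it is quoted from Preiss \cite{P}, so there is no internal proof to compare yours with. Judged on its own terms, your outline has a genuine gap in each direction.

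In the \emph{if} direction, your derivation of the weak-type maximal inequality does not go through with w-BCP alone. A greedy selection of balls around the bad points only guarantees the one-sided condition that the center of each newly selected ball lies outside the previously selected balls; the definition of a family of Besicovitch balls (Definition~\ref{Besicovitch balls}) requires the symmetric condition $x_B\notin B'$ for \emph{all} distinct pairs, and without it w-BCP gives no bound on the multiplicity of the selected subfamily. What your argument actually needs is the full covering statement BCP, and the paper explicitly stresses that w-BCP is strictly weaker than BCP in general metric spaces (they coincide only under a doubling hypothesis, Characterization~\ref{characterization:equivalentbcp}); a maximal selection subject to the symmetric condition does not repair this, because a non-selected point need not be covered by the selected balls. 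Preiss's sufficiency proof must therefore extract differentiation from the cardinality bound by a more delicate route than the standard maximal-function argument, and that route is missing from your sketch.

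In the \emph{only if} direction, the counterexample you describe cannot contradict the Differentiation theorem. Your measure is a locally finite sum of point masses at the centers, and the oscillation of averages is arranged only at the single cluster point $p$. If $\mu(\{p\})=0$, then the bad behaviour occurs on a $\mu$-null set, which the Differentiation theorem permits; if $\mu(\{p\})>0$, then $\mu(B_d(p,r))\to\mu(\{p\})$ as $r\to 0^+$ and the averages converge to $f(p)$, so again nothing fails. To negate the theorem one must produce a locally finite Borel measure $\mu$ and $f\in L^1(\mu)$ for which differentiation fails on a set of \emph{positive} $\mu$-measure; this forces an iterated, self-similar use of the failure of w-BCP at infinitely many scales and locations so that the set of bad points is itself charged by $\mu$, which is the real content of Preiss's construction and is absent from your plan. (The preliminary step, passing from the failure of the $\sigma$-decomposition to a closed set on which w-BCP fails at every scale, is also not a routine exhaustion---it requires a transfinite kernel-type argument---but that is the lesser of the two problems.)
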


As already stressed, the fact that BCP holds in a metric space depends strongly on the distance with which the space is endowed. On the one hand, with very mild assumptions on the metric space (namely, as soon as there exists an  accumulation point), one can indeed always construct bi-Lipschitz equivalent distances as close as we want from the original distance and for which BCP is not satisfied, as shown in the following result. 

\begin{theorem} \label{thm:destroybcp} 
Let $(M,d)$ be a metric space. Assume that there exists an accumulation point in $(M,d)$. Let $0<c<1$. Then there exists a distance $\overline{d}$ on $M$ such that $c \, d \leq \overline{d} \leq d$ and for which w-BCP, and hence BCP, do not hold.
\end{theorem}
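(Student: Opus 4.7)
The plan is to construct $\overline{d}$ as the chain pseudometric induced by a symmetric weight $w \colon M \times M \to [0,\infty)$ that agrees with $d$ everywhere except on a countable collection of pairs, where it is scaled by the factor $c$. Using the given accumulation point $p_\infty$, I first select a sequence of distinct points $p_n \in M \setminus \{p_\infty\}$ with $p_n \to p_\infty$, and pass to a subsequence for which $\epsilon_n := d(p_n, p_\infty)$ satisfies $\epsilon_{n+1} < \frac{1-c}{2}\, \epsilon_n$ for every $n$. These points will serve as the centers of an infinite would-be Besicovitch family having $p_\infty$ as common point.

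Define $w(x,y) := c\, d(x,y)$ when $\{x,y\} = \{p_n, p_\infty\}$ for some $n$, and $w(x,y) := d(x,y)$ otherwise, and set
\[
\overline{d}(x,y) := \inf \Bigl\{ \sum_{i=0}^{k-1} w(a_i, a_{i+1}) : k \ge 1,\ x = a_0, a_1, \ldots, a_k = y \Bigr\}.
\]
Symmetry and the triangle inequality hold by construction, and $\overline{d} \le w \le d$ is immediate. Since $w \ge c\, d$ pointwise, every chain has $w$-sum at least $c$ times its $d$-sum, which by the triangle inequality for $d$ is in turn at least $c\, d(x,y)$; hence $\overline{d} \ge c\, d$, and so $\overline{d}$ is a genuine distance with $c\, d \le \overline{d} \le d$. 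This automatic handling of both bilipschitz bounds is what makes the chain construction preferable to the naive attempt of tampering with individual distances, which would break the triangle inequality.

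To exhibit Besicovitch families for $\overline{d}$, I split any chain from $p_n$ to $p_\infty$ or to $p_m$ at the first occurrence of $p_\infty$ (if any), noting that chains avoiding $p_\infty$ use only edges with $w$-weight equal to their $d$-weight; this yields
\[
\overline{d}(p_n, p_\infty) = c\, \epsilon_n, \qquad \overline{d}(p_n, p_m) = \min\bigl(d(p_n, p_m),\ c(\epsilon_n + \epsilon_m)\bigr) \quad (n \ne m).
\]
Set $r_n := c\, \epsilon_n$, so $p_\infty \in B_{\overline{d}}(p_n, r_n)$ for every $n$. For $n < m$, the reverse triangle inequality combined with the rapid decay of $(\epsilon_n)$ yields $d(p_n, p_m) \ge \epsilon_n - \epsilon_m > c\, \epsilon_n$, while $c(\epsilon_n + \epsilon_m) > c\, \epsilon_n$ is trivial; thus $\overline{d}(p_n, p_m) > c\, \epsilon_n = \max(r_n, r_m)$, so neither of the two centers lies in the other ball. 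Every finite subfamily of $\{B_{\overline{d}}(p_n, r_n)\}_{n \ge 1}$ is therefore a family of Besicovitch balls, and their cardinalities are unbounded; this shows w-BCP fails, and hence BCP fails as well. The only delicate point is the decay rate of $\epsilon_n$, which must be tuned just fast enough to enforce the strict separation $\overline{d}(p_n, p_m) > c\,\epsilon_n$.
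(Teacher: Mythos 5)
Your construction is correct, and it follows the same general template as the paper's proof in Section~\ref{section:destroybcp} (a chain metric induced by discounting precisely the edges joining the accumulation point to a sequence converging to it), but the way you obtain the Besicovitch separation is genuinely different and somewhat more elementary. The paper discounts the pair $\{x_n,\overline{x}\}$ by the variable factor $\tfrac{n}{n+1}$ (with $n_0$ chosen so that $\tfrac{n_0}{n_0+1}>c$), and the heart of its argument is the isolated-point statement of Lemma~\ref{lem:isolatedpoint}, namely $B_{\overline{d}}(x_n,\rho_n)\cap B_d\bigl(\overline{x},\tfrac{\rho_n}{n(n+1)}\bigr)=\{\overline{x}\}$, after which a further subsequence is extracted so that later centers fall inside these small $d$-balls; the non-containment of centers is thus proved by a somewhat delicate analysis of arbitrary chains with the slowly varying discounts. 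You instead fix the discount at the constant $c$ and transfer all the work to the choice of the sequence, imposing the geometric decay $\epsilon_{n+1}<\tfrac{1-c}{2}\,\epsilon_n$; then the chain-splitting observation (chains avoiding $p_\infty$ pay full $d$-cost, chains through $p_\infty$ pay at least $c(\epsilon_n+\epsilon_m)$) combined with the reverse triangle inequality gives $\overline{d}(p_n,p_m)\geq\min\bigl(d(p_n,p_m),\,c(\epsilon_n+\epsilon_m)\bigr)>c\,\epsilon_n=\max(r_n,r_m)$ at once, with no analogue of the isolated-point lemma and no second extraction. Note that the splitting step is essential in your argument: the crude bound $\overline{d}\geq c\,d$ alone would only give $c(\epsilon_n-\epsilon_m)$, which does not exceed $c\,\epsilon_n$, and you correctly supply the stronger estimate. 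What each approach buys: yours is shorter and yields explicit formulas for $\overline{d}$ on the relevant pairs, while the paper's version keeps the modified distances $\rho_n=\tfrac{n}{n+1}d(x_n,\overline{x})$ asymptotically undistorted (discount tending to $1$), so the perturbation is in that sense as mild as possible along the sequence; both establish the stated bi-Lipschitz bounds $c\,d\leq\overline{d}\leq d$ and the failure of w-BCP, hence of BCP.
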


A slightly different version of this result is stated in Theorem 3 of \cite{P}. For sake of completeness, we  give in Section~\ref{section:destroybcp} a construction of such a distance as stated in Theorem~\ref{thm:destroybcp}.

On the other hand, the question whether a metric space can be remetrized so that BCP holds is in general significantly more delicate. As already explained, the main result of the present paper, Theorem~\ref{thm:main}, is a positive answer to this question for the Heisenberg groups equipped with ad-hoc homogeneous distances, namely those whose unit ball at the origin coincides with an Euclidean ball with a small enough radius. 

\medskip

\subsection{The Heisenberg group} \label{subsection:heisenberg} As a set we identify the Heisenberg group $\HH^n$ with $\R^{2n+1}$ and we equip it as a topological space with the Euclidean topology. We choose the following convention for the group law 
\begin{equation} \label{e:grouplaw}
(x,y,z)\cdot(x',y',z'):=(x+x',y+y',z+z'  +\dfrac{1}{2}\langle x,y' \rangle -\dfrac{1}{2}\langle y,x'\rangle)
\end{equation}
where $x$, $y$, $x'$ and $y'$ belong to $\mathbb{R}^n$, $z$ and $z'$ belong to $\R$ and $\langle \cdot,\cdot \rangle $ denotes the usual scalar product in $\R^n$. This corresponds to a choice of exponential and homogeneous coordinates. 

The one parameter family of dilations on $\HH^n$ is given by $(\delta_\lambda)_{\lambda>0}$ where 
\begin{equation} \label{e:dilations}
\delta_\lambda (x,y,z) := (\lambda x, \lambda y, \lambda^2 z).
\end{equation}
These dilations are group automorphisms.

\begin{definition}[Homogeneous distance] \label{def:homogeneousdistance}
A distance $d$ on $\HH^n$ is said to be {\em homogeneous} if the following properties are satisfied. First, it induces the Euclidean topology on $\HH^n$. Second, it is left invariant, that is, $d(p\cdot q, p\cdot q') = d(q,q')$ for all $p$, $q$, $q' \in \HH^n$. And third, it is one-homogeneous with respect to the dilations, that is, $d(\delta_\lambda(p), \delta_\lambda(q)) = \lambda\; d(p,q)$ for all $p$, $q\in\HH^n$ and all $\lambda >0$.
\end{definition}

It turns out that homogeneous distances on $\HH^n$ do exist in abundance and make it a doubling metric space. It is also well known that any two homogeneous distances are bi-Lipschitz equivalent. See for example \cite{folland-stein} for more details about the Heisenberg groups and more generally Carnot groups.

The (family of) homogeneous distance(s) we consider in this paper can be defined in the following way. For $\alpha>0$, we denote by $B_\alpha$ the Euclidean ball in $\HH^n \simeq \R^{2n+1}$ centered at the origin with radius $\alpha$, that is, $$B_\alpha := \{(x,y,z)\in \HH^n;\; \| x\|_{\R^{n}}^2+\| y\|_{\R^{n}}^2+|z|^2 \leq \alpha^2\},$$
where $\| \cdot \|_{\R^{n}}$ denotes the Euclidean norm in $\R^{n}$ and we set 
\begin{equation} \label{e:defdistdalpha}
d_\alpha(p,q) := \inf\{r>0; \; \delta_{1/r}(p^{-1}\cdot q) \in B_\alpha \}\,.
\end{equation}

Hebisch and Sikora proved in \cite{Hebisch_Sikora} that if $\alpha>0$ is small enough, then $d_\alpha$ actually defines a distance on $\HH^n$. More generally this holds true in any Carnot group starting from the set $B_\alpha$ given by the Euclidean ball centered at the origin with radius $\alpha>0$ small enough, where one identifies in the usual way the group with some $\R^m$ where $m$ is its topological dimension. 

It then follows from the very definition that $d_\alpha$ turns out to be the homogeneous distance on $\HH^n$ for which the unit ball centered at the origin coincides with the Euclidean ball with radius $\alpha$ centered at the origin. The geometric description of arbitrary balls that can then be deduced from the unit ball centered at the origin via dilations and left-translations is actually of crucial importance for understanding the reasons why BCP eventually holds for these distances.

On the other hand, it is particularly convenient to note that in the specific case of the Heisenberg groups, one also has a fairly simple analytic expression for such distances whose unit ball at the origin is given by an Euclidean ball centered at the origin. This will actually be technically  extensively used in our proof of Theorem~\ref{thm:main}. This also gives the explicit connection with the Cygan-Kor\'anyi distance and the distances of negative type considered by Lee and Naor in \cite{lee-naor}. 

Set 
\begin{equation} \label{e:n-korany-norm}
\rho(p):=\sqrt{\| x\|_{\R^{n}}^2+\| y\|_{\R^{n}}^2} \quad \text{and} \quad \|p\|_{g,\alpha}:= \left(\rho(p)^4 + 4 \alpha^2 |z|^2\right)^{1/4}
\end{equation}
for $p=(x,y,z)\in\HH^n$. Then one has 
\begin{equation} \label{e:analyticdalpha}
d_\alpha(p,q) = \sqrt{ \dfrac{ \rho(p^{-1}\cdot q)^2 + \|p^{-1}\cdot q\|_{g,\alpha}^2}{2\alpha^2} }\,,
\end{equation}
see Section~\ref{section:preliminaries}. 

First, note that $d_\rho(p,q):= \rho(p^{-1}\cdot q)$ is a left-invariant pseudo-distance on $\HH^n$ that is one-homogeneous with respect to the dilations. Next, when $\alpha=2$, $\|\cdot\|_{g,2}$ is nothing but the Cygan-Kor\'anyi norm which is well known to be a natural gauge in $\HH^n$. It can actually be checked by direct computations that $d_{g,\alpha}(p,q):= \|p^{-1}\cdot q\|_{g,\alpha}$ satisfies the triangle inequality for any $0<\alpha\leq 2$ and hence defines a homogeneous distance on $\HH^n$. This was first proved by Cygan in \cite{cygan} when $\alpha=2$. One then recovers from the analytic expression \eqref{e:analyticdalpha} that $d_\alpha$ actually defines a homogeneous distance on $\HH^n$ for any $0<\alpha\leq 2$, giving also an explicit range of values of $\alpha$ in $\HH^n$ for which this fact holds and was first observed in \cite{Hebisch_Sikora} for general Carnot groups and for small enough values of $\alpha$. 

\begin{theorem} For any $0<\alpha \leq 2$, $d_\alpha$ defines a homogeneous distance on $\HH^n$. 
\end{theorem}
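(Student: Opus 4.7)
The strategy, as outlined in the paragraph preceding the statement, is threefold: first verify the analytic formula \eqref{e:analyticdalpha}; second prove the triangle inequality for $\|\cdot\|_{g,\alpha}$ on $\HH^n$; and third deduce the triangle inequality for $d_\alpha$ from \eqref{e:analyticdalpha}. The remaining axioms of a homogeneous distance are routine.

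Step one is a direct algebraic computation. If $p^{-1}\cdot q = (X,Y,Z)$, then the condition $\delta_{1/r}(p^{-1}\cdot q) \in B_\alpha$ reads $\alpha^2 r^4 - \rho(p^{-1}\cdot q)^2 r^2 - Z^2 \geq 0$, and the smallest admissible $r>0$ is the positive root of the corresponding equality, namely $r^2 = (\rho^2 + \sqrt{\rho^4 + 4\alpha^2 Z^2})/(2\alpha^2)$, which is exactly \eqref{e:analyticdalpha}.

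The heart of the proof is step two. Writing $p = (x,y,z)$, $q = (x',y',z')$, set
\[
A = \rho(p)^2,\quad B = \rho(q)^2,\quad c = \langle x,x'\rangle + \langle y,y'\rangle,\quad d = \langle x,y'\rangle - \langle y,x'\rangle.
\]
The identity $\langle x+iy,\,x'-iy'\rangle_{\C^n} = c + id$ and the complex Cauchy--Schwarz inequality yield $c^2 + d^2 \leq AB$. Encode $p$ by the complex number $P := A + 2i\alpha z$ (so $|P| = \|p\|_{g,\alpha}^2$) and similarly $Q$ for $q$. A direct computation based on \eqref{e:grouplaw} shows
\[
\|p\cdot q\|_{g,\alpha}^2 = |P + Q + 2W|, \qquad W := c + i(\alpha/2)d.
\]
The restriction $\alpha \leq 2$ enters precisely via $|W|^2 = c^2 + (\alpha^2/4) d^2 \leq c^2 + d^2 \leq AB \leq |P||Q|$, whence
\[
|P+Q+2W| \leq |P|+|Q|+2|W| \leq |P|+|Q|+2\sqrt{|P||Q|} = \bigl(|P|^{1/2}+|Q|^{1/2}\bigr)^2,
\]
which unpacks to $\|p \cdot q\|_{g,\alpha} \leq \|p\|_{g,\alpha} + \|q\|_{g,\alpha}$. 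Combined with the obvious $\|p^{-1}\|_{g,\alpha} = \|p\|_{g,\alpha}$ and positive-definiteness, this shows that $d_{g,\alpha}(p,q) := \|p^{-1}\cdot q\|_{g,\alpha}$ is a homogeneous distance for $0 < \alpha \leq 2$.

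For step three, note that $\rho$ is also sub-additive on the group: $\rho(p\cdot q)^2 = A + B + 2c \leq A + B + 2\sqrt{AB} = (\rho(p)+\rho(q))^2$ by Cauchy--Schwarz. Since both $\rho$ and $\|\cdot\|_{g,\alpha}$ are sub-additive on $\HH^n$, Minkowski's inequality in $\ell^2$ gives sub-additivity of the function $(\rho(\cdot)^2 + \|\cdot\|_{g,\alpha}^2)^{1/2}$, and dividing by $\sqrt{2}\,\alpha$ yields the triangle inequality for $d_\alpha$ via \eqref{e:analyticdalpha}. Left-invariance and one-homogeneity of $d_\alpha$ are immediate from \eqref{e:defdistdalpha}; non-degeneracy follows from that of $\|\cdot\|_{g,\alpha}$; and the compatibility with the Euclidean topology is visible from \eqref{e:analyticdalpha}. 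The main obstacle is of course step two, and the bound $\alpha \leq 2$ is tight for this argument since it is used exactly once, in the estimate $c^2 + (\alpha^2/4) d^2 \leq c^2 + d^2$.
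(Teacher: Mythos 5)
Your proof is correct and follows essentially the same route the paper indicates: express $d_\alpha$ via \eqref{e:analyticdalpha} as the (normalized) $\ell^2$-combination of the pseudo-distance $d_\rho$ and the generalized gauge $d_{g,\alpha}$, verify subadditivity of $\|\cdot\|_{g,\alpha}$ for $0<\alpha\leq 2$ (the ``direct computation'' the paper attributes to Cygan's argument, which your complexification carries out explicitly), and combine by Minkowski. The only quibble is a harmless convention slip in the Hermitian pairing (one gets $c-id$ rather than $c+id$ with the standard conjugation), which does not affect the estimate $c^2+d^2\leq AB$.
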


Note that there might be other values of $\alpha>2$ such that $d_\alpha$ defines a homogeneous distance on $\HH^n$. 

These distances turn out to be those considered by Lee and Naor in \cite{lee-naor}. The authors actually proved in \cite{lee-naor} that $d_2$ is of negative type in $\HH^n$ to provide a counterexample to the so-called Goemans-Linial conjecture. Let us mention that it can easily be checked that the proof in \cite{lee-naor} extend to the distances $d_\alpha$ for all $0<\alpha \leq 2$.

Let us now state our main result.

\begin{theorem} \label{thm:main}
Let $\alpha>0$ be such that $d_\alpha$ defines a homogeneous distance on $\HH^n$. Then BCP holds for the homogeneous distance $d_\alpha$ on $\HH^n$.
\end{theorem}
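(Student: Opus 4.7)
The plan is to reduce, via Characterization~\ref{characterization:equivalentbcp} and the fact that $(\HH^n, d_\alpha)$ is a doubling metric space, to establishing the Weak Besicovitch Covering Property, that is, a uniform bound $N = N(n,\alpha)$ on the cardinality of any family of Besicovitch balls $\mathcal{B} = \{B_{d_\alpha}(x_i, r_i)\}_{i=1}^N$. Using left-invariance and the homogeneity under $\delta_\lambda$, we normalize so that $0 \in \bigcap_i B_{d_\alpha}(x_i, r_i)$ and $\max_i r_i = 1$. Since $B_{d_\alpha}(0,r) = \delta_r(B_\alpha)$, since $B_\alpha$ is centrally symmetric in the Euclidean sense, and since inversion in $\HH^n$ is $p \mapsto -p$, the defining conditions of a family of Besicovitch balls become the Euclidean-geometric statements
\[
x_i \in \delta_{r_i}(B_\alpha), \qquad x_j^{-1}\cdot x_i \notin \delta_{r_j}(B_\alpha) \text{ for } i \neq j.
\]
Via the explicit formula~\eqref{e:analyticdalpha} these can be translated into coupled inequalities involving $\rho(x_j^{-1}\cdot x_i)$ and $\|x_j^{-1}\cdot x_i\|_{g,\alpha}$.

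The heart of the argument is to exploit the Euclidean convexity and roundness of $B_\alpha$ to produce, for each pair $i \neq j$, a uniform angular-type separation between $x_i$ and $x_j$ as seen from the common point $0$. This is the Heisenberg analog of the classical Euclidean Besicovitch argument: in $\R^m$ one observes that if two balls of radii $r_i, r_j$ both contain $0$ and mutually exclude each other's centers, then the angle at $0$ between those centers is bounded below by a universal constant; here one must run the same idea for $d_\alpha$-balls. The fact that the unit ball at the origin is an actual Euclidean ball, rather than a polytope or a Cygan--Kor\'anyi-type gauge ball with its characteristic corners along the vertical axis, is precisely what enables such an angular estimate, uniform in the ratios $r_i/r_j$, to survive the coupling introduced by the group law.

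Once a uniform angular/cone separation has been obtained, a volume/packing argument together with the doubling property of $(\HH^n, d_\alpha)$ yields the desired bound on $N$. The main obstacle is controlling pairs $(i,j)$ for which the radii $r_i$ and $r_j$ are very different: the dilation $\delta_r$ is quadratic in the vertical coordinate, while the non-abelian correction $\tfrac{1}{2}\langle x, y' \rangle - \tfrac{1}{2}\langle y, x'\rangle$ in the group law~\eqref{e:grouplaw} couples horizontal and vertical components nontrivially, so the separation condition $x_j^{-1} \cdot x_i \notin \delta_{r_j}(B_\alpha)$ mixes terms of very different orders when $r_i \ll r_j$. Showing, through the analytic description~\eqref{e:analyticdalpha}, that this cross-coupling does not destroy the Euclidean angular separation across widely differing scales is where the proof will be most delicate; this is also the step where the hypothesis that $B_\alpha$ is a genuine Euclidean (hence uniformly convex) ball is used in an essential way.
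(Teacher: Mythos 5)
Your reduction to w-BCP and the normalization at a common point are fine and match the paper's starting point, but the core of your plan rests on an unproved claim that is exactly the crux of the matter: that for two $d_\alpha$-balls containing $0$ whose centers mutually exclude each other, the \emph{Euclidean} angle at $0$ between the centers is bounded below uniformly in the ratio of the radii. You do not derive this from \eqref{e:analyticdalpha}, and it does not follow from ``Euclidean convexity and roundness'' of the unit ball alone. The balls $B(p,r)=p\cdot\delta_r(B_\alpha)$ are affine images of a Euclidean ball, hence ellipsoids of \emph{unbounded} eccentricity (the dilation is quadratic in $z$ and the translation shears), so the classical Euclidean angle estimate does not transfer. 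Worse, convexity and central symmetry of the unit ball at the origin cannot suffice for any such pairwise separation: the Cygan--Kor\'anyi ball is also convex and symmetric, yet BCP fails for $d_{g,2}$ (and Theorem~\ref{thm:nobcpingoingcorners} shows failure whenever the sphere is second-order flat or has an inward corner at the poles). Any correct argument must therefore exploit the strict second-order roundness of $B_\alpha$ at the poles quantitatively; you flag this as ``the most delicate step'' but supply no estimate, so the proposal is a programme rather than a proof.

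It is also worth noting that the mechanism in the paper is genuinely different from the one you propose, and suggests that a uniform pairwise angular separation at the common point is not the right invariant. After reducing (Lemma~\ref{lemma:reduction}, via the reflection and a pigeonhole over rotations) to centers with $z_j\le 0$ lying in one thin cone $\mathcal{C}(\theta)$ about the positive $x$-axis, the paper does \emph{not} separate centers angularly: arbitrarily many centers may share the same thin horizontal sector, but Lemma~\ref{lemma:comparisonincone} forces their scales apart --- if $\rho_q\le\rho_p$ then $\rho_q<\rho_p\cos(2\theta)$ and $z_q<2z_p$, i.e.\ geometric decay of the horizontal radius and doubling of the depth. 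The count is then closed off by two ``forbidden region'' lemmas (Lemmas~\ref{lemma:x:axis0} and~\ref{lemma:z:axis0}): after a dilation normalizing the smallest radius, at most one center can lie far out along the $x$-axis region $\PP(a,b,\theta)$ and at most one in the cylinder $\TT(a,b)$ along the negative $z$-axis, so the decay chains \eqref{e:c5}--\eqref{e:c6} can only run for logarithmically many steps. No volume or packing argument is used. If you want to salvage your approach, you would need either to prove the claimed angle bound for these sheared anisotropic ellipsoids across all radius ratios (isolating where the positive curvature at the poles enters), or to replace ``angular separation'' by a scale-separation statement of the type the paper actually proves.
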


For technical and notational simplicity, we will focus our attention on the first Heisenberg group $\HH=\HH^1$. We shall point out briefly in Section~\ref{section:Hn} the non-essential modifications needed to make our arguments work in any Heisenberg group $\HH^n$. 

The rest of the paper is organized as follows. In Section~\ref{section:preliminaries} we fix some conventions about $\HH$ and the distance $d_\alpha$ and state three technical lemmas on which the proof of Theorem~\ref{thm:main} is based. The proof of these lemmas is given in 
Sections \ref{section:prooflemma:axis} and
 \ref{section:prooflemma:comparisonincone}. Section~\ref{section:proofmainthm} is devoted to the proof of Theorem~\ref{thm:main} itself. In Section~\ref{section:distancesw/obcp} we prove two criteria, Theorem~\ref{thm:nobcpingoingcorners} and Theorem~\ref{thm:nobcpoutgoingcorners}, for homogeneous distances on $\HH$ that imply that BCP does not hold. 
Theorem~\ref{thm:destroybcp} is proved in Section~\ref{section:destroybcp}.

\section{Preliminary results} \label{section:preliminaries}

As already stressed we will focus our attention in Sections \ref{section:preliminaries} to \ref{section:distancesw/obcp} on the first Heisenberg group $\HH=\HH^1$ for technical and notational simplicity. The modifications needed to handle the case of $\HH^n$ for any $n\geq 1$ will be indicated in Section~\ref{section:Hn}.

We first fix some conventions and notations. Next, we will conclude this section with the statement of the main lemmas on which the proof of Theorem~\ref{thm:main} will be based.

Recall that we identify the Heisenberg group $\HH$ with $\R^3$ equipped with the group law given in \eqref{e:grouplaw} and we equip it with the Euclidean topology.

We define the projection $\pi: \HH \rightarrow \R^2$ by 
\begin{equation} \label{e:projection}
\pi(x,y,z) := (x,y).
\end{equation}

When considering a specific point $p\in\HH$, we shall usually denote by $(x_p,y_p,z_p)$ its coordinates and we set
\begin{equation} \label{e:rhop}
\rho_p := \sqrt{x_p^2+ y_p^2}\,. 
\end{equation}

From now on in this section, as well as in Sections \ref{section:proofmainthm}, \ref{section:prooflemma:axis} and \ref{section:prooflemma:comparisonincone}, we fix some $\al>0$ such that $d_\al$ as given in \eqref{e:defdistdalpha} defines a homogeneous distance on $\HH$. Thus all metric notions and properties will be understood in these sections relatively to this fixed distance $d_\al$. In particular we shall denote the closed balls with center $p\in\HH$ and radius $r>0$ by $B(p,r)$ without further explicit reference to the distance $d_\al$ with respect to which they are defined.

Remembering \eqref{e:defdistdalpha}, we have the following properties.

\begin{proposition}
For $p=(x_p,y_p,z_p)\in \HH$, we have
\begin{equation} \label{formula:norm0}
d_\alpha(0,p)\leq r  \iff 
\dfrac{\rho_p^2}{r^2}+\dfrac{z_p^2}{r^4}\leq\alpha^2
 \end{equation}
 and
\begin{equation} \label{e:norm0'}
d_\alpha(0,p) = r  \iff 
\dfrac{\rho_p^2}{r^2}+\dfrac{z_p^2}{r^4} = \alpha^2
\end{equation}
from which we get 
\begin{equation} \label{formula:norm1}
d_\alpha(0,p) = \sqrt{ \dfrac{ \rho_p^2 +\sqrt{\rho_p^4 + 4 \alpha^2 z_p^2} }{2\alpha^2} }~~.
\end{equation}
\end{proposition}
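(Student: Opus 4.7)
The proof is a direct unpacking of the definitions, followed by solving a quadratic equation in $r^{2}$. There is no real obstacle here; everything reduces to calculation.

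First, I would compute $\delta_{1/r}(p)$ explicitly. By \eqref{e:dilations}, for $p=(x_{p},y_{p},z_{p})$ and $r>0$ we have $\delta_{1/r}(p)=(x_{p}/r,\,y_{p}/r,\,z_{p}/r^{2})$. Plugging this into the definition of $B_{\alpha}$ gives
\[
\delta_{1/r}(p)\in B_{\alpha} \iff \frac{x_{p}^{2}+y_{p}^{2}}{r^{2}}+\frac{z_{p}^{2}}{r^{4}}\le\alpha^{2} \iff \frac{\rho_{p}^{2}}{r^{2}}+\frac{z_{p}^{2}}{r^{4}}\le\alpha^{2},
\]
using the notation $\rho_{p}=\sqrt{x_{p}^{2}+y_{p}^{2}}$ from \eqref{e:rhop}.

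Next I would record that the map
\[
\varphi(r):=\frac{\rho_{p}^{2}}{r^{2}}+\frac{z_{p}^{2}}{r^{4}}
\]
is continuous and strictly decreasing on $(0,\infty)$ whenever $p\neq 0$ (both summands are decreasing, and at least one is positive), with $\lim_{r\to 0^{+}}\varphi(r)=+\infty$ and $\lim_{r\to\infty}\varphi(r)=0$. Hence $\{r>0:\varphi(r)\le\alpha^{2}\}$ is a closed half-line $[r^{*},\infty)$, where $r^{*}>0$ is the unique solution of $\varphi(r^{*})=\alpha^{2}$. By the defining formula \eqref{e:defdistdalpha},
\[
d_{\alpha}(0,p)=\inf\{r>0:\delta_{1/r}(p)\in B_{\alpha}\}=r^{*},
\]
which is \eqref{formula:norm0} and \eqref{e:norm0'}. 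The degenerate case $p=0$ is trivial since then $\varphi\equiv 0$ and $d_{\alpha}(0,0)=0$, in agreement with both equivalences.

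Finally, to derive the closed-form expression \eqref{formula:norm1}, I would rewrite the equation $\varphi(r)=\alpha^{2}$ as
\[
\alpha^{2}r^{4}-\rho_{p}^{2}\,r^{2}-z_{p}^{2}=0,
\]
a quadratic in $R=r^{2}$. The discriminant is $\rho_{p}^{4}+4\alpha^{2}z_{p}^{2}\ge 0$, and selecting the positive root (the negative root would yield $r^{2}<0$) gives
\[
r^{2}=\frac{\rho_{p}^{2}+\sqrt{\rho_{p}^{4}+4\alpha^{2}z_{p}^{2}}}{2\alpha^{2}}.
\]
Taking square roots yields \eqref{formula:norm1}. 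Throughout, all steps are elementary algebraic manipulations; no nontrivial geometric or group-theoretic input is needed beyond the explicit form of the dilation \eqref{e:dilations} and of the Euclidean ball $B_{\alpha}$.
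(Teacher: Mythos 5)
Your proof is correct and follows exactly the route the paper intends: the Proposition is stated there as an immediate consequence of the definition \eqref{e:defdistdalpha}, and your unpacking of $\delta_{1/r}(p)\in B_\alpha$, the monotonicity of $r\mapsto \rho_p^2/r^2+z_p^2/r^4$, and the quadratic in $r^2$ is precisely the (omitted) verification. Only a cosmetic remark: when $z_p=0$ the discarded root gives $r^2=0$ rather than $r^2<0$, but it is still inadmissible, so nothing changes.
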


For a point $p\in\HH$, we shall set  
\begin{equation} \label{e:rp}
r_p := d_\al(0,p)\, .
\end{equation}

Using left-translations, we have the following properties for any two points $p,q\in\HH$,
\begin{equation} \label{e:dalpha1}
d_\alpha(p,q)\leq r  \iff 
\dfrac{\rho_{p^{-1}\cdot q}^2}{r^2}+\dfrac{z_{p^{-1}\cdot q}^2}{r^4}\leq\alpha^2
\end{equation}
and
\begin{equation} \label{e:dalpha2}
d_\alpha(p,q) = \sqrt{ \dfrac{ \rho_{p^{-1}\cdot q}^2 +\sqrt{\rho_{p^{-1}\cdot q}^4 + 4 \;\alpha^2\; z_{p^{-1}\cdot q}^2} }{2\alpha^2} }
\end{equation}
where $$\rho_{p^{-1}\cdot q} = \sqrt{(x_q - x_p)^2+ (y_q -y_p)^2}$$ and $$z_{p^{-1}\cdot q} = z_q -z_p - \dfrac{x_p y_q - y_p x_q}{2}$$ by definition of the group law \eqref{e:grouplaw}. Note that if $p=(x_p,y_p,z_p)\in\HH$ then $p^{-1} = (-x_p,-y_p,-z_p)$.

Let us point out that balls in $(\HH,d_\al)$ are convex in the Euclidean sense when identifying $\HH$ with $\R^3$ with our choosen coordinates. Indeed, the unit ball centered at the origin is by definition the Euclidean ball with radius $\alpha$ in $\HH\simeq\R^3$ and thus is Euclidean convex. Next, dilations \eqref{e:dilations} are linear maps and left-translations (see \eqref{e:grouplaw}) are affine maps, hence 
$$B(p,r)= p \cdot \delta_r(B(0,1))$$
is also an Eucliden convex set in $\HH\simeq\R^3$. This will be of crucial use for some of our arguments in the sequel and we state it below as a proposition for further reference.

\begin{proposition} \label{prop:ball_convex}
Balls in $(\HH,d_\al)$ are convex in the Euclidean sense when identifying $\HH$ with $\R^3$ with our choosen coordinates.
\end{proposition}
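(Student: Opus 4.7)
The plan is to verify the three claims sketched in the paragraph preceding the proposition and then compose them, so the argument is essentially a verification that each of the ingredients does what it is claimed to do.

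First I would observe that the unit ball at the origin is, by the defining relation \eqref{e:defdistdalpha} together with $\delta_1 = \operatorname{id}$, exactly the set $B_\alpha = \{(x,y,z):\ x^2+y^2+z^2 \leq \alpha^2\}$, which is a Euclidean closed ball in $\R^3$ and therefore Euclidean convex. Next, using left-invariance and one-homogeneity (Definition~\ref{def:homogeneousdistance}), I would write a general ball as
\begin{equation*}
B(p,r) \;=\; p \cdot \delta_r(B(0,1)),
\end{equation*}
so it remains to check that both $\delta_r$ and left-translation by $p$ preserve Euclidean convexity on $\R^3$.

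For the dilation, formula \eqref{e:dilations} shows that $\delta_r$ is the linear map $(x,y,z)\mapsto (rx,ry,r^2z)$, hence it sends convex sets to convex sets. For the left-translation, I would plug into the group law \eqref{e:grouplaw}: for fixed $p=(x_p,y_p,z_p)$, the map $q\mapsto p\cdot q$ has components $(x_p+x_q,\ y_p+y_q,\ z_p+z_q + \tfrac12(x_p y_q - y_p x_q))$, each of which is an affine function of the coordinates $(x_q,y_q,z_q)$. Thus $q \mapsto p\cdot q$ is an affine map $\R^3\to\R^3$, and hence also preserves Euclidean convexity.

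Composing, $B(p,r)$ is the image of the Euclidean convex set $B_\alpha$ under the affine map $q \mapsto p\cdot \delta_r(q)$, which gives the conclusion. There is really no obstacle here: the only thing one needs to notice explicitly is that the $z$-component of $p\cdot q$, despite being built out of a quadratic-looking expression $\tfrac12\langle x,y'\rangle - \tfrac12\langle y,x'\rangle$, is affine in $q$ when $p$ is kept fixed, and this is immediate from \eqref{e:grouplaw}.
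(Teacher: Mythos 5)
Your proposal is correct and follows exactly the paper's argument: the unit ball at the origin is the Euclidean ball $B_\alpha$, and a general ball $B(p,r)=p\cdot\delta_r(B(0,1))$ is its image under the linear map $\delta_r$ followed by the affine left-translation, hence Euclidean convex. The only difference is that you spell out the affineness of $q\mapsto p\cdot q$ explicitly, which the paper leaves as an immediate consequence of \eqref{e:grouplaw}.
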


We shall also use the following isometries of $(\HH,d_\al)$. First, rotations  around the $z$-axis are defined by
\begin{equation} \label{e:rotations}
\operatorname{R}_\theta : (x,y,z) \mapsto (x \cos\th - y\sin\th,x\sin\th+y\cos\th,z)
\end{equation}
for some angle $\th\in\R$. Next, the reflection $\operatorname{R}$ is defined by 
\begin{equation} \label{e:reflection}
\operatorname{R} (x,y,z) := (x,-y,-z).
\end{equation}
Using \eqref{e:dalpha2}, one can easily check that these maps are isometries of $(\HH,d_\al)$.

\medskip

We state now the main lemmas on which the proof of Theorem~\ref{thm:main} will be based.

For $\th\in(0,\pi/2)$, $a>0$ and $b>0$, we set (see  Figure \ref{fig1})
\begin{equation} \label{e:defP} 
\PP(a,b,\theta):=\{p\in \HH; \; x_p>a, \; |z_p|<b, \; |y_p|<x_p \tan \theta \}.
\end{equation}

   \begin{figure}
\centering
\subfigure
{
    \label{fig:P}
 \includegraphics[height=6cm]{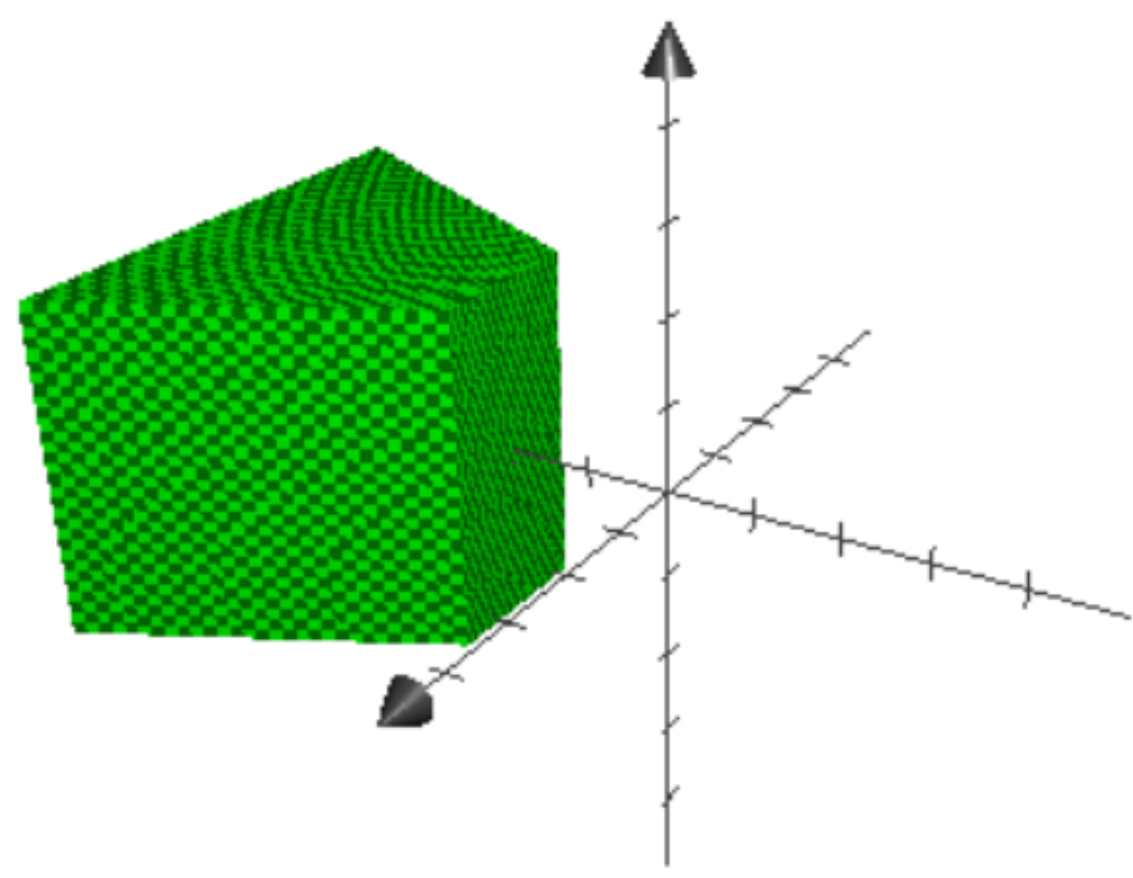}
}
\subfigure
{
    \label{fig:C_theta}
\includegraphics[height=6cm]{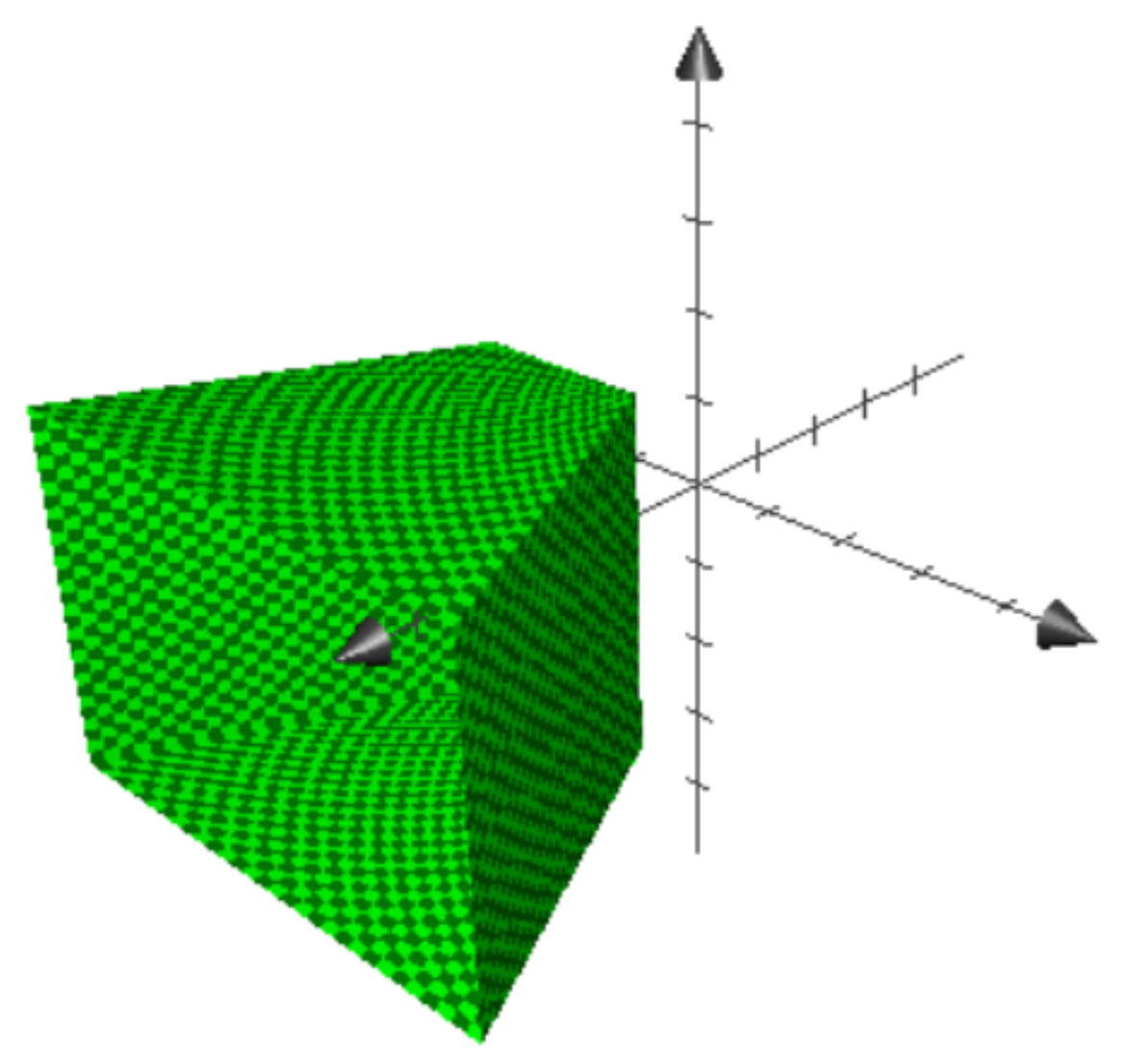}
}
\caption{Two views of the region $\mathcal P(a, b,\theta)$.}
\label{fig1}
\end{figure}

\begin{lemma} \label{lemma:x:axis0}
There exists $\theta_0\in (0,\pi/4)$, which depends only on $\al$, such that for all $\th\in (0,\th_0)$, there exists $a_0(\theta)\geq 1$ such that for all $a>a_0(\theta)$ and for all $b\in (0,1)$, the following holds. Let $p\in \HH$ and $q\in \HH$ be such that $p\notin  B(q, r_q)$ and $q\notin B(p, r_p)$. Then at most one of these two points belongs to  $\PP(a,b, \theta)$.
\end{lemma}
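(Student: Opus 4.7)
\textit{Reduction.} I would argue by contradiction: suppose both $p,q \in \PP(a,b,\theta)$ satisfy $p \notin B(q,r_q)$ and $q \notin B(p,r_p)$, which together imply $d_\al(p,q) > \max(r_p,r_q)$. Since the hypothesis is symmetric in $p,q$, assume $r_p \leq r_q$, so it is enough to prove $p \in B(q,r_q)$. By \eqref{e:dalpha1} combined with the elementary bound $r_q^2 \geq \rho_q^2/\al^2$, which follows from \eqref{formula:norm1} and $\sqrt{\rho_q^4 + 4\al^2 z_q^2} \geq \rho_q^2$, this reduces to
\[
\al^2\, z_{p^{-1}\cdot q}^2 \;\leq\; \rho_q^2\, \bigl(\rho_q^2 - \rho_{p^{-1}\cdot q}^2\bigr). \qquad (\diamond)
\]

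\textit{Explicit estimates.} I would parametrize $(x_p,y_p) = \rho_p(\cos\phi_p, \sin\phi_p)$ and $(x_q,y_q) = \rho_q(\cos\phi_q, \sin\phi_q)$; since $\theta < \pi/4$ and $p,q \in \PP(a,b,\theta)$, we have $|\phi_p|,|\phi_q| < \theta$, so $\psi := \phi_q - \phi_p$ satisfies $|\psi| < 2\theta$. Direct computation from \eqref{e:grouplaw} yields
\[
\rho_q^2 - \rho_{p^{-1}\cdot q}^2 = \rho_p(2\rho_q\cos\psi - \rho_p), \qquad z_{p^{-1}\cdot q} = (z_q - z_p) - \tfrac12 \rho_p\rho_q\sin\psi.
\]
Combining $r_p \leq r_q$, \eqref{formula:norm1}, and the bound $\sqrt{\rho^4 + 4\al^2 z^2} \leq \rho^2 + 2\al|z|$ on the right-hand side together with $\sqrt{\rho^4 + 4\al^2 z^2} \geq \rho^2$ on the left gives $\rho_p^2 \leq \rho_q^2 + \al b$, so $\rho_p \leq \eta(a)\,\rho_q$ with $\eta(a) := \sqrt{1 + \al/a^2}$ (using $b<1$ and $\rho_q > a$); in particular $\rho_p^2 \leq \eta(a)\,\rho_p\rho_q$. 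Using $(A+B)^2 \leq 2A^2 + 2B^2$ on $z_{p^{-1}\cdot q}$ yields $z_{p^{-1}\cdot q}^2 \leq 8b^2 + \tfrac12 \rho_p^2\rho_q^2\sin^2\psi$. Substituting these estimates into $(\diamond)$ and dividing by $\rho_q^2$, it suffices to prove
\[
\rho_p\rho_q\,\bigl(2\cos\psi - \eta(a)\bigl(1 + \tfrac{\al^2}{2}\sin^2\psi\bigr)\bigr) \;\geq\; \frac{8\al^2 b^2}{\rho_q^2}. \qquad (\diamond')
\]

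\textit{Choice of constants.} I would first fix $\theta_0 \in (0,\pi/4)$, depending only on $\al$, small enough that $F(\psi) := 2\cos\psi - 1 - \tfrac{\al^2}{2}\sin^2\psi > \tfrac12$ for all $|\psi|\leq 2\theta_0$; this is possible since $F$ is continuous and $F(0)=1$. For any $\theta \in (0,\theta_0)$, I would then take $a_0(\theta) \geq 1$ large enough that $\eta(a_0(\theta))$ is so close to $1$ that the bracketed factor in $(\diamond')$ stays $\geq \tfrac14$ for all $|\psi|\leq 2\theta$, and simultaneously $a_0(\theta)^4 \geq 32\al^2$. Then for $a > a_0(\theta)$ and $b<1$, using $\rho_p,\rho_q > a$ bounds the left-hand side of $(\diamond')$ below by $a^2/4 \geq 8\al^2/a^2 \geq 8\al^2 b^2/\rho_q^2$, proving $(\diamond')$ and hence $p \in B(q,r_q)$, the desired contradiction. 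The main technical obstacle is the Heisenberg twist $\tfrac12(x_py_q - y_px_q) = \tfrac12 \rho_p\rho_q \sin\psi$ appearing in $z_{p^{-1}\cdot q}$: in absolute value it has the same order $\rho_p\rho_q\theta$ as the leading contribution $2\rho_p\rho_q\cos\psi$ to $\rho_q^2 - \rho_{p^{-1}\cdot q}^2$; only its square contributes to $(\diamond')$, with coefficient $\al^2\sin^2\psi/2$, which is subordinate to $1$ precisely when $\theta$ is small compared to $1/\al$, explaining why $\theta_0$ depends on $\al$.
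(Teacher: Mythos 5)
Your proof is correct, but it is organized quite differently from the paper's. The paper proceeds through a stronger containment statement: after establishing the comparability $c_1x_p\le r_p\le c_2x_p$ on $\PP(a,b,\theta)$ (Lemma~\ref{lemma:PP}), it shows in Lemma~\ref{lemma:x:axis} that the whole cross-section $\RR(t,b,\theta)$ lies in $B(p,r_p)$ for every $t\in[1,x_p]$, by verifying the sign of the quantity $A_p(q)$ of Lemma~\ref{A(p,q)} only at the extreme slices $t=1$ and $t=x_p$ (via asymptotics in $x_p$) and then invoking the Euclidean convexity of the balls (Proposition~\ref{prop:ball_convex}); the lemma follows by ordering the two points by their $x$-coordinate. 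You instead order by radius, assume $r_p\le r_q$, and prove directly that $p\in B(q,r_q)$: you discard the $z_q^2/r_q^4$ term in \eqref{e:norm0'} and use $r_q^2\ge\rho_q^2/\al^2$ to reduce membership to the single algebraic inequality $(\diamond)$, which you then verify in polar coordinates, using $r_p\le r_q$ (in place of the paper's Lemma~\ref{lemma:PP}) to get $\rho_p^2\le\rho_q^2+\al b\le\eta(a)\rho_p\rho_q$. The underlying mechanism is the same in both arguments --- the cross term $2\rho_p\rho_q\cos\psi$ must dominate $\rho_p^2$ plus $\al^2$ times the square of the Heisenberg twist $\tfrac12\rho_p\rho_q\sin\psi$, which forces $\theta$ small compared with $1/\al$ --- but your route avoids the convexity argument and the slab-containment lemma, is fully self-contained with explicit constants, and in fact yields $a_0$ depending only on $\al$ (slightly stronger than the statement); what it does not give is the stronger geometric fact that $B(p,r_p)$ swallows entire quadrilateral cross-sections, which is the structural template the paper reuses for the $z$-axis region in Lemma~\ref{lemma:z:axis}. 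One purely expository point: when testing $p\in B(q,r_q)$ you should say you are using $\rho_{q^{-1}\cdot p}=\rho_{p^{-1}\cdot q}$ and $z_{q^{-1}\cdot p}^2=z_{p^{-1}\cdot q}^2$, which is immediate since $(p^{-1}\cdot q)^{-1}=q^{-1}\cdot p$; this is harmless but worth a line.
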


For $a>0$ and $b>0$, we set (see  Figure \ref{fig2}(a))
\begin{equation} \label{e:defT}
\TT(a,b):=\{p\in \HH ; \; z_p<-a, \;\rho_p<b\}.
\end{equation}

   \begin{figure}
\centering
\subfigure[The truncated cylinder $\mathcal T (a,b)$] 
{
    \label{fig:T}
 \includegraphics[height=7cm]{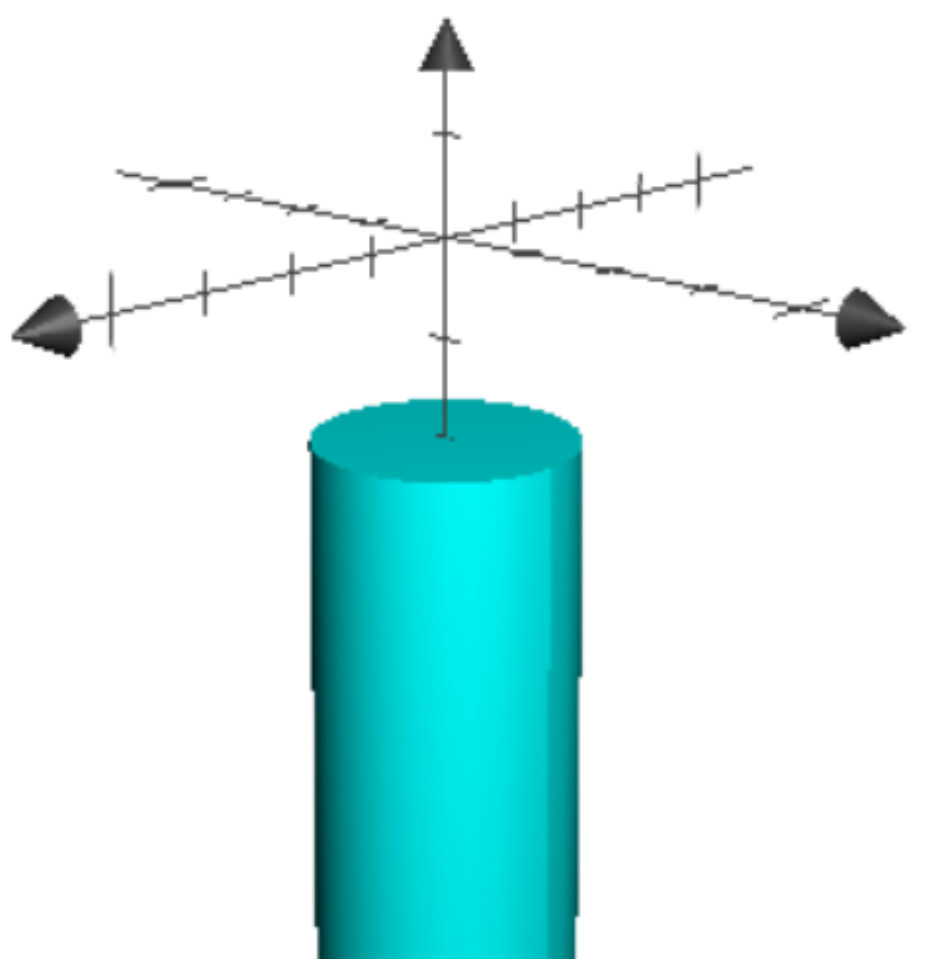}
}
\subfigure[The conic sector $\mathcal C(\theta)$.] 
{
    \label{fig:C}
\includegraphics[height=6cm]{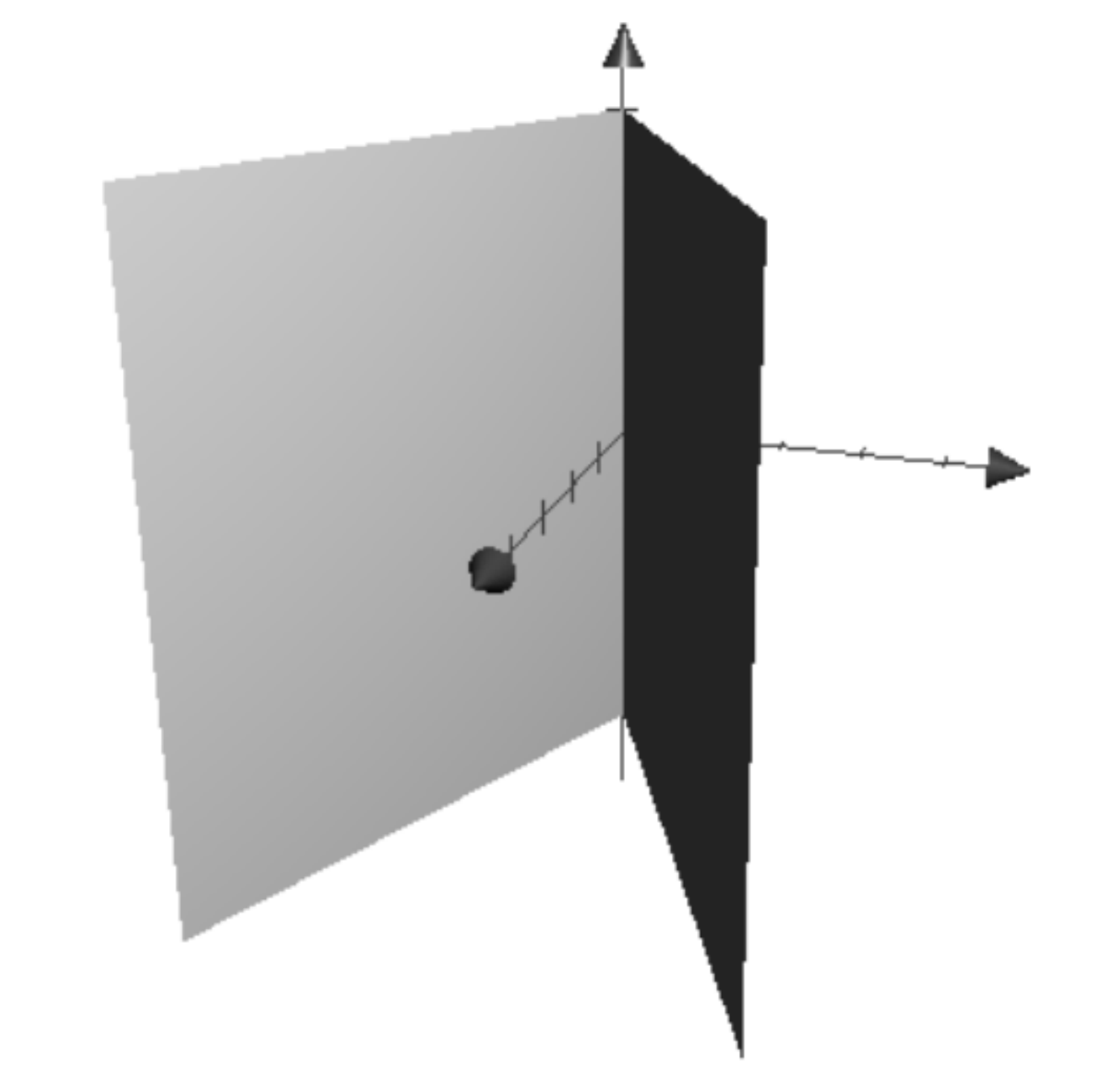}
}
\caption{The regions $\mathcal T (a,b)$ and  $\mathcal C(\theta)$.}
\label{fig2}
\end{figure} 
\begin{lemma} \label{lemma:z:axis0}
There exists $a_1 \geq 1$ and $b_1 \in (0,1)$, depending only on $\al$, such that for all $a>a_1$ and all $b\in (0,b_1)$, the following holds. Let $p\in \HH$ and $q\in \HH$ be such that $p\notin  B(q, r_q)$ and $q\notin B(p, r_p)$. Then at most one of these two points belongs to  $\TT(a,b)$.
\end{lemma}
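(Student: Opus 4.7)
The plan is to argue by contradiction. Suppose both $p,q\in \TT(a,b)$ satisfy $d_\alpha(p,q)>r_p$ and $d_\alpha(p,q)>r_q$. Since the hypothesis is symmetric in $p$ and $q$, I may assume without loss of generality that $|z_p|\geq |z_q|$; writing $A=|z_p|$ and $B=|z_q|$, I have $A\geq B>a$. The goal will be to derive a contradiction by showing that in fact $d_\alpha(p,q)\leq r_p$. The heuristic behind this is the limiting case $\rho_p=\rho_q=0$, in which both points lie on the negative $z$-axis: formula \eqref{formula:norm1} then gives $d_\alpha(p,q)^2=(A-B)/\alpha$ while $r_p^2=A/\alpha$, so the inequality is automatic. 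The task reduces to showing robustness of this inequality under a horizontal perturbation of size at most $b$, provided $a$ is large and $b$ is small.

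To carry this out, I first reformulate the inequality. By \eqref{e:dalpha1}, $d_\alpha(p,q)\leq r_p$ is equivalent to $\alpha^2 r_p^4\geq \rho_{p^{-1}\cdot q}^2\, r_p^2+z_{p^{-1}\cdot q}^2$; substituting the identity $\alpha^2 r_p^4=\rho_p^2 r_p^2+z_p^2$ obtained from \eqref{e:norm0'} reduces this to
\[
(\rho_p^2-\rho_{p^{-1}\cdot q}^2)\, r_p^2+(z_p^2-z_{p^{-1}\cdot q}^2)\geq 0.
\]
The hypotheses $\rho_p,\rho_q<b$ and $A,B>a$ then yield three explicit estimates: (i) $\rho_{p^{-1}\cdot q}^2\leq 2(\rho_p^2+\rho_q^2)\leq 4b^2$, whence $\rho_p^2-\rho_{p^{-1}\cdot q}^2\geq -4b^2$; (ii) the group law together with $|x_p y_q-y_p x_q|\leq \rho_p\rho_q$ gives $|z_{p^{-1}\cdot q}|\leq |z_q-z_p|+\tfrac12\rho_p\rho_q\leq (A-B)+b^2/2$, and direct expansion yields $z_p^2-z_{p^{-1}\cdot q}^2\geq B(2A-B)-(A-B)b^2-b^4/4$; (iii) from \eqref{formula:norm1} and $\sqrt{b^4+4\alpha^2 A^2}\leq b^2+2\alpha A$, I obtain $r_p^2\leq A/\alpha+b^2/\alpha^2$. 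Combining these and using $A\geq B$ (so that $2A-B\geq A$ and $A-B\leq A$), the left-hand side is bounded below by
\[
A\bigl(B-b^2(1+\tfrac{4}{\alpha})\bigr)-C(\alpha)\, b^4
\]
for an explicit constant $C(\alpha)>0$.

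It then remains to choose $a_1$ and $b_1$ so that this lower bound is nonnegative whenever $A\geq B>a_1$ and $b<b_1$. Invoking $A\geq B$ once more, provided $B-b^2(1+4/\alpha)>0$ I may further estimate the displayed quantity from below by $B^2-Bb^2(1+4/\alpha)-C(\alpha) b^4$, which is nonnegative as soon as $B$ exceeds an explicit multiple of $b^2$ depending only on $\alpha$. Taking for instance $b_1=1/2$ and $a_1$ sufficiently large depending only on $\alpha$ completes the argument. The only real obstacle in the proof is therefore computational: carefully tracking the error terms produced by the Heisenberg twist $\tfrac12(x_py_q-y_px_q)$ appearing in $z_{p^{-1}\cdot q}$ and by the horizontal spread, and verifying that the dominant positive contribution $B(2A-B)\geq AB$ coming from $z_p^2-z_{p^{-1}\cdot q}^2$ indeed dominates them throughout the prescribed range of parameters.
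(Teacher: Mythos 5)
Your proof is correct, and it takes a more direct route than the paper's. The starting point is the same: by \eqref{e:dalpha1} and \eqref{e:norm0'}, the membership $q\in B(p,r_p)$ is equivalent to $(\rho_p^2-\rho_{p^{-1}\cdot q}^2)\,r_p^2+(z_p^2-z_{p^{-1}\cdot q}^2)\geq 0$, which is exactly the criterion $A_p(q)\leq 0$ of Lemma~\ref{A(p,q)}, and your estimate (iii) is essentially the paper's Lemma~\ref{lemma:TT} (a bound $r_p^2\leq C(\alpha)\,|z_p|$ on $\TT(a,b)$, with slightly different constants). The difference is in the packaging: the paper proves the stronger statement that the whole horizontal discs $\DD(t,b)$ lie in $B(p,r_p)$ for every $t\in[z_p,-1]$ (Lemma~\ref{lemma:z:axis}), by checking only the extreme slices $t=-1$ and $t=z_p$ and invoking the Euclidean convexity of balls (Proposition~\ref{prop:ball_convex}), and then deduces the lemma; you instead estimate directly, for the particular pair with $|z_p|=A\geq B=|z_q|$, tracking the Heisenberg twist via $|x_py_q-y_px_q|\leq\rho_p\rho_q\leq b^2$. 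Your chain of inequalities checks out: $\rho_{p^{-1}\cdot q}^2\leq 2(\rho_p^2+\rho_q^2)\leq 4b^2$, $z_p^2-z_{p^{-1}\cdot q}^2\geq B(2A-B)-(A-B)b^2-b^4/4$, $r_p^2\leq A/\alpha+b^2/\alpha^2$, leading to the lower bound $A\bigl(B-b^2(1+4/\alpha)\bigr)-(1/4+4/\alpha^2)\,b^4$, which is nonnegative once $B>a_1$ for a suitable $a_1$ depending only on $\alpha$ and $b<b_1=1/2$; this contradicts $q\notin B(p,r_p)$, and the WLOG reduction is legitimate since hypotheses and conclusion are symmetric in $p$ and $q$. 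What each approach buys: yours is more elementary (no convexity argument, no auxiliary slice sets, fully explicit constants), at the cost of a longer explicit computation for the dominant term $B(2A-B)\geq AB$ against the $O(b^2A)$ and $O(b^4)$ errors; the paper's yields a reusable geometric containment that mirrors the slice-plus-convexity template it also uses for Lemma~\ref{lemma:x:axis0}, keeping the two axis lemmas structurally parallel.
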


These two lemmas will be proved in Section~\ref{section:prooflemma:axis}.

For $\th \in(0,\pi/2)$, we set (see  Figure \ref{fig2}(b))
\begin{equation} \label{e:defCtheta}
\mathcal{C}(\theta): = \{p\in\HH;\, \abs{y_p} <  x_p \tan \theta \}.
\end{equation} 

\begin{lemma} \label{lemma:comparisonincone}
There exists $\th_1 \in (0,\pi/8)$, which depends only on $\al$, such that for all $\th\in (0,\th_1)$ the  following holds. Let $p\in\HH$ and $q\in\HH$ be such that
\begin{gather}
z_q \leq 0 \text{ and } z_p \leq 0 \label{e:c1}\\
\rho_q \leq \rho_p \label{e:c2}\\
q\in \mathcal{C}(\theta) \text{ and }  p\in \mathcal{C}(\theta) \label{e:c3}\\
q\not\in B(p,r_p)  \text{ and } p\not\in B(q,r_q) .\label{e:c4}
\end{gather}
Then we have
\begin{equation}
z_q<2 \, z_p \label{e:c5}
\end{equation}
and
\begin{equation}
\rho_q < \rho_p \cos(2\theta). \label{e:c6}
\end{equation}
\end{lemma}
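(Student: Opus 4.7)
The plan is to convert the two non-inclusion conditions $q\notin B(p,r_p)$ and $p\notin B(q,r_q)$ into analytic inequalities via \eqref{e:dalpha1} and the identity $\alpha^2 r_p^2=\rho_p^2+z_p^2/r_p^2$ coming from $d_\alpha(0,p)=r_p$ (and its analogue for $q$), and then to use the cone and sign hypotheses to sign-control every cross-term. Concretely, setting $S:=x_px_q+y_py_q$, $T:=(x_py_q-y_px_q)/2$ and $\tilde z:=z_q-z_p-T$, so that $\rho_{p^{-1}\cdot q}^{\,2}=\rho_p^2+\rho_q^2-2S$ and $z_{p^{-1}\cdot q}=\tilde z$, the two non-inclusions rewrite as
\begin{equation*}
r_p^2(2S-\rho_q^2)<\tilde z^{\,2}-z_p^2\quad\text{and}\quad r_q^2(2S-\rho_p^2)<\tilde z^{\,2}-z_q^2.
\end{equation*}
Writing the horizontal parts of $p,q$ in polar coordinates with angles $\phi_p,\phi_q\in(-\theta,\theta)$, one has $S=\rho_p\rho_q\cos(\phi_p-\phi_q)\geq\rho_p\rho_q\cos(2\theta)$ and $|T|\leq\rho_p\rho_q\sin(2\theta)/2$. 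I would fix once and for all $\theta_1\in(0,\pi/8)$, so that $\cos(4\theta)=2\cos^2(2\theta)-1>0$ throughout $(0,\theta_1)$.

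For \eqref{e:c5}, the first inequality together with $\rho_p\geq\rho_q$ gives the strictly positive lower bound
\begin{equation*}
r_p^2\rho_q^2(2\cos(2\theta)-1)\leq r_p^2(2S-\rho_q^2)<z_q(z_q-2z_p)+T^2-2T(z_q-z_p),
\end{equation*}
where I have used the expansion $\tilde z^{\,2}-z_p^2=z_q(z_q-2z_p)+T^2-2T(z_q-z_p)$. If \eqref{e:c5} failed, i.e.\ $z_q\geq 2z_p$, then $z_q(z_q-2z_p)\leq 0$ because $z_q\leq 0$, so the right-hand side would be bounded above by $T^2+2|T|\,|z_q-z_p|$. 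Combined with $|T|\leq\rho_p\rho_q\sin(2\theta)/2$ and the a priori bounds $|z_p|\leq\alpha r_p^2$, $|z_q|\leq\alpha r_q^2$ (which follow from \eqref{e:norm0'}), this $T$-contribution is of lower order in $\theta$ and cannot match the positive left-hand side once $\theta$ is small enough in a way that depends only on $\alpha$, yielding a contradiction.

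For \eqref{e:c6}, argue by contradiction, assuming $\rho_q\geq\rho_p\cos(2\theta)$. The second inequality then produces
\begin{equation*}
r_q^2\rho_p^2\cos(4\theta)\leq r_q^2(2\rho_p\rho_q\cos(2\theta)-\rho_p^2)\leq r_q^2(2S-\rho_p^2)<\tilde z^{\,2}-z_q^2,
\end{equation*}
and one expands $\tilde z^{\,2}-z_q^2=z_p(z_p-2z_q)+T^2+2T(z_p-z_q)$. But \eqref{e:c5}, just proved, gives $z_p-2z_q>-3z_p\geq 0$, so $z_p(z_p-2z_q)\leq 0$ because $z_p\leq 0$, and the remaining $T$-terms are again of lower order in $\theta$. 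One gets the contradiction for $\theta$ small, completing the proof. The main obstacle throughout is the quantitative comparison of the $T$-cross-terms with the main positive terms: since $|z_p|$ and $|z_q|$ may be much larger than $\rho_p,\rho_q$ (so that $r_p,r_q$ are dominated by the vertical component), the error estimates must be carried out in a scale-invariant fashion, which is what ultimately fixes the value of $\theta_1$ in terms of $\alpha$.
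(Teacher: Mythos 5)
Your reduction of the two non-inclusions to $r_p^2(2S-\rho_q^2)<\tilde z^{\,2}-z_p^2$ and $r_q^2(2S-\rho_p^2)<\tilde z^{\,2}-z_q^2$ is correct (it is exactly the paper's Lemma~\ref{A(p,q)} in disguise), and your treatment of \eqref{e:c6} does close: there the main term is $r_q^2\rho_p^2\cos(4\theta)$, the cross-term is at most $\alpha\rho_p\rho_q r_q^2\sin(2\theta)\le\alpha\rho_p^2r_q^2\sin(2\theta)$ because $\rho_q\le\rho_p$, and $T^2\le\tfrac{\alpha^2}{4}\rho_p^2r_q^2\sin^2(2\theta)$, so a choice of $\theta_1$ depending only on $\alpha$ works. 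The proof of \eqref{e:c5}, however, has a genuine quantitative gap exactly in the regime you flag as the main obstacle. With the lower bound you display, $r_p^2\rho_q^2(2\cos(2\theta)-1)\le r_p^2(2S-\rho_q^2)$, the claim that the $T$-contribution ``is of lower order in $\theta$'' is false: the dominant error term is $2|T|\,|z_q-z_p|\le\alpha\rho_p\rho_q r_p^2\sin(2\theta)$ (using $|z_q-z_p|\le|z_p|\le\alpha r_p^2$, valid under the contradiction hypothesis $2z_p\le z_q\le 0$), and its ratio to your left-hand side is of order $\alpha\sin(2\theta)\,\rho_p/\rho_q$. Since $\rho_q/\rho_p$ can be arbitrarily small under hypotheses \eqref{e:c1}--\eqref{e:c4}, no choice of $\theta_1$ depending only on $\alpha$ makes this ratio small, so the asserted contradiction does not follow from the inequality as you wrote it.

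The gap is fixable within your scheme, but it requires keeping the sharper, correctly scaled bound: from $S\ge\rho_p\rho_q\cos(2\theta)$ and $\rho_q\le\rho_p$ one has $2S-\rho_q^2\ge\rho_q\bigl(2\rho_p\cos(2\theta)-\rho_q\bigr)\ge\rho_p\rho_q\,(2\cos(2\theta)-1)$, i.e.\ the main term is $r_p^2\rho_p\rho_q(2\cos(2\theta)-1)$, not $r_p^2\rho_q^2(2\cos(2\theta)-1)$. Then, after discarding $z_q(z_q-2z_p)\le0$ and bounding $T^2\le\tfrac14\rho_p^2\rho_q^2\sin^2(2\theta)\le\tfrac{\alpha^2}{4}\rho_p\rho_q r_p^2\sin^2(2\theta)$ and $2|T|\,|z_q-z_p|\le\alpha\rho_p\rho_q r_p^2\sin(2\theta)$, dividing by $\rho_p\rho_q r_p^2$ yields $2\cos(2\theta)-1<\tfrac{\alpha^2}{4}\sin^2(2\theta)+\alpha\sin(2\theta)$, which is indeed impossible once $\theta$ is small depending only on $\alpha$; also avoid the bound $|z_q-z_p|\le|z_p|+|z_q|\le\alpha(r_p^2+r_q^2)$, since $r_q$ is not a priori controlled by $r_p$ without a further (easy but unstated) argument. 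For contrast, the paper avoids all of this bookkeeping by a geometric argument: it shows (Lemma~\ref{lemma:sev1}) that the quadrilateral $\mathcal{Q}(z,\pi(p),2\theta)$ lies in $B(p,r_p)$ whenever $|z-z_p|\le|z_p|$, by checking its four vertices and invoking Euclidean convexity of the ball, and then \eqref{e:c5} and \eqref{e:c6} follow from the inclusion \eqref{e:prop4} relating the cone and the quadrilateral; your analytic route is genuinely different and viable, but only with the corrected scaling above.
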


This lemma will be proved in Section~\ref{section:prooflemma:comparisonincone}.

\section{Proof of Theorem~\ref{thm:main}} \label{section:proofmainthm}

This section is devoted to the proof of Theorem~\ref{thm:main}. Recall that we consider here the case $\HH = \HH^1$ equipped with a homogeneous distance $d_\al$ as defined in \eqref{e:defdistdalpha} (see Section~\ref{section:Hn} for the general case $\HH^n$, $n\geq 1$). Recall also that due to Characterization~\ref{characterization:equivalentbcp}, Theorem~\ref{thm:main} will follow if we find an integer $N\geq 1$ such that $\card \mathcal{B} \leq N$ for every family $\mathcal{B}$ of Besicovitch balls. See Definition~\ref{Besicovitch balls} for the definition of a family of Besicovitch balls.

We first reduce the proof to the case of some specific families of Besicovitch balls. In what follows, when considering families of points $\{p_j\}$ we shall simplify the notations and set $p_j = (x_j,y_j,z_j)$, $\rho_j = \sqrt{x_j^2+ y_j^2}$ and $r_j = d_\al(0,p_j)$. Recall that $\mathcal{C}(\th)$ is defined in \eqref{e:defCtheta}.

\begin{lemma}\label{lemma:reduction}
Let $\th \in (0,\pi/2)$ and let $\mathcal B$ be a family of Besicovitch balls. Then there exists a finite family of points $\{p_j\}$ such that $\mathcal{F} = \{B(p_j, r_j)\}$ is a family of Besicovitch balls with the following properties. For every point $p_j$ in the family, we have
\begin{gather}
z_j \leq0,\label{e:lemreduc1} \\
p_j \in \mathcal{C}(\th), \label{e:lemreduc2}
\end{gather}
and
\begin{equation} \label{e:lemreduccard}
\card \mathcal{B} \leq 2 \left(\dfrac{\pi}{\th} +1\right) \card \mathcal{F} +2.
\end{equation}
\end{lemma}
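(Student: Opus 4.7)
The first step is to pick any point $q_0\in\bigcap\mathcal{B}$ (which exists by Definition~\ref{Besicovitch balls}) and apply the left-translation $q\mapsto q_0^{-1}\cdot q$, which is an isometry of $(\HH,d_\al)$, to reduce to the case $0\in\bigcap \mathcal{B}$; this preserves cardinality and the Besicovitch property. Writing the balls of $\mathcal{B}$ as $B(q_j,s_j)$, the inclusion $0\in B(q_j,s_j)$ gives $d_\al(0,q_j)\le s_j$, so replacing $s_j$ by $r_j:=d_\al(0,q_j)$ shrinks each ball while keeping $0$ on its boundary; the original inequality $d_\al(q_k,q_j)>s_j\ge r_j$ shows that the shrunken family $\mathcal{B}'$ is still a family of Besicovitch balls of the same cardinality. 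No center equals $0$, because otherwise that center would lie in every other ball $B(q_k,r_k)$ (on the boundary) and violate the Besicovitch condition; the marginal case $\card \mathcal{B}\le 1$ is trivial with $\mathcal{F}=\emptyset$.

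Next, I would bound the number of centers on the $z$-axis. If $q_j=(0,0,z_j)$ and $q_k=(0,0,z_k)$ are two such centers with $z_j,z_k>0$, then formulas \eqref{formula:norm1} and \eqref{e:dalpha2} give $r_j=\sqrt{z_j/\al}$, $r_k=\sqrt{z_k/\al}$ and $d_\al(q_j,q_k)=\sqrt{|z_j-z_k|/\al}$, so the Besicovitch conditions $q_j\notin B(q_k,r_k)$ and $q_k\notin B(q_j,r_j)$ would force simultaneously $z_j>2z_k$ and $z_k>2z_j$, which is impossible. The same argument rules out two such centers with $z<0$, so at most two centers lie on the $z$-axis; discarding them produces a subfamily $\mathcal{B}''$ with $\card \mathcal{B}''\ge\card \mathcal{B}-2$ and every remaining center satisfying $\rho_j>0$.

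The final step is a pigeonhole on angular sector and sign of $z$. Set $N:=\lceil \pi/\theta\rceil$, so that $\pi/N\le\theta$. Since $\mathcal{B}''$ is finite, one can choose an offset $\delta\in\R$ so that no $\varphi_j:=\arg(x_j+iy_j)$ equals $\delta+k\cdot 2\pi/N\pmod{2\pi}$ for any integer $k$; then each $\varphi_j$ lies strictly inside one of $N$ half-open sectors of angular width $2\pi/N$. Splitting each sector further according to whether $z_j\le 0$ or $z_j>0$ yields $2N$ subfamilies, and by pigeonhole at least one of them (call it $\mathcal{G}$) has cardinality $\ge\card \mathcal{B}''/(2N)$. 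The rotation $\operatorname{R}_\vartheta$ of \eqref{e:rotations} that recenters the sector of $\mathcal{G}$ at $\varphi=0$ sends every $\varphi_j$ of $\mathcal{G}$ into the open arc $(-\pi/N,\pi/N)\subseteq(-\theta,\theta)$, which is precisely the angular part of $\mathcal{C}(\theta)$; composing with the reflection $\operatorname{R}$ of \eqref{e:reflection} when $\mathcal{G}$ has $z_j>0$ changes $z_j$ to $-z_j\le 0$ while leaving the cone invariant. Both isometries fix the origin, so the radii $r_j=d_\al(0,\cdot)$ are unchanged, and the image family $\mathcal{F}$ is a family of Besicovitch balls whose centers satisfy $p_j\in\mathcal{C}(\theta)$ and $z_j\le 0$. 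Combining the three steps gives $\card \mathcal{B}\le 2N\,\card \mathcal{F}+2\le 2(\pi/\theta+1)\,\card \mathcal{F}+2$.

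The only technical obstacle is ensuring, after the pigeonhole, that the centers of $\mathcal{G}$ genuinely fall in the open cone $\mathcal{C}(\theta)$ rather than on its boundary; this is where the generic choice of $\delta$ is used, and it works because the family $\mathcal{B}$ is finite. All remaining steps are essentially bookkeeping based on the isometries already introduced in Section~\ref{section:preliminaries}.
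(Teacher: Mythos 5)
Your proposal is correct and follows essentially the same route as the paper: translate a common point to the origin, shrink each radius to $d_\al(0,\cdot)$, discard the (at most two) balls centered on the $z$-axis, and use the reflection $\operatorname{R}$ and a rotation together with a pigeonhole over angular sectors to land in $\mathcal{C}(\th)$ with $z_j\le 0$, yielding the bound \eqref{e:lemreduccard}. The only cosmetic differences are that you bound the axis-centered balls by the explicit formula \eqref{formula:norm1} rather than by the Euclidean convexity of balls (Proposition~\ref{prop:ball_convex}), and you pigeonhole jointly over sector and sign of $z$ instead of reflecting first; both give the same count.
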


\begin{proof}
Let $\mathcal B = \{ B(q_j,t_j)\}_{j=1}^k$ be a family of Besicovitch balls where $k=\card \mathcal B$.
 Take $q\in \cap_{j=1}^k B(q_j, t_j)$. Set $p_j = q^{-1}\cdot q_j$. Remembering that left-translations are isometries and that, by convention, we set $r_j = d_\al(0,p_j)$, we get that $0\in \cap_{j=1}^k B(p_j, r_j)$ and $d_\al(p_j,p_i) =d_\al(q_j,q_i) >\max(t_j,t_i) \geq \max(r_j,r_i)$ hence $\mathcal B' =\{ B(p_j, r_j)\}_{j=1}^k$ is a family of Besicovitch balls.

Since balls are Euclidean convex (see Proposition \ref{prop:ball_convex}) and since $0\in \partial B(p_j, r_j)$ for all $j=1,\ldots,k$, 
there are at most two balls in $\mathcal B'$ with their center on the $z$-axis.

Next, up to replacing the family $\{p_j\}$ by $\{\operatorname{R}(p_j)\}$ (see \eqref{e:reflection} for the definition of the reflection $\operatorname{R}$) and up to re-indexing the points, one can find $l$ points $p_1, \ldots, p_l$ that satisfy \eqref{e:lemreduc1}, such that  $\pi(p_1), \ldots, \pi(p_l)\neq 0$ (see \eqref{e:projection} for the definition of the projection $\pi$), and with $2l \geq (k-2)$.

Finally, up to a rotation around the $z$-axis (see \eqref{e:rotations} for the definition of rotations) and up to re-indexing the points, we get by the pigeonhole principle that there exists an integer $k'$ such that $$(\dfrac{\pi}{\th} +1)\; k' \geq l$$ and such that $p_j$ satisfies  \eqref{e:lemreduc2} for all $j=1,\ldots,k'$. Then the family $\mathcal{F} = \{B(p_j, r_j)\}_{j=1}^{k'}$ gives the conclusion.
\end{proof}

We are now ready to conclude the proof of Theorem~\ref{thm:main} using Lemma~\ref{lemma:x:axis0},
Lemma~\ref{lemma:z:axis0} and 
Lemma~\ref{lemma:comparisonincone}.

\medskip

\noindent \textbf{\textit{Proof of Theorem~\ref{thm:main}.}} We fix some values of $\th\in (0,\pi/8)$, $a>0$, and $b>0$ so that the conclusions of Lemma~\ref{lemma:x:axis0}, Lemma~\ref{lemma:z:axis0} and Lemma~\ref{lemma:comparisonincone} hold.

Next, we fix some $R>0$ large enough so that 
$$ \{p\in \HH;\; x_p\in[0,a], \;|z_p|<b,\;  |y_p|<x_p \tan \theta\} 
\subset
 U(0,R)$$
and
$$\{p\in\HH;\;  z_p\in[-a,0],\; \rho_p<b\}
\subset
 U(0,R),$$
where $U(0,R)$ denotes the open ball with center 0 and radius $R$ in $(\HH,d_\alpha)$. Such an $R$ exists since in the above two inclusions, the sets on the left are bounded.
As a consequence, we have
\begin{equation}
\label{condizione1}
(\HH\setminus U(0,R)) \cap \{p\in\HH;\; |z_p|<b,\;  |y_p|<x_p  \tan \theta \} \subset \PP(a,b,\th)
\end{equation}
and
\begin{equation}
\label{condizione2}
(\HH\setminus U(0,R)) \cap \{p\in \HH;\;  z_p \leq 0 ,\; \rho_p<b\} \subset \TT(a,b),
\end{equation}
recall \eqref{e:defP} for the definition of $\PP(a,b,\th)$ and \eqref{e:defT} for the definition of $\TT(a,b)$.

 Let us now consider a family of Besicovitch balls $\mathcal{F} = \{B(p_j, r_j)\}_{j=1}^k$ where, as defined by convention, we have $r_j = d_\al(0,p_j)$ and where the centers $p_j$ satisfy \eqref{e:lemreduc1} and \eqref{e:lemreduc2}. Noting that the family $\{B(\delta_\lambda(p_j), \lambda r_j)\}_{j=1}^k$ also satisfies the same properties for all $\lambda>0$, one can assume with no loss of generality that $$R=\min\{ d_\alpha (0,p_j);\; j=1, \ldots, k\}$$
up to a dilation by a factor $\lambda = R / \min \{r_1,\dots,r_k\} $.

Let $m>0$ and $M>0$ be defined as 
$$-m:= \min \{ z_p;\; p\in B(0,R)\}$$
and
$$M:= \max \{ \rho_p;\; p\in B(0,R)\}.$$
We will bound $k= \card \mathcal F$ in terms of the constants $m$, $M$, $b$ and $\theta$.

We re-index the points so that
$$0< \rho_1\leq \rho_2\leq \ldots \leq \rho_k.$$
Let $l\in\{1, \ldots, k\}$ be such that $d_\al(0,p_l) = R$. By choice of $l$ and by definition of $m$ and $M$,  we have 
$$\rho_l \leq M \quad \text{ and }\quad -m\leq z_l \, .$$

Let $j_0\geq 1$ be a large enough integer such that $M \cos^{j_0} (2\theta) <b$. Then we have $l\leq j_0+1$. Indeed, otherwise we would get from \eqref{e:c6} in Lemma~\ref{lemma:comparisonincone} that
\begin{equation*}
0< \rho_1 < \rho_2 \cos(2\theta) < \cdots < \rho_{l} \cos^{l-1}(2\theta) \leq  M \cos^{j_0+1} (2\theta) <b \cos(2\theta)
\end{equation*}
and hence $\rho_1 < \rho_2 <b$. Then, by choice of $R$ (remember \eqref{condizione2}), $p_1$ and $p_2$ would be distinct points in $\TT(a,b)$ which contradicts Lemma~\ref{lemma:z:axis0}. 

Let $j_1\geq 1$ be a large enough integer such that $2^{-j_1} m<b$. Then we have $k-l\leq j_1$. Indeed, otherwise we would get from \eqref{e:c5} in Lemma~\ref{lemma:comparisonincone} that 
\begin{equation*}
-m \leq z_l < \dots < 2^{k-l-1} z_{k-1} < 2^{k-l} z_{k} \leq 0
\end{equation*}
and hence $|z_k| < |z_{k-1}| < 2^{-(k-l-1)} m \leq 2^{-j_1} m<b$. Then, by choice of $R$ (remember \eqref{condizione1}), $p_{k-1}$ and 
$p_k$ would be  distinct points in $\PP(a,b,\th)$ which contradicts Lemma~\ref{lemma:x:axis0}. 

All together we get the following bound on $\card \mathcal F=k$,
 $$ \card \mathcal F \leq
 \log_2(m/b)+ \log_{\cos(2\theta)}(b/M) +3.$$
 
 Combining this with \eqref{e:lemreduccard} in Lemma~\ref{lemma:reduction}, we get the following bound on the cardinality of arbitrary families $\mathcal B$ of Besicovitch balls,
 $$\card \mathcal B \leq 2 (\pi / \th +1)(\log_2(m/b)+ \log_{\cos(2\theta)}(b/M)+3)  +2\, ,$$
 which concludes the proof of Theorem~\ref{thm:main}.
 \qed

\section{Proof of Lemma~\ref{lemma:x:axis0} and of Lemma~\ref{lemma:z:axis0}} \label{section:prooflemma:axis}

This section is devoted to the proof of Lemma~\ref{lemma:x:axis0} and Lemma~\ref{lemma:z:axis0}. We begin with a remark that will be technically useful. Given $p\in\HH$ and $q\in\HH$, we set
$$A_p(q):=r^2_p
\left(x_q^2 +
y_q^2 - 2 x_q x_p -2 y_q y_p\right)
+
 \left(z_q   - \dfrac{x_p y_q - x_q y_p}{2}  \right  )^2
 -2 z_p\left(z_q   - \dfrac{x_p y_q - x_q y_p}{2}  \right  ).
 $$
Recall that, following \eqref{e:rp}, we have $r_p=d_\alpha(0,p)$ by convention.
 
 \begin{lemma}\label{A(p,q)}
We have $q\in B(p,r_p) $ if and only if $A_p(q)\leq 0.$
\end{lemma}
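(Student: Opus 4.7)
The plan is that this lemma is just a coordinate unpacking of the characterization of balls given in \eqref{e:dalpha1}, combined with the fact that $r_p = d_\alpha(0,p)$ lies exactly on the sphere of radius $r_p$ centered at $0$, which lets us eliminate $\alpha$ from the inequality.

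First I would apply \eqref{e:dalpha1} with $r = r_p$ to write
$$q \in B(p,r_p) \iff r_p^2 \rho_{p^{-1}\cdot q}^2 + z_{p^{-1}\cdot q}^2 \leq \alpha^2 r_p^4.$$
The key observation is that since $r_p = d_\alpha(0,p)$, the equality case \eqref{e:norm0'} (applied to $p$ itself) gives
$$\alpha^2 r_p^4 = r_p^2 \rho_p^2 + z_p^2,$$
so $\alpha$ disappears from the inequality and we are left with
$$r_p^2\bigl(\rho_{p^{-1}\cdot q}^2 - \rho_p^2\bigr) + \bigl(z_{p^{-1}\cdot q}^2 - z_p^2\bigr) \leq 0.$$

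Next I would plug in the explicit coordinate formulas recorded right after \eqref{e:dalpha2}, namely
$$\rho_{p^{-1}\cdot q}^2 = (x_q - x_p)^2 + (y_q - y_p)^2 \quad\text{and}\quad z_{p^{-1}\cdot q} = z_q - z_p - \tfrac{1}{2}(x_p y_q - y_p x_q).$$
Expanding $\rho_{p^{-1}\cdot q}^2 - \rho_p^2$ gives exactly $x_q^2 + y_q^2 - 2x_qx_p - 2y_qy_p$. For the $z$-terms, setting $u := z_q - \tfrac{1}{2}(x_p y_q - x_q y_p)$ so that $z_{p^{-1}\cdot q} = u - z_p$, one computes
$$z_{p^{-1}\cdot q}^2 - z_p^2 = (u - z_p)^2 - z_p^2 = u^2 - 2 z_p u.$$
Substituting these two identities into the displayed inequality yields precisely $A_p(q) \leq 0$, which is the desired equivalence.

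There is essentially no obstacle here beyond the algebraic bookkeeping; the only subtle point is to notice that the coefficient $\alpha^2 r_p^4$ on the right-hand side of the ball condition can be rewritten using \eqref{e:norm0'}, which is exactly the ingredient that turns the raw inequality into the $\alpha$-free quantity $A_p(q)$.
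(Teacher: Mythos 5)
Your proof is correct and follows essentially the same route as the paper: apply \eqref{e:dalpha1} with $r=r_p$, eliminate $\alpha^2$ via the equality \eqref{e:norm0'} for $p$, and expand in coordinates to recover $A_p(q)\leq 0$. The paper's own proof does exactly this, only leaving the final algebraic expansion implicit.
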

\begin{proof}
Recalling \eqref{e:dalpha1}, we have 
 $$
d_\alpha(p,q) \leq r_p \iff
\dfrac{(x_q-x_p)^2}{r_p^2} +
\dfrac{(y_q-y_p)^2}{r_p^2} +
\dfrac{\left(z_q-z_p   - \dfrac{x_p y_q - x_q y_p}{2}  \right  )^2}{r_p^4} 
\leq
\alpha^2~~.
$$
Combining this with \eqref{e:norm0'}, which gives
$$\dfrac{x_p^2+y_p^2}{r_p^2} +
\dfrac{z_p   ^2}{r_p^4} 
=
\alpha^2~,$$ we get the conclusion.
\end{proof}

\subsection{Proof of Lemma~\ref{lemma:x:axis0}}

\begin{lemma}\label{lemma:PP}
There exist constants $c_1>0$ and $c_2>0$, depending only on $\al$, such that, 
for all $\theta\in (0,\pi/4)$, all $a>0$ and $b>0$ such that $a^2 \geq b$, we have
$$c_1 \; x_p \leq r_p  \leq c_2 \; x_p$$
for all $p\in \PP(a,b, \theta)$.
\end{lemma}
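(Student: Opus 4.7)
The plan is to derive both inequalities directly from the explicit analytic expression \eqref{formula:norm1} for $r_p=d_\alpha(0,p)$ by bounding $\rho_p$ and $|z_p|$ in terms of $x_p$ on $\PP(a,b,\theta)$. The key observation is that the defining inequalities of $\PP(a,b,\theta)$ force $\rho_p$ to be comparable to $x_p$ (because $\theta<\pi/4$) and force the vertical term $z_p^2$ to be dominated by $x_p^4$ (because $a^2\geq b$).

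More precisely, first I would note that for any $p\in\PP(a,b,\theta)$ with $\theta\in(0,\pi/4)$ one has $x_p>0$ and $|y_p|<x_p\tan\theta<x_p$, so
\begin{equation*}
x_p^2 \;\leq\; \rho_p^2 \;=\; x_p^2+y_p^2 \;\leq\; (1+\tan^2\theta)\,x_p^2 \;\leq\; 2\,x_p^2.
\end{equation*}
Next, since $x_p>a$ and $|z_p|<b\leq a^2$, one has $|z_p|<x_p^2$, hence $z_p^2<x_p^4\leq \rho_p^4$. These two bounds give
\begin{equation*}
x_p^4 \;\leq\; \rho_p^4 \;\leq\; \rho_p^4+4\alpha^2 z_p^2 \;\leq\; \rho_p^4(1+4\alpha^2) \;\leq\; 4(1+4\alpha^2)\,x_p^4.
\end{equation*}

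Finally, I would plug these estimates into
\begin{equation*}
r_p^2 \;=\; \frac{\rho_p^2+\sqrt{\rho_p^4+4\alpha^2 z_p^2}}{2\alpha^2}.
\end{equation*}
For the lower bound, the numerator is at least $\rho_p^2+\rho_p^2\geq 2x_p^2$, giving $r_p\geq x_p/\alpha$. For the upper bound, the numerator is at most $2x_p^2+2x_p^2\sqrt{1+4\alpha^2}$, giving $r_p\leq x_p\sqrt{(1+\sqrt{1+4\alpha^2})/\alpha^2}$. Setting $c_1:=1/\alpha$ and $c_2$ to this last constant, both depending only on $\alpha$, yields the claim.

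There is essentially no obstacle here: the lemma is a direct computation from \eqref{formula:norm1}. The only point deserving care is the role of the hypothesis $a^2\geq b$, which is exactly what is needed to absorb the $z_p^2$ contribution into the horizontal part; without it the formula for $r_p$ would be dominated by $|z_p|^{1/2}$ rather than by $x_p$, and no such comparison could hold.
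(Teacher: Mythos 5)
Your proposal is correct and takes essentially the same approach as the paper: both proofs work directly from the explicit formula \eqref{formula:norm1}, using $|y_p|<x_p\tan\theta<x_p$ (since $\theta<\pi/4$) for the horizontal part and $|z_p|<b\leq a^2<x_p^2$ to absorb the vertical term, which yields $c_1\,x_p\leq r_p\leq c_2\,x_p$ with constants depending only on $\alpha$. The only differences are the explicit values of $c_1$ and $c_2$, which are immaterial.
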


\begin{proof} By \eqref{formula:norm1}, we always have  $r_p^2 \geq x_p^2  / (2 \alpha^2) $. On the other hand, we can bound from above $r_p^2$ using that $\tan \theta < 1$, since $\theta<\pi/4$, and that $|z_p|<b\leq a^2\leq x_p^2$ if  
$p\in \PP(a,b, \theta)$ (see \eqref{e:defP} for the definition of $\PP(a,b, \theta)$). Namely, we have
\begin{eqnarray*}
r_p^2 &=&
\dfrac{x_p^2 +y_p^2 +\sqrt{ (x_p^2 +y_p^2 )^2+4\alpha^2 z_p^2}}{2\alpha^2} \\
&\leq&
\dfrac{x_p^2 (1+\tan^2\theta)     +\sqrt{ (x_p^2 (1+\tan^2\theta)  )^2+4\alpha^2 z_p^2}}{2\alpha^2} \\
&\leq&
\dfrac{2 x_p^2     +\sqrt{ 4 x_p^4+4\alpha^2 b^2}}{2\alpha^2} \\
&\leq&
\dfrac{2      +\sqrt{ 4  +4\alpha^2  }}{2\alpha^2} \,  x_p^2\, .
\end{eqnarray*}
\end{proof}

For $t\in\R$, $b>0$ and $\th\in(0,\pi/2)$, we set (see  Figure \ref{fig3}(a))
$$\RR(t, b,\theta):=\{p\in \HH;\; x_p=t,\; |z_p|<b,\;|y_p|<x_p \tan \theta \}. $$

\begin{figure}
\centering
\subfigure[The quadrilateral $\mathcal R (t,b,\theta)$] 
{
    \label{fig:R_in_P}
 \includegraphics[height=6.5cm]{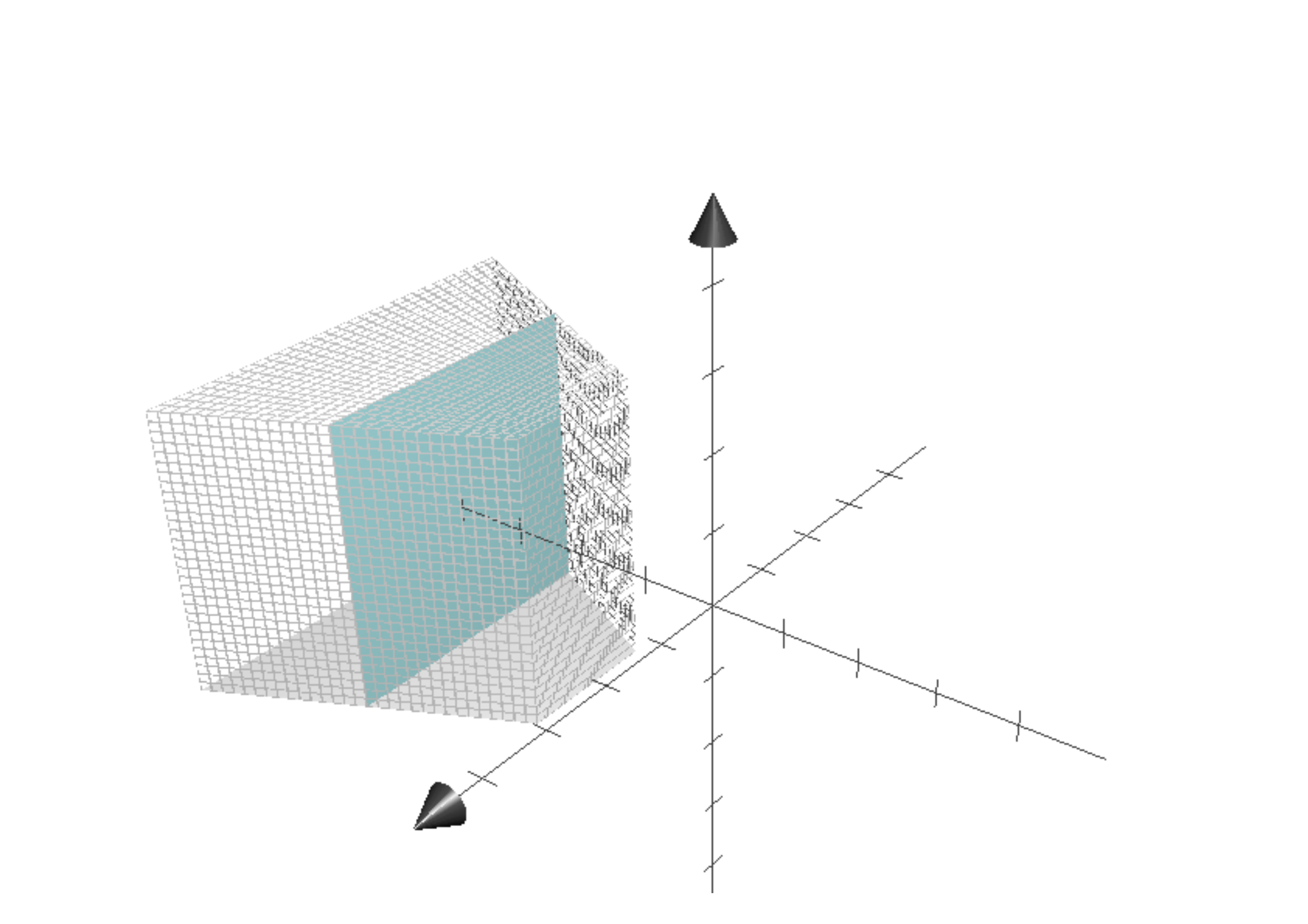}
}
\subfigure[The disc $\mathcal D (t,b)$] 
{
    \label{fig:R}
\includegraphics[height=5.5cm]{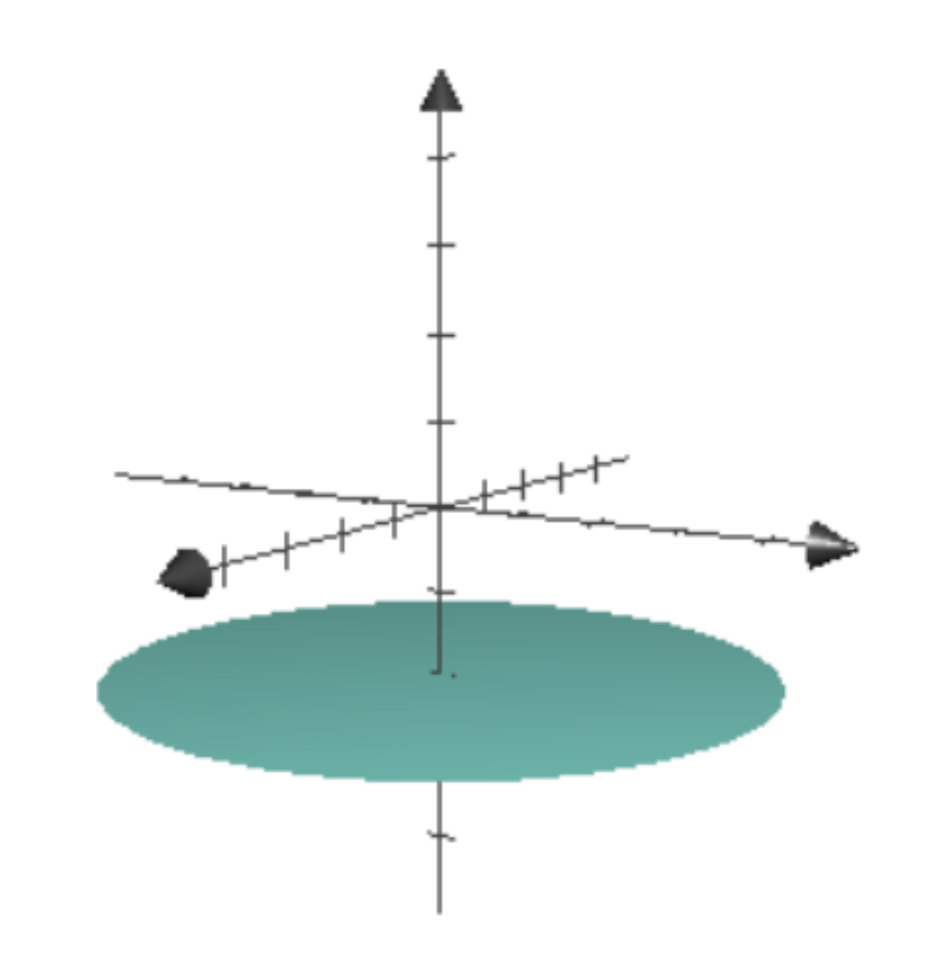}
}
\caption{The surfaces $\mathcal R (t,b,\theta)$ and $\mathcal D (t,b)$.}
\label{fig3}
\end{figure} 

\begin{lemma}\label{lemma:x:axis}
There exists $\theta_0\in (0,\pi/4)$, which depends only on $\al$, such that for all $\theta \in (0,\theta_0)$, there exists $a_0(\theta) \geq 1$
such that for all $a>a_0(\theta)$ and for all $b\in (0,1)$, we have 
$$\RR(t, b,\theta) \subset B(p,r_p)$$
for all  $p\in\PP(a, b, \theta)$ and all $t\in [1,x_p]$.
\end{lemma}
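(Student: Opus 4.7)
The plan is to reduce the inclusion $q \in B(p,r_p)$ to the scalar inequality $A_p(q) \leq 0$ via Lemma~\ref{A(p,q)}, and then to show that for $p\in \PP(a,b,\theta)$ with $a$ large and $\theta$ small, and for $q\in\RR(t,b,\theta)$ with $t\in[1,x_p]$, the quantity $A_p(q)$ is dominated by its most negative term and hence nonpositive. Since $a\geq 1$ and $b<1$ will be assumed, the hypothesis $a^2\geq b$ of Lemma~\ref{lemma:PP} holds, giving $r_p\geq c_1 x_p$.

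Write $A_p(q) = r_p^2\cdot \mathrm{I} + \mathrm{II}$, where $\mathrm{I} := x_q^2+y_q^2-2x_qx_p-2y_qy_p$ and $\mathrm{II} := u^2 - 2z_p u$ with $u := z_q - \tfrac12(x_p y_q - x_q y_p)$. For term $\mathrm{I}$, the inequalities $1\leq x_q\leq x_p$, $|y_q|\leq x_q\tan\theta$, and $|y_p|\leq x_p\tan\theta$ give
\begin{equation*}
\mathrm{I} \leq -x_q x_p + x_q x_p \tan^2\theta + 2 x_q x_p\tan^2\theta = -x_q x_p\bigl(1-3\tan^2\theta\bigr).
\end{equation*}
Hence, after choosing $\theta_0<\pi/4$ small enough that $1-3\tan^2\theta_0\geq \tfrac12$, we get $r_p^2\cdot \mathrm{I}\leq -\tfrac{c_1^2}{2}\,x_q x_p^3$ for all $\theta\in(0,\theta_0)$.

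For term $\mathrm{II}$, bound $|u|\leq |z_q| + \tfrac12|x_p y_q - x_q y_p| \leq 1 + x_p x_q\tan\theta$ (using $b<1$), and then, using $|z_p|<1$,
\begin{equation*}
\mathrm{II} \leq u^2 + 2|u| \leq 3 + 4 x_p x_q\tan\theta + x_p^2 x_q^2\tan^2\theta.
\end{equation*}
Combining, and dividing through by $x_q\geq 1$ (which only weakens the inequality), together with $x_q\leq x_p$,
\begin{equation*}
A_p(q)/x_q \;\leq\; -\tfrac{c_1^2}{2}x_p^3 + x_p^3\tan^2\theta + 4 x_p\tan\theta + 3.
\end{equation*}

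The last step is to shrink $\theta_0$ further so that $\tan^2\theta_0 < c_1^2/4$; then for $\theta\in(0,\theta_0)$ the leading term becomes $\leq -\tfrac{c_1^2}{4}x_p^3$, which beats the remainder $4x_p\tan\theta + 3$ once $x_p$ exceeds some threshold $a_0(\theta)\geq 1$ depending only on $c_1$ and $\theta$. For $a>a_0(\theta)$ and any $p\in\PP(a,b,\theta)$ this gives $A_p(q)\leq 0$, so $q\in B(p,r_p)$ by Lemma~\ref{A(p,q)}. The only real obstacle is the bookkeeping of constants, in particular ensuring that the coefficient of $x_p^3\tan^2\theta$ produced by the square $u^2$ is smaller than the negative coefficient $c_1^2/2$ coming from $r_p^2\cdot \mathrm{I}$; this forces the choice of $\theta_0$ in terms of the constant $c_1$ from Lemma~\ref{lemma:PP}, and is precisely why $\theta_0$ must be taken depending only on $\alpha$.
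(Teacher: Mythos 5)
Your proof is correct, and it follows the same overall reduction as the paper (via Lemma~\ref{A(p,q)} and the comparability $r_p\geq c_1 x_p$ from Lemma~\ref{lemma:PP}), but the way you handle the slice parameter $t$ is genuinely different. The paper only verifies $A_p(q)\leq 0$ at the two extreme slices $t=1$ and $t=x_p$, and then deduces the inclusion for intermediate $t$ from the Euclidean convexity of the balls (Proposition~\ref{prop:ball_convex}), since $\RR(t,b,\theta)$ lies in the convex hull of $\RR(1,b,\theta)$ and $\RR(x_p,b,\theta)$. You instead get a single estimate uniform in $t\in[1,x_p]$ by absorbing the positive contributions of the first group into the negative term (using $x_q^2\leq x_q x_p$, $y_q^2\leq x_qx_p\tan^2\theta$, $|y_qy_p|\leq x_qx_p\tan^2\theta$) and then normalizing by $x_q\geq 1$, so that the worst case $x_q=x_p$ is handled by the same inequality; this makes the convexity step unnecessary for this lemma and, as a small bonus, you never need the upper bound $r_p\leq c_2x_p$, only the lower bound $r_p\geq c_1x_p$ (which holds without the hypothesis $a^2\geq b$). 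Your constant bookkeeping checks out: $\mathrm{I}\leq -x_qx_p(1-3\tan^2\theta)$, $\mathrm{II}\leq 3+4x_px_q\tan\theta+x_p^2x_q^2\tan^2\theta$, and after dividing by $x_q$ and using $x_q\leq x_p$ the choice $\tan^2\theta_0<\min(1/6,\,c_1^2/4)$ and a threshold $a_0(\theta)$ for the cubic-versus-linear comparison give $A_p(q)\leq 0$; both $\theta_0$ and $a_0(\theta)$ depend only on $\alpha$ (through $c_1$) and $\theta$, as required.
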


\begin{proof} Take $\theta\in (0,\pi/4)$, $a\geq 1 > b$, $p\in\PP(a, b, \theta)$, $t>0$ and consider $q\in \RR(t, b,\theta)$.
By Lemma~\ref{A(p,q)}, showing that $q\in B   (p,r_p) $ is equivalent to prove that $A_p(q)$ is negative. Since $x_q=t$, we have
\begin{equation*}
\begin{split}
&A_p(q) =
r^2_p
\left(t^2 +
y_q^2 - 2 t x_p -2 y_q y_p\right)
+
 \left(z_q   - \dfrac{x_p y_q - t y_p}{2}  \right  )^2
 -2 z_p\left(z_q   - \dfrac{x_p y_q - t y_p}{2}  \right  )
 \\
 &\leq 
 r^2_p
\left(t^2 +y_q^2 - 2 t x_p +2 |y_q y_p|\right)
+
 \left(|z_q|   +\dfrac{|x_p y_q| + t |y_p|}{2}  \right  )^2
 + 2|z_p| \left(|z_q |+  \dfrac{|x_p y_q |+ t |y_p |}{2}\right).
 \end{split}
 \end{equation*}
Note that all terms in the last inequality are positive except   $-2 t x_p$, since both $t$ and $x_p$ are positive. 

We now use the conditions $|y_q| < t \tan \theta$, $|z_q|<b $,
$x_p>a$, $|y_p| < x_p \tan \theta$, $|z_p|<b $, $b<1$ and $\tan \theta < 1 $, since $\theta< \pi/4$, to get
\begin{equation*}
\begin{split}
A_p(q)&\leq
r^2_p
\left(
t^2 + t^2 \tan^2 \theta - 2 t x_p +2   x_p    t \tan^2 \theta\right)
+
 \left(b   + t x_p  \tan \theta   \right  )^2+ 2b^2 + 2b t x_p  \tan \theta \\
&\leq 
 - 2 t x_p r^2_p+
r^2_p
\left(
t^2 + t^2 \tan^2 \theta +2   x_p    t\tan^2 \theta \right)
+
 \left(1   + x_p t \tan \theta  \right  )^2
 +2   \left(1+  x_p t    \right  ).
 \end{split}
 \end{equation*}
  We consider now separately the case $t=1$ and $t=x_p$.
  
 For $t=1$, we  bound using Lemma~\ref{lemma:PP} 
\begin{equation*}
\begin{split}
A_p(q)&\leq 
 - 2  x_p r^2_p+r^2_p\left(1 + \tan^2 \theta +2   x_p    \tan^2 \theta \right)
+
 \left(1   + x_p \tan \theta  \right  )^2
 +2   \left(1+  x_p     \right  )\\	 
 &\leq
 - 2  c_1^2  x^3_p+
c_2^2 x^2_p
\left(
1 +  \tan^2 \theta +2   x_p    \tan^2 \theta \right)
+
 \left(1   + x_p  \tan \theta  \right  )^2
 +2   \left(1+  x_p    \right  )\\	 
 &\leq  
 -  2 \left(c_1^2  -
c_2^2    \tan^2 \theta\right) \, x^3_p +
2c_2^2 x_p^2 
+
 \left(1   + x_p   \right  )^2
 +2   \left(1+  x_p    \right  )\,.
\end{split}
 \end{equation*}
Hence $A_p(q)
\leq -  2\left( c_1^2  -
c_2^2    \tan^2 \theta\right)\,  x^3_p +o(x_p^3)$  as $x_p$ goes to infinity. Thus, choosing $\theta$ small enough so that $ c_1^2  -
c_2^2    \tan^2 \theta >0$, we get that $A_p(q)\leq 0$ provided $x_p$ is large enough.

 For $t=x_p$, we use once again Lemma~\ref{lemma:PP}  and get 
 \begin{equation*}
 \begin{split}
A_p(q)
&\leq 
 - 2  r_p^2  x^2_p+
r^2_p
\left( x_p^2 + 3 x_p^2 \tan^2 \theta \right)
+
 \left(1   + x_p ^2 \tan \theta  \right  )^2
 +2   \left(1+  x_p ^2    \right  )\\	 
 &\leq 
 -   c_1^2  x^4_p+
3 c_2^2 x^4_p \tan^2 \theta 
+
 \left(1   + x_p ^2 \tan \theta  \right  )^2
 +2   \left(1+  x_p ^2    \right  )\\	 
 &\leq 
- \left(c_1^2  -
3c_2^2  
   \tan^2 \theta 
-
\tan^2 \theta \right)  x_p ^4 + 1   + 2 x_p ^2 
 +2   \left(1+  x_p ^2    \right). 
\end{split}
\end{equation*}
Hence $A_p(q)\leq -\left(c_1^2  - 3c_2^2  \tan^2 \theta - \tan^2 \theta \right)  x_p ^4 +o(x_p^4)$  as $x_p$ goes to infinity. Thus, choosing $\theta$ small enough so that $c_1^2- 3c_2^2  \tan^2 \theta - \tan^2 \theta >0$, we get that $A_p(q)\leq 0$ provided $x_p$ is large enough.

All together we have showed that one can find $\theta_0 \in (0,\pi/4)$, depending only on $\alpha$, and for all $\theta \in (0,\theta_0(\alpha))$, some $a_0(\theta) \geq 1$, such that for $a>a_0(\theta)$ and $b<1$ and for all $p\in \PP(a, b, \theta)$, we have $$\RR(1, b,\theta) \subset B_\alpha (p,r_p) \quad \text{ and }\quad  \RR(x_p, b,\theta)  \subset B_\alpha (p,r_p).$$
Since $B_\alpha (p,r_p)$ is Euclidean convex by Proposition \ref{prop:ball_convex}, we conclude the proof noting that $\RR(t, b,\theta)  $, for $ t\in [1,x_p]$, is in the Euclidean convex hull of 
$\RR(1, b,\theta) $ and $ \RR(x_p, b,\theta) $.
\end{proof}

\noindent \textit{\textbf{Proof of Lemma~\ref{lemma:x:axis0}.}}
Let $\theta_0\in (0,\pi/4)$ be given by Lemma~\ref{lemma:x:axis}. Let $\th \in (0,\th_0)$ and let $a_0(\theta) \geq 1$ be given by Lemma~\ref{lemma:x:axis}. Let $a>a_0(\theta)$ and $b\in (0,1)$. Let $p\in\HH$ and $q\in\HH$ be such that $p\notin  B(q, r_q)$ and $q\notin  B(p, r_p)$. Let us assume with no loss of generality that $x_q\leq x_p$. Then, if both $p$ and $q$ were in $\PP(a,b,\th)$, by Lemma~\ref{lemma:x:axis} we would have $q\in  \RR(x_q,b,\th)\subset B (p,r_p)$ since
$ x_q\in [1,x_p]$. But this would contradict the assumptions. \qed

\subsection{Proof of Lemma~\ref{lemma:z:axis0}}

\begin{lemma}\label{lemma:TT}
Let $a\geq 1$ and $b>0$. Then  for all $p\in \TT(a,b)$, we have
$$r_p^2 \leq \dfrac{b^2+\sqrt{b^4+4\alpha^2}}{2\alpha^2}\, |z_p|. $$
\end{lemma}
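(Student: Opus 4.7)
The plan is to use the explicit analytic formula for $r_p^2$ given in \eqref{formula:norm1}, namely
\[
r_p^2 = \frac{\rho_p^2 + \sqrt{\rho_p^4 + 4\alpha^2 z_p^2}}{2\alpha^2},
\]
and exploit the two defining inequalities of $\TT(a,b)$: on this set, $\rho_p < b$ is small while $|z_p| > a \geq 1$ is (in particular) at least $1$. The point is simply to bound each of the two terms in the numerator by something proportional to $|z_p|$.

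First I would bound the term $\rho_p^2$. Since $\rho_p < b$ we have $\rho_p^2 < b^2$, and since $|z_p| > a \geq 1$ this gives $\rho_p^2 \leq b^2 \, |z_p|$. Next, for the square-root term, I would use $\rho_p^4 < b^4 \leq b^4 z_p^2$ (again because $z_p^2 > 1$) to obtain
\[
\sqrt{\rho_p^4 + 4\alpha^2 z_p^2} \leq \sqrt{b^4 z_p^2 + 4\alpha^2 z_p^2} = |z_p|\sqrt{b^4 + 4\alpha^2}.
\]
Adding the two bounds and dividing by $2\alpha^2$ yields exactly
\[
r_p^2 \leq \frac{b^2 + \sqrt{b^4 + 4\alpha^2}}{2\alpha^2}\, |z_p|,
\]
as required.

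There is essentially no obstacle: the lemma is a direct numerical consequence of \eqref{formula:norm1} combined with the defining inequalities of $\TT(a,b)$, with the condition $a\geq 1$ used precisely to convert the bound $\rho_p<b$ into a bound linear in $|z_p|$. The only thing to double-check is that one does not need to be more careful with the exponents (i.e.\ that $|z_p|\geq 1$ rather than $|z_p|^2\geq 1$ suffices), which it does, since we only need to compare $\rho_p^2$ with $|z_p|$ and $\rho_p^4$ with $z_p^2$.
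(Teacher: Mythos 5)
Your proof is correct and follows essentially the same route as the paper: both start from the explicit formula \eqref{formula:norm1} and use $\rho_p<b$ together with $|z_p|>a\geq 1$ to pull out a factor of $|z_p|$, the only difference being the (immaterial) order in which the two bounds are applied.
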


\begin{proof} 
Let $p\in \TT(a,b)$ (see \eqref{e:defT} for the definition of $\TT(a,b)$).  Since $1\leq a < |z_p|$ and $\rho_p< b$, we have (recall \eqref{formula:norm1})
\begin{equation*}
\begin{split}
r_p^2 &\leq
\dfrac{ |z_p|  \rho_p^2  +\sqrt{ z_p^2 \rho_p^4+4\alpha^2 z_p^2}}{2\alpha^2} \\
&=
\dfrac{ \rho_p^2  +\sqrt{  \rho_p^4+4\alpha^2 } }{2\alpha^2} \,|z_p|  \\
&\leq
\dfrac{ b^2  +\sqrt{  b^4+4\alpha^2 } }{2\alpha^2}\, |z_p|  \, .
\end{split}
\end{equation*}
\end{proof}

For $t\in \R$ and $b>0$, we set (see  Figure \ref{fig3}(b)) 
$$\DD(t, b):=\{p\in \HH;\;  z_q=t  , \;\rho_p<b\}.$$

\begin{lemma}\label{lemma:z:axis}
There exists $a_1 \geq 1$ and $b_1 \in (0,1)$, depending only on $\al$, such that for all $a>a_1$ and all $b\in (0,b_1)$, we have 
$$\DD(t,b) \subset B(p,r_p)$$
for all $p\in \TT(a, b)$ and all $t\in [z_p,-1]$.
\end{lemma}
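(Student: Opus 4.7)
The plan is to follow the structural template of Lemma~\ref{lemma:x:axis}: verify the inclusion $\DD(t,b)\subset B(p,r_p)$ only at the two endpoints $t=-1$ and $t=z_p$, and then invoke Proposition~\ref{prop:ball_convex}. Indeed, $\DD(-1,b)$ and $\DD(z_p,b)$ are two Euclidean disks of the same radius $b$ in parallel horizontal planes, so their Euclidean convex hull is the solid cylinder $\{(x,y,z): x^2+y^2<b^2,\;z\in[z_p,-1]\}$, which contains $\DD(t,b)$ for every $t\in[z_p,-1]$. Once the endpoints are handled, convexity of $B(p,r_p)$ closes the argument.

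To check each endpoint inclusion, I will apply Lemma~\ref{A(p,q)} and show $A_p(q)\leq 0$ for $q\in\DD(t,b)$ with $t\in\{-1,z_p\}$ and $p\in\TT(a,b)$. For such $p,q$ we have $|x_p|,|y_p|,|x_q|,|y_q|<b$ and $z_p,z_q<0$, hence the cross-term $-2z_p(z_q-\tfrac12(x_py_q-x_qy_p))$ contains the crucial negative summand $-2z_pz_q=-2|z_p||z_q|$, while every other contribution to $A_p(q)$ is at worst of order $r_p^2 b^2$, $|z_p|b^2$, $|z_q|b^2$, $z_q^2$ or $b^4$. Using Lemma~\ref{lemma:TT} to replace $r_p^2$ by $C(\alpha)|z_p|$ with $C(\alpha)$ a constant depending only on $\alpha$ (since $b<1$), one arrives at a bound of the shape
\begin{equation*}
A_p(q)\;\leq\;|z_p|\bigl(-2|z_q|+K(\alpha)b^2\bigr)+z_q^2+2|z_q|b^2+b^4,
\end{equation*}
for some constant $K(\alpha)$.

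For $t=-1$, i.e.\ $|z_q|=1$, choosing $b_1$ so small that $K(\alpha)b_1^2<1$ makes the coefficient of $|z_p|$ at most $-1$, and then the remaining terms are bounded by a constant, so $A_p(q)\leq 0$ as soon as $|z_p|>a$ with $a$ larger than that constant. For $t=z_p$, i.e.\ $|z_q|=|z_p|$, the leading term becomes $-2|z_p|^2$ while the corrections grow only like $|z_p|b^2$, so for $b\in(0,b_1)$ and $|z_p|>a_1$ with $a_1$ taken sufficiently large in terms of $\alpha$ (and $b_1$), one again gets $A_p(q)\leq 0$. Both thresholds depend only on $\alpha$, which gives the required $a_1$ and $b_1$.

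The main obstacle is purely bookkeeping: one must extract the genuinely dominant term $-2z_pz_q$ from $A_p(q)$ cleanly, absorb the $-\frac12(x_py_q-x_qy_p)$ perturbation into harmless $O(|z_p|b^2)$ errors, and then check that a single choice of $b_1$ (small) followed by a single choice of $a_1$ (large, and depending on $b_1$ through $K(\alpha)$) works simultaneously for both endpoint inclusions. No new geometric idea beyond those used in Lemma~\ref{lemma:x:axis} is needed.
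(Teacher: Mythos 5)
Your proposal is correct and follows essentially the same route as the paper: bound $A_p(q)$ from Lemma~\ref{A(p,q)} at the two endpoints $t=-1$ and $t=z_p$ using Lemma~\ref{lemma:TT} and the smallness of the horizontal coordinates, with $-2z_pz_q$ as the dominant negative term, and then conclude for intermediate $t$ by Euclidean convexity of the ball (Proposition~\ref{prop:ball_convex}), exactly as in the paper's proof. The only small slip is in the $t=z_p$ case, where the ``corrections'' also include $z_q^2=z_p^2$, not just terms of order $|z_p|b^2$; but since $-2|z_p|^2+z_p^2=-|z_p|^2$ still dominates, your displayed bound yields $A_p(q)\leq 0$ for $|z_p|$ large, so the argument stands.
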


\begin{proof} Take $a\geq 1 > b$, $p\in \TT(a, b)$, $t<0$ and consider $q\in \DD(t,b)$. By Lemma~\ref{A(p,q)}, showing that $q\in B (p,r_p) $ is equivalent to prove that $A_p(q)$ is negative. Since $z_q=t$, we have
\begin{equation*}
\begin{split}
A_p(q)&= r^2_p
\left(x_q^2 +y_q^2 - 2 x_q x_p - 2 y_q y_p\right)
+
 \left(t   - \dfrac{x_p y_q - x_q y_p}{2}  \right  )^2
 -2 z_p\left(t   - \dfrac{x_p y_q - x_q y_p}{2}  \right  )
 \\
 &\leq 
 r^2_p
\left(x_q^2 +y_q^2 + 2 |x_q x_p| + 2|y_q y_p|\right)
+
 \left(|t|   +\dfrac{|x_p y_q| + |x_q y_p|}{2}  \right)^2\\
 &\phantom{fggh} -2 t z_p   +  |z_p|\left(|x_p y_q| +| x_q y_p|  \right).
 \end{split}
 \end{equation*}
Note that all terms in the last inequality are positive except   $-2 t z_p$, assuming both $t$ and $z_p$ negative. 
We bound using Lemma~\ref{lemma:TT} and using that the absolute value of each of the first two components of $p$ and $q$ is smaller than $b$,
\begin{equation*}
\begin{split}
A_p(q)&\leq 6 \,  \dfrac{b^2+\sqrt{b^4+4\alpha^2}}{2\alpha^2}
\,b^2|z_p|
+
 \left(|t|   +b^2  \right  )^2
 -2 t z_p   + 2b^2 |z_p|\\
&\leq  
 -  z_p 
+
 \left(|t|   +1  \right  )^2
 -2 t z_p 
 \, ,   
  \end{split}
  \end{equation*}
   where in the last inequality we assumed that $b$ is small enough, $b<b_1$ for some $b_1$ which depends only on $\alpha$.
   
   We consider now separately the case $t=-1$ and $t=z_p$. For $t=-1$, we need $  z_p 
+
 4
 \leq 0$ which is true as soon as $z_p \leq -4$. For     $t=z_p$, we need 
$ -  z_p 
+
 \left(-z_p    +1  \right  )^2
 -2  z_p^2 = - z_p^2 - 3 z_p +1 \leq 0 
$ 
which is true as soon as $|z_p|$ is large enough.

 All together we showed that one can find $a_1\geq 1$ and $b_1\in (0,1)$, depending only on $\alpha$, such that, for all $a>a_1$ and $b\in (0,b_1)$ and all $p\in \TT(a,b)$,  we have 
 $$\DD(-1,b) \subset B (p,r_p) \qquad 
 \text{ and }\quad  \DD(z_p,b) \subset B (p,r_p).$$

 Recall that the set 
 $B (p,r_p)$ is Euclidean convex by Proposition \ref{prop:ball_convex}.
Therefore we conclude the proof since  $\DD(t,b) $, for $ t\in [z_p,-1]$, is in the Euclidean convex hull of 
$\DD(-1,b)$ and $ \DD(z_p,b)$.
\end{proof}

\noindent \textbf{\textit{Proof of Lemma~\ref{lemma:z:axis0}.}}
Let $a_1\geq 1$ and  $b_1\in (0,1)$ be given by Lemma~\ref{lemma:z:axis}. Let $a>a_1$ and $b\in (0,b_1)$. Let $p\in\HH$ and $q \in \HH$ be such that
$p\notin  B(q, r_q)$ and $q\notin  B(p, r_p)$. Assume with no loss of generality that $z_p\leq z_q$. Then, if both $p$ and $q$ were in $\TT(a,b)$, by Lemma~\ref{lemma:z:axis} we would have
$q\in  \DD(z_q,b)\subset B(p,r_p)$ since
$ z_q\in [z_p,-1]$. But this would contradict the assumptions. \qed

\section{Proof of Lemma~\ref{lemma:comparisonincone}}
\label{section:prooflemma:comparisonincone}

This section is devoted to the proof of Lemma~\ref{lemma:comparisonincone}. We first fix some notations. For $z\in \R$, we set $p_z := (0,0,z)$. 

For $\theta \in (0,\pi / 2)$, $p\in\HH$ and $z\in\R$, let $\mathcal{C}(z,\pi(p),\theta)$ denote the two dimensional Euclidean half cone in $\HH \simeq \R^3$ contained in the plane $\{q\in\HH; \,z_q=z\}$ with vertex $p_z$, axis the half line starting at $p_z$ and passing through $(x_p,y_p,z)$ and aperture $2\theta$.
See Figure \ref{fig4}(a).

For $\theta \in (0,\pi / 2)$, $p\in\HH$ and $z\in\R$, let $\mathcal{Q}(z,\pi(p),\theta)$ denote the two dimensional Euclidean equilateral quadrilateral contained in the plane $\{q\in\HH; \,z_q=z\}$ with vertices  $p_z$, $p^+_\theta:=(x_p-y_p \tan \theta, y_p+x_p\tan \theta,z)$, $p^-_\theta:=(x_p+y_p \tan \theta, y_p-x_p\tan \theta,z)$ and $\check{p}_z := (2x_p,2y_p,z)$. Note that it is the Euclidean convex hull in $\HH \simeq \R^3$ of these four points.
See Figure \ref{fig4}(b).

   \begin{figure}
\centering
\subfigure[The  cone $\mathcal C (\theta, \pi(p), z)$ containing the quadrilateral  $\mathcal Q (\theta, \pi(p), z)$.] 
{
    \label{fig:C_thetha_p_z}
 \includegraphics[height=5cm]{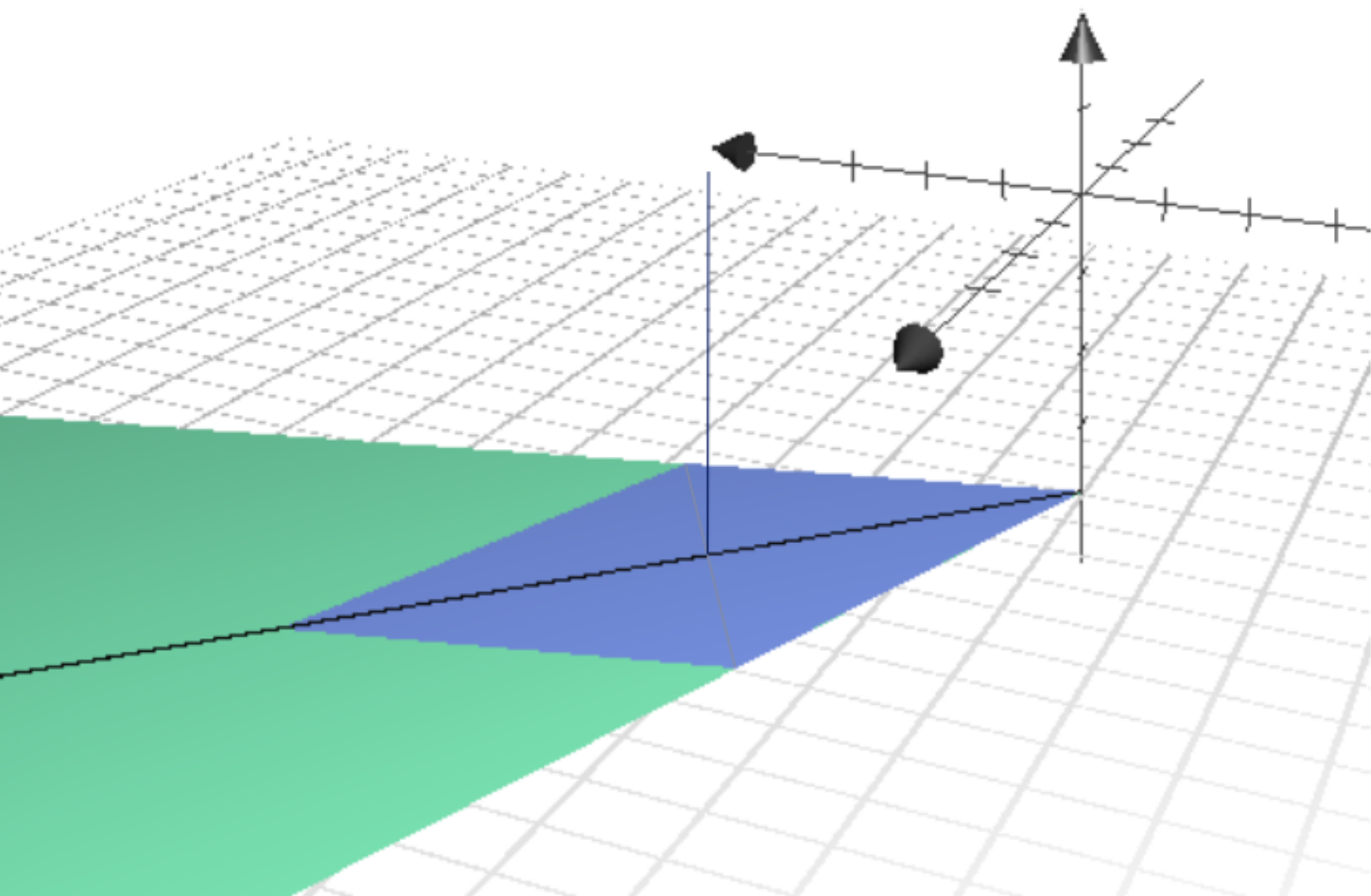}
             \put(-37,66){$(0,0,z)$}
        \put(-47,63.5){$.$}
             \put(-115,117){$p$}
        \put(-106.56,114){$.$}
                 \put(-94,46){$\theta$}
}
\subfigure[The quadrilateral  $\mathcal Q (\theta, \pi(p), z)$.] 
{
    \label{fig:Q}
\includegraphics[height=5.5cm]{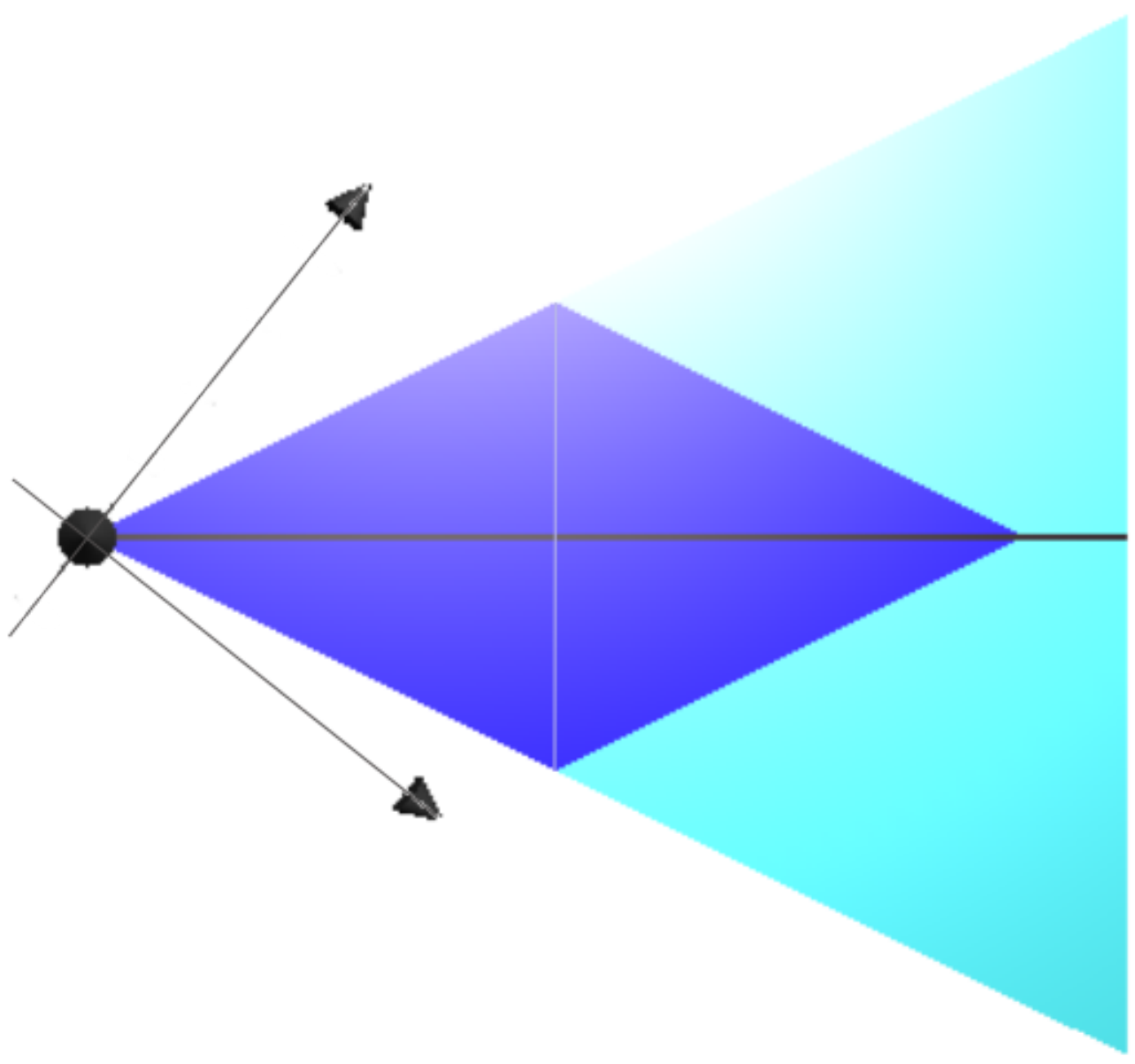}
             \put(-166,90){$(0,0,z)$}
                         \put(-15,86){$\hat p_z$} \put(-19,77){$.$}
                              
                               \put(-95,126){$p^+_\theta$}  \put(-86.5,111){$.$}
                                              \put(-104,84){$(x_p,y_p,z)$}          \put(-86.5,77){$.$}
                               \put(-95,29){$p^-_\theta$}  \put(-86.5,42.5){$.$}
}
\caption{The surfaces $\mathcal C (\theta, \pi(p), z)$ and $\mathcal Q (\theta, \pi(p), z)$.}
\label{fig4}
\end{figure}

Recall \eqref{e:defCtheta} for the definition of $\mathcal{C}(\th)$. Note that $q\in \mathcal{C}(\th)$ if and only if $(x_q,y_q,0) \in \mathcal{C}(0, \pi((1,0,0)),\theta)$.

We have the following properties,
\begin{equation} \label{e:prop1}
p\in \mathcal{C}(\theta) \text{ and } q\in \mathcal{C}(\theta) \Rightarrow q\in \mathcal{C}(z_q,\pi(p),2\theta)
\end{equation}
and
\begin{equation} \label{e:prop2} 
\mathcal{Q}(z,\pi(p),\theta) \subset \mathcal{C}(z,\pi(p),\theta).
\end{equation}

For $\theta \in (0,\pi / 4)$, we have 
\begin{equation} \label{e:prop4}
\mathcal{C}(z,\pi(p),\theta) \cap \{q\in\HH;\, \rho_q \cos \theta \leq \rho_p\} \subset \mathcal{Q}(z,\pi(p),\theta).
\end{equation}

This follows from elementary geometry noting that the angle between the half lines starting at $p^+_\theta$ and passing through $p_z$ and $\check{p}_z$ respectively is larger than $\pi/2$.

\begin{lemma}\label{lemma:sev1}
There exists $\theta_2\in (0,\pi /2)$, which depends only on $\al$, such that 
\begin{equation*}
\mathcal{Q}(z,\pi(p),\theta) \subset B(p,r_p)
\end{equation*}
for all $0<\theta \leq \theta_2$, all $p\in \HH\setminus\{0\}$ and all $z\in \R$ such that $\abs{z-z_p}\leq \abs{z_p}$.
\end{lemma}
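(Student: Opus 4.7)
The approach rests on two ingredients already in hand: Proposition~\ref{prop:ball_convex} (Euclidean convexity of $B(p,r_p)$) and Lemma~\ref{A(p,q)} (the characterization $q\in B(p,r_p) \iff A_p(q)\leq 0$). Since $\mathcal{Q}(z,\pi(p),\theta)$ is by definition the Euclidean convex hull of its four vertices $p_z$, $\check{p}_z$, $p^+_\theta$, $p^-_\theta$, it suffices to verify $A_p(q)\leq 0$ at each of these four points.

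The vertices $p_z = (0,0,z)$ and $\check{p}_z = (2x_p,2y_p,z)$ are handled by inspection: direct substitution shows that $x_q^2+y_q^2 - 2x_q x_p - 2 y_q y_p = 0$ and $x_p y_q - x_q y_p = 0$ at both points, so the formula for $A_p$ collapses to
\[
A_p(p_z) = A_p(\check{p}_z) = z^2 - 2 z_p z = z(z - 2 z_p).
\]
The hypothesis $|z-z_p| \leq |z_p|$ forces $z$ to have the same sign as $z_p$ (or to be zero) and to satisfy $|z|\leq 2|z_p|$, whence $z(z-2z_p)\leq 0$. Note that no smallness of $\theta$ is needed here, which is consistent with the claim that $\theta_2$ depends only on $\alpha$.

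The real work is at the lateral vertices. A short expansion using $\rho_p^2 = x_p^2 + y_p^2$ yields, at $p^{\pm}_\theta$,
\[
x_q^2+y_q^2 - 2 x_q x_p - 2 y_q y_p = \rho_p^2(\tan^2\theta - 1), \qquad x_p y_q - x_q y_p = \pm\,\rho_p^2 \tan\theta,
\]
so that, with $w := z \mp \tfrac{1}{2}\rho_p^2 \tan\theta$,
\[
A_p(p^{\pm}_\theta) = r_p^2 \rho_p^2 (\tan^2\theta - 1) + (w - z_p)^2 - z_p^2.
\]
The key algebraic move is to use the identity $\alpha^2 r_p^4 = z_p^2 + r_p^2 \rho_p^2$, immediate from \eqref{e:norm0'}, to rewrite $\alpha^2 r_p^4(1-\tan^2\theta)$ on the right-hand side, and to bound $(w-z_p)^2 \leq \bigl(|z_p| + \tfrac{1}{2}\rho_p^2 \tan\theta\bigr)^2$ via $|z - z_p|\leq |z_p|$.

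After cancellations and division by $\rho_p^2$ (the case $\rho_p = 0$ being trivial since then $p^{\pm}_\theta = p_z$), the inequality $A_p(p^{\pm}_\theta)\leq 0$ reduces to
\[
|z_p| \tan\theta + \tan^2\theta\left(\tfrac{\rho_p^2}{4} + r_p^2\right) \leq r_p^2.
\]
Invoking the pointwise bounds $|z_p|\leq \alpha r_p^2$ and $\rho_p^2 \leq \alpha^2 r_p^2$, both immediate from \eqref{e:norm0'}, this is implied by the purely numerical inequality
\[
\alpha \tan\theta + \tan^2\theta\left(\tfrac{\alpha^2}{4} + 1\right) \leq 1,
\]
which holds for every $\theta\in(0,\theta_2]$ with $\theta_2\in(0,\pi/2)$ depending only on $\alpha$. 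The only real obstacle is the bookkeeping in the lateral-vertex computation; there is no geometric subtlety beyond exploiting the $r_p$-identity to eliminate $z_p$ and $\rho_p$ in favour of $r_p$ and $\alpha$.
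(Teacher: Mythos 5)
Your proposal is correct and follows essentially the same route as the paper: Euclidean convexity (Proposition~\ref{prop:ball_convex}) reduces the claim to the four vertices, the vertices $p_z$ and $\check p_z$ are handled using only $\abs{z-z_p}\leq\abs{z_p}$, and at the lateral vertices the bounds $\rho_p^2\leq\alpha^2 r_p^2$ and $\abs{z_p}\leq\alpha r_p^2$ lead to exactly the same smallness condition $\alpha\tan\theta+\bigl(1+\tfrac{\alpha^2}{4}\bigr)\tan^2\theta\leq 1$ that the paper imposes to define $\theta_2$. The only cosmetic difference is that you test membership via $A_p(q)\leq 0$ from Lemma~\ref{A(p,q)} rather than verifying \eqref{e:dalpha1} directly, which is an equivalent reformulation.
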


\begin{proof} Recalling Proposition \ref{prop:ball_convex}, we only need to prove that the vertices $p_z$, $p^+_\theta$, $p^-_\theta$ and $\check{p}_z$ of $\mathcal{Q}(z,\pi(p),\theta)$ belong to $B(p,r_p)$.

We have $\abs{z-z_p}\leq \abs{z_p}$ and, recalling \eqref{e:norm0'} and \eqref{e:rp}, $$\frac{\rho_p^2}{r_p^2} + \frac{|z_p|^2}{r_p^4} = \alpha^2$$
hence $$\frac{\rho_p^2}{r_p^2} + \frac{|z-z_p|^2}{r_p^4} \leq \frac{\rho_p^2}{r_p^2} + \frac{|z_p|^2}{r_p^4} = \alpha^2$$ 
that is, recalling \eqref{e:dalpha1}, $p_z = (0,0,z)\in B(p,r_p)$.

Similarly we have 
\begin{equation*}
\frac{(2x_p-x_p)^2}{r_p^2} + \frac{(2y_p-y_p)^2}{r_p^2} + \frac{|z-z_p|^2}{r_p^4} =\frac{\rho_p^2}{r_p^2} + \frac{|z-z_p|^2}{r_p^4} \leq \alpha^2
\end{equation*}
hence $\check{p}_z =(2x_p,2y_p, z) \in B(p,r_p)$.

Next, let us prove that $p^+_\theta=(x_p-y_p \tan \theta, y_p+x_p\tan \theta,z)  \in B(p,r_p)$. Set $$\Delta := \frac{ \left(z-z_p- \displaystyle \frac{\rho_p^2\tan \theta}{2}\right)^2}{r_p^4} + \frac{\rho_p^2 \tan^2\theta}{r_p^2} .$$
We need to prove that $\Delta \leq \alpha^2$. We have
\begin{equation*}
\begin{split}
\Delta &=  \frac{(z-z_p)^2}{r_p^4} + \frac{\rho_p^4\tan^2 \theta}{4r_p^4} -  \frac{\rho_p^2 (z-z_p) \tan \theta}{r_p^4}+\frac{\rho_p^2 \tan^2\theta}{r_p^2}  \\
&\leq \frac{z_p^2}{r_p^4} + \frac{\rho_p^4\tan^2 \theta}{4r_p^4} + \frac{\rho_p^2 |z-z_p| \tan \theta}{r_p^4}+\frac{\rho_p^2 \tan^2\theta}{r_p^2}\\
&\leq \alpha^2 - \frac{\rho_p^2}{r_p^2} + \frac{\rho_p^2}{r_p^2} \, \left( \frac{\alpha^2 \tan^2\theta}{4} +\alpha \tan\theta + \tan^2 \theta\right)
\end{split}
\end{equation*}
where the last inequality follows from the fact that
 $$\frac{\rho_p^2}{r_p^2} + \frac{z_p^2}{r_p^4} = \alpha^2 $$
 which implies in particular that $$\frac{\rho_p^2}{r_p^2} \leq \alpha^2 \,\text{ and }\, \frac{|z-z_p|}{r_p^2}  \leq \alpha.$$
Hence we get that 
\begin{equation*}
\Delta \leq \alpha^2 - \frac{\rho_p^2}{r_p^2} \, \left(  1 - (1+\frac{\alpha^2}{4})\tan^2 \theta - \alpha \tan\theta \right).
\end{equation*}
Choosing $\theta_2 \in (0,\pi/2)$ small enough so that $$1 - (1+\frac{\alpha^2}{4})\tan^2 \theta - \alpha \tan\theta  \geq 0$$ for all $0<\theta \leq \theta_2$, we get the conclusion.

The fact that $p^-_\theta  \in B(p,r_p)$ is proved in a similar way.
\end{proof}

\noindent \textbf{\textit{Proof of Lemma~\ref{lemma:comparisonincone}}.} Let $\th_1 = \min (\th_2/2, \pi/8)$ where $\th_2$ is given by Lemma~\ref{lemma:sev1}. Let $\th \in (0,\th_1)$ and let $p\in\HH$ and $q\in\HH$ satisfying \eqref{e:c1}, \eqref{e:c2}, \eqref{e:c3} and \eqref{e:c4}.
 
Let us first prove \eqref{e:c5}. Assume by contradiction that $2 \, z_p \leq z_q \leq 0$. Then $\abs{z_q - z_p} \leq \abs{z_p}$. Hence $\mathcal{Q}(z_q,\pi(p),2\theta) \subset B(p,r_p)$ according to Lemma~\ref{lemma:sev1}. On the other hand, it follows from \eqref{e:c3}, \eqref{e:prop1}, \eqref{e:c2}, \eqref{e:prop4} that $q\in \mathcal{Q}(z_q,\pi(p),2\theta) $ and hence $q \in B(p,r_p)$ which contradicts \eqref{e:c4}.

Thus we have $z_q<2 \, z_p \leq z_p \leq 0$ and thus $\abs{z_p - z_q} \leq \abs{z_q}$. It follows from \eqref{e:c3}, \eqref{e:prop1} and \eqref{e:c4} that $p\in \mathcal{C}(z_p,\pi(q),2\theta) \setminus B(q,r_q)$. Finally we get from Lemma~\ref{lemma:sev1} that $p\in \mathcal{C}(z_p,\pi(q),2\theta) \setminus \mathcal{Q}(z_p,\pi(q),2\theta)$ and then \eqref{e:c6} follows from \eqref{e:prop4}.\qed

\section{Two criteria for distances for which BCP does not hold} \label{section:distancesw/obcp}

In this section we prove two criteria which imply the non-validity of BCP. This shows that in some sense our example of homogeneous distance $d_\alpha$ for which BCP holds is sharp. Roughly speaking the first criterion applies to homogeneous distances  whose unit sphere centered at the origin has either inward cone-like singularities in the Euclidean sense at the poles (i.e., at the intersection of the sphere with the $z$-axis) or is flat at the poles with $0$ curvature in the Euclidean sense. The second one applies to homogeneous distances whose unit sphere at the origin has outward cone-like singularities in the Euclidean sense at the poles. Note that the unit sphere centered at the origin of our distance $d_\alpha$ is smooth with positive curvature in the Euclidean sense.

\subsection{Distances with ingoing corners or second-order flat at the poles} \label{subsection:nobcp1}
Let $d$ be a homogeneous distance on $\HH$ and let $B$ denote the closed unit ball centered at the origin in $(\HH,d)$.

In this subsection we shall most of the time identify $\HH$ with $\R^3$ equipped with its usual differential structure.

For $p\in \HH$, $\vec{v} \in\R^3$, $\vec{v}\not=(0,0,0)$, and $\alpha \in (0,\pi/2)$, let  $\cone(p,\vec{v},\alpha)$ denote the Euclidean half-cone in $\HH$, identified with $\R^3$, with vertex $p$, axis $p+\R^+\vec{v}$ and opening $2\alpha$. 

We say that $\vec{v}\in\R^3$, $\vec{v}\not=(0,0,0)$, \textit{points out of $B$ at $p\in \partial B$} if there exists an open neighbourhood $U$ of $p$ and some $\alpha \in (0,\pi/2)$ such that 
\[ B \cap \cone(p,\vec{v},\alpha) \cap U = \{p\}.\]

Let $\tau_p$ denote the left translation defined by $\tau_p(q) := p\cdot q$. We  consider it as an affine map from $\HH$, identified with $\R^3$, to $\R^3$ whose differential, in the usual Euclidean sense in $\R^3$ is thus a constant linear map and  will be denoted by $(\tau_p)_*$. Let $\hat\pi$ be defined by $\hat\pi(x,y,z) := (x,y,0)$.

For $\vec{v} \in \R^3$, $\vec{v}\not=(0,0,0)$,  and $\epsilon>0$, let $\Omega(\vec{v})$ denote the set of points $q\in \partial B$ such that $(\tau_{q^{-1}})_*(\vec{v})$ points out of $B$ at $q^{-1}$ and let $\Omega_\eps(\vec{v})$ denote the set of points $q\in \Omega(\vec{v})$ such that $\hat \pi(q)\in\R^+ \vec{w}$ for some $\vec{w} \in \im(\hat \pi)$ such that $\|\vec{w} - \vec{v}\|_{\R^3} \leq \eps$ (here $\|\cdot\|_{\R^3}$ denotes the Euclidean norm in $\R^3$).

\begin{theorem} \label{thm:nobcpingoingcorners}
Assume that there exists $\vec{v} \in \im(\hat\pi)$, $\vec{v}\not=(0,0,0)$, and $\overline\eps>0$ such that $\Omega_\eps(\vec{v})\not = \emptyset$ for all $0< \eps \leq \overline\eps$. Then BCP does not hold in $(\HH,d)$.
\end{theorem}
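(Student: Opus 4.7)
The plan is to violate w-BCP and, through Characterization~\ref{characterization:equivalentbcp} (noting that $(\HH, d)$ is doubling), BCP itself; this I do by constructing, for each $N \in \N$, a Besicovitch family of cardinality $N$. Concretely, I will pick $q \in \Omega_\eps(\vec v)$ for $\eps$ sufficiently small, choose a small $\lambda \in (0, 1/2)$, and take $p_k := \delta_{\lambda^k}(q)$ with $r_k := \lambda^k$ for $k = 0, \ldots, N-1$. Each ball $B(p_k, r_k)$ contains $0$ on its boundary since $d(0, p_k) = \lambda^k$. The left-invariance and homogeneity of $d$ combined with the fact that $\delta_\lambda$ is a group automorphism yield, for $i \neq j$,
\[
d(p_i, p_j) = \lambda^{\min(i, j)} \, d\bigl(q, \delta_{\lambda^{|i-j|}}(q)\bigr),
\]
so that the Besicovitch condition $d(p_i, p_j) > \max(r_i, r_j)$ reduces to the single uniform inequality
\[
d\bigl(q, \delta_\nu(q)\bigr) > 1 \quad \text{for all } \nu \in (0, \lambda],
\]
which is the heart of the matter.

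To establish it, I will unfold the group product using \eqref{e:grouplaw}: a direct computation shows that $q^{-1} \cdot \delta_\nu(q) = q^{-1} + \delta_\nu(q)$ as a Euclidean sum in $\R^3$, since the bilinear corrections cancel. Setting $\gamma(\nu) := q^{-1} \cdot \delta_\nu(q)$, this is a smooth $\R^3$-valued curve with $\gamma(0) = q^{-1}$ and $\gamma'(0) = (x_q, y_q, 0) = (\hat\pi(q), 0)$, and the inequality amounts to $\gamma(\nu) \notin B$ for every $\nu \in (0, \lambda]$. The hypothesis provides that $(\tau_{q^{-1}})_*(\vec v) = (v_1, v_2, (v_1 y_q - v_2 x_q)/2)$ points out of $B$ at $q^{-1}$: there are $\alpha > 0$ and a neighborhood $U$ of $q^{-1}$ with $B \cap \cone(q^{-1}, (\tau_{q^{-1}})_*(\vec v), \alpha) \cap U = \{q^{-1}\}$. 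The pivotal observation is that the vertical term $v_1 y_q - v_2 x_q$ vanishes exactly when $\hat\pi(q) \in \R^+ \vec v$, so that in this ideal case $(\tau_{q^{-1}})_*(\vec v)$ becomes parallel to $\gamma'(0)$. For $q \in \Omega_\eps(\vec v)$, a short Euclidean estimate will bound the angle between these two vectors by $C \eps$, where $C$ depends only on an a priori bound for $|\hat\pi(q)|$ over the compact sphere $\partial B$.

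Once $C\eps < \alpha/2$, $\gamma'(0)$ lies strictly inside the outward cone at $q^{-1}$. Since $\gamma(\nu) - q^{-1} = \nu(\hat\pi(q), 0) + \nu^2(0, 0, z_q)$, the second-order vertical correction is negligible relative to the first-order horizontal step; hence for some $\nu_0 > 0$, $\gamma(\nu)$ lies inside the outward cone and inside $U$ for every $\nu \in (0, \nu_0]$, forcing $\gamma(\nu) \notin B$. Taking $\lambda := \min(\nu_0, 1/3)$ completes the construction and produces a Besicovitch family of size $N$; since $N$ is arbitrary, w-BCP fails.

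The main obstacle I foresee is reconciling the circular dependence: the cone opening $\alpha = \alpha(q)$ depends on the chosen $q$, which in turn depends on $\eps$ through the hypothesis. I expect this to be resolved by a compactness argument applied to a sequence $q_n \in \Omega_{1/n}(\vec v)$: using compactness of $\partial B$, I extract a convergent subsequence with limit $q_\ast$ satisfying $\hat\pi(q_\ast) \in \R^+\vec v$ (or degenerately $\hat\pi(q_\ast) = 0$), at which limit the two vectors become parallel; then a careful selection of $n$ along the sequence makes the required angular inequality $C\eps_n < \alpha(q_n)/2$ hold simultaneously with $q_n \in \Omega_{\eps_n}(\vec v)$, finishing the proof.
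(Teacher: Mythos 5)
There is a genuine gap, and it is not the circularity you flag at the end but the reduction itself: replacing the paper's family by the dilates $p_k=\delta_{\lambda^k}(q)$ of a \emph{single} point $q$ forces you to prove $d(q,\delta_\nu(q))>1$ for small $\nu$, and this inequality does not follow from the hypothesis --- in fact it is false for the main distances the theorem is meant to cover. Your own computation shows why: $q^{-1}\cdot\delta_\nu(q)=((\nu-1)x_q,(\nu-1)y_q,(\nu^2-1)z_q)$, so the tangent vector of the curve at $q^{-1}$ is $(\tau_{q^{-1}})_*(\hat\pi(q))=\hat\pi(q)$, a vector with \emph{zero} vertical component. The hypothesis only gives that $(\tau_{q^{-1}})_*(\vec v)$ points out of $B$ at $q^{-1}$, and for the relevant examples this is true precisely because of the extra vertical term $\tfrac12(x_{q^{-1}}v_2-y_{q^{-1}}v_1)$, which is nonzero only because $\hat\pi(q)$ is \emph{not} exactly parallel to $\vec v$; in the limit of exact alignment the vector becomes tangent to the flat top of the ball and the outward-cone condition fails. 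Concretely, for the box distance \eqref{e:box-dist} one gets, for every $q\in\partial B$ and every $\nu\in(0,1)$, $\rho_{q^{-1}\cdot\delta_\nu(q)}=(1-\nu)\rho_q<1$ and $2|z_{q^{-1}\cdot\delta_\nu(q)}|^{1/2}=2(1-\nu^2)^{1/2}|z_q|^{1/2}<1$, hence $d_\infty(q,\delta_\nu(q))<1$; the same happens for the Cygan--Kor\'anyi distances $d_{g,\alpha}$. So for these distances, which do satisfy the hypothesis, \emph{no} choice of $q$, $\lambda$, or compactness/limiting argument can rescue the single-point scheme: every pair of your balls violates the Besicovitch condition. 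Your proposed fix via a subsequence $q_n\in\Omega_{1/n}(\vec v)$ also cannot work, because the cone openings $\alpha(q_n)$ are not bounded below in terms of $\eps_n$, and even exact alignment in the limit is not enough, as just explained.

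The paper avoids this by using a sequence of \emph{distinct} points of the sphere: having chosen $q_0,\dots,q_n\in\Omega(\vec v)$, each with an openness margin $\eps(q_k)>0$ for the outward-pointing condition, one picks $q_{n+1}\in\Omega_{\eps}(\vec v)$ with $\eps\le\min(\eps(q_0),\dots,\eps(q_n),\overline\eps)$, so that $(\tau_{q_k^{-1}})_*(\hat\pi(q_{n+1}))$ points out of $B$ at $q_k^{-1}$ for all $k\le n$ --- here the translation by $q_k^{-1}\ne q_{n+1}^{-1}$ produces the needed nonzero vertical component. Then radii $r_n$ are chosen recursively decreasing so that $d(q_k,\delta_{r_n/r_k}(q_n))>1$ for all $k<n$, and $p_n=\delta_{r_n}(q_n)$ gives arbitrarily large Besicovitch families. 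Note that this recursive choice (new point after the finitely many old ones, with $\eps$ adapted to their margins) is also what dissolves the circular dependence you worried about; no compactness is needed. If you want to salvage your write-up, keep your curve computation and the observation that outward-pointing yields $d(q_k,\delta_\lambda(q_n))>1$ for small $\lambda$, but build the family from distinct $q_n$ chosen recursively as above rather than from dilates of one point.
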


\begin{proof} We first construct a sequence of points $(q_n)_{n\geq 0}$ in $\partial B$ such that $q_n \in \Omega (\vec{v})$ for all $n\geq 0$ and $(\tau_{q_k^{-1}})_*(\hat\pi(q_n))$ points out of $B$ at $q_k^{-1}$ for all $n\geq 1$ and all $0\leq k \leq n-1$. 

Note that if $q\in \Omega(\vec{v})$ then there exists $\eps(q)>0$ such that $(\tau_{q^{-1}})_*(\vec{v} + \vec{\eps})$ points out of $B$ at $q^{-1}$ for all $\vec{\eps} \in\mathbb{R}^3$ such that $\|\vec{\eps} \|_{\R^3} \leq \eps(q)$ (note that the set of vectors that points out of $B$ at some point $p\in \partial B$ is open).

Let us start choosing some  $q_0\in \Omega (\vec{v})$. By induction assume that $q_0,\dots,q_n$ have already been chosen. Let $\eps = \min (\eps(q_0),\dots,\eps(q_n),\overline \eps)$ where each $\eps(q_k)$ is associated to $q_k \in \Omega(\vec{v})$ as above. Then we choose $q_{n+1}\in \Omega_\eps(\vec{v})$. We have $\hat\pi(q_{n+1})= \lambda (\vec{v} + \vec{\eps})$ for some $\lambda >0$ and some $\vec{\eps} \in \im(\hat\pi)$ such that $\|\vec{\eps} \|_{\R^3} \leq \eps$. Hence, by choice of $\eps$ and of the $q_k$'s, we have that  $(\tau_{q_k^{-1}})_*(\hat\pi(q_{n+1})) = \lambda\, (\tau_{q_k^{-1}})_*(\vec{v} + \vec{\eps})$ points out of $B$ at $q_k^{-1}$ for all $0\leq k \leq n$ as wanted.

Next, we claim that if $q\in\partial B$, $q'\in\partial B$ are such that $\hat\pi(q')\not= (0,0,0)$ and that $(\tau_{q^{-1}})_*(\hat\pi(q'))$ points out of $B$ at $q^{-1}$, then there exists $\overline \lambda>0$ such that $d(q,\delta_\lambda(q'))>1$ for all $0<\lambda\leq \overline\lambda$. Indeed the curve $\lambda \in [0,+\infty) \mapsto q^{-1}\cdot \delta_\lambda(q')$ is a smooth curve starting at $q^{-1}$ and whose tangent vector at $\lambda = 0$ is given by $(\tau_{q^{-1}})_*(\hat\pi(q'))$. Since this vector points out of $B$ at $q^{-1}$, it follows that $q^{-1}\cdot \delta_\lambda(q') \not\in B$ for all $\lambda>0$ small enough and hence $d(q,\delta_\lambda(q')) = d(0,q^{-1}\cdot \delta_\lambda(q'))>1$ as wanted. 

Then it follows that for all $n\geq 1$, one can find $\lambda_n>0$ such that for all $0< \lambda \leq \lambda_n$ and all $0\leq k<n$, one has 
\[ d(q_k,\delta_\lambda(q_n))>1. \]

Then we set $r_0=1$ and by induction it follows that we can construct a decreasing sequence $(r_n)_{n\geq 0}$ so that 
\[d(q_k,\delta_\frac{r_n}{r_k}(q_n))>1
 \]
for all $n\geq 1$ and all $0\leq k<n$. For $n\geq 0$, we set $p_n = \delta_{r_n}(q_n)$. By construction we have
\[
 d(p_k,p_n) > \max(r_k,r_n)
\]
for all $k\geq 0$ and $n\geq 0$ such that $k\not= n$. It follows that $\{B_d(p_n,r_n);\; n\in J\}$ is a family of Besicovitch balls for any finite set $J\subset \N$ and hence BCP does not hold.
\end{proof}

Let us give some examples of homogeneous distances for which the criterion given in Theorem~\ref{thm:nobcpingoingcorners} applies.

A first class of examples is given by rotationally invariant homogeneous distances $d$ that satisfy that there exists $p\in\partial B$ such that $(x_p,y_p)\not=(0,0)$ and such that $$z_p =  \max \{z>0;\, (x,y,z)\in\partial B \text{ for some } (x,y)\in\R^2 \}.$$
By rotationally invariant distances, we mean distances for which rotations $\operatorname{R}_\theta$, $\theta\in\R$, are isometries (see \eqref{e:rotations} for the definition of $\operatorname{R}_\theta$).

Indeed, consider $\vec{v}=(1,0,0)$ and, for $\varepsilon>0$, set $$\lambda=\left(\frac{x_p^2+y_p^2}{1+\varepsilon^2}\right)^{1/2}.$$
Then consider $q=(\lambda , \lambda \varepsilon,-z_p)$. By rotational and left invariance (which implies in particular that $d(0,q) = d(0,q^{-1})$ for all $q\in\HH$), one has $q\in\partial B$. On the other hand, since $\{(x,y,z)\in\HH;\, z>z_p\} \cap B =\emptyset$, any vector with a positive third coordinate points out of $B$ at $q^{-1}$. In particular $(\tau_{q^{-1}})_*(\vec{v})=(1,0,\lambda\varepsilon/2)$ points out of $B$ at $q^{-1}$. Hence $q\in \Omega_\eps(\vec{v})$.

This class of examples includes the so-called box-distance $d_\infty$ defined by $d_\infty(p,q) := \|p^{-1}\cdot q\|_\infty$ with 
\begin{equation} \label{e:box-dist}
\|p\|_\infty := \max((x_p^2+y_p^2)^{1/2}, 2\,|z_p|^{1/2})
\end{equation}
for which the fact that BCP does not hold was not known. It also includes the Carnot-Carath\'eodory distance and hence this gives a new proof of the non-validity of BCP for this distance. See \cite{Rigot} for a previous and different proof.  

Other examples of homogeneous distances $d$ for which the criterion given in Theorem~\ref{thm:nobcpingoingcorners} applies can be obtained in the following way. Assume that $B$, respectively $\partial B$, can be described as $\{q\in\HH;\; f(q)\leq 0\}$, respectively $ \{q\in\HH;\; f(q)=0\}$, for some $C^1$ real valued function $f$ in a neighbourhood of a point $p\in \partial B$. Then the outward normal to $\partial B$ at some point $q\in\partial B$ is given in a neighbourhood of $p$ by $\nabla f(q)$ (here it is still understood that we identify $\HH$ with $\R^3$ and $\nabla$ denotes the usual gradient in $\R^3$). Then Theorem~\ref{thm:nobcpingoingcorners} applies if one can find a vector $\vec{v} \in \im(\hat \pi)$, $\vec{v}\not=(0,0,0)$, such that for all $\varepsilon$ small enough, the following holds. There exists $q\in \partial B$ such that $\hat \pi(q)\in\R^+ \vec{w}$ for some $\vec{w} \in \im(\hat \pi)$ such that $\|\vec{w} - \vec{v}\|_{\R^3} \leq \eps$ and such that $q^{-1}$ lies in a neighbourhood of $p$ and 
$$\langle \nabla f (q^{-1}), (\tau_{q^{-1}})_*(\vec{v})\rangle >0$$
where $\langle \cdot,\cdot \rangle $ denotes the usual scalar product in $\R^3$.

A particular example is given when $B$, respectively $\partial B$, can be described near the north pole (intersection of $\partial B$ with the positive $z$-axis) as the subgraph $\{(x,y,z)\in\HH;\; z\leq \varphi(x,y)\}$, respectively the graph $\{(x,y,z)\in\HH;\; z= \varphi(x,y)\}$, of a $C^2$ function $\varphi$ whose first and second order partial derivatives vanish at the origin. Indeed, in that case one can choose for example $\vec{v} = (1,0,0)$ and for a fixed $\eps>0$, one looks for some $q\in \Omega_\eps(\vec{v})$ of the form $q=(\lambda, \lambda \eps, -\varphi(-\lambda,-\lambda\eps))$ for some $\lambda>0$. Then $q^{-1} = (-\lambda, -\lambda \eps, \varphi(-\lambda,-\lambda\eps))\in \partial B$ lies near the north pole for $\lambda>0$ small and we have
\begin{equation*}
\langle \nabla f (q^{-1}), (\tau_{q^{-1}})_*(\vec{v})\rangle = -\partial_x\varphi(-\lambda,-\lambda\eps) + \frac{1}{2} \lambda \eps\, 
\end{equation*}
that is equivalent to $\lambda \eps/2>0$ when $\lambda>0$ is small enough. Hence $\Omega_\eps(\vec{v})\not= \emptyset$.

This argument applies to the Cygan-Kor\'anyi distance $d_{g,2}$, and more generally to $d_{g,\alpha}$ for all values of $\alpha>0$ such that $d_{g,\alpha}$ defines a distance, thus in particular for all values of $\alpha\leq 2$. Recall from \eqref{e:n-korany-norm} that $d_{g,\alpha}(p,q) := \|p^{-1}\cdot q\|_{g,\alpha}$ where $$\|p\|_{g,\alpha}:= \left((x_p^2+y_p^2)^2+4\alpha^2\, z_p^2\right)^{1/4} $$
and that $d_{g,2}$ is the Cygan-Kor\'anyi distance. Hence Theorem~\ref{thm:nobcpingoingcorners} gives in particular a new geometric proof of the fact that BCP does not hold for the Cygan-Kor\'anyi distance on $\HH$, see \cite{KoranyiReimann} and \cite{SawyerWheeden} for previous analytic proofs.

\subsection{Distances with outgoing corners at the poles}

Let $d$ be a homogeneous distance on $\HH$ and let $B$ denote the closed unit ball centered at the origin in $(\HH,d)$. Set  $S^+:=\partial B \cap \{p\in \HH ; \, z_p>0\}$.

\begin{theorem} \label{thm:nobcpoutgoingcorners}
Assume that there exists two sequences of points $p_n^+\in S^+$ and $p_n^-\in S^+$ and some $a>0$ and $\overline x>0$ such that 
\begin{gather*}
\label{P1} p_n^-=(x_n^-, 0 , z_n^-), \quad p_n^+=(x_n^+, 0 , z_n^+),\\
\label{P2} x_n^-<0<x_n^+ , \\
\label{P3} \lim_{n\rightarrow 0} x_n^+ - x_n^- = 0,\\
\label{P4} z_n^- > z_n^+ >0, \\
\label{PP0} z_n^+- z_n^- < -a \,(x_n^+ - x_n^-) ,
\\
\label{P5} \{p\in \HH; \, x_n^+ \leq x_p \leq \overline x,\, y_p=0 , \,z_p>z_n^+\} \subset \HH \setminus B.
\end{gather*}
Then BCP does not hold in $(\HH,d)$.
\end{theorem}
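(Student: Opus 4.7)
The plan is to mirror the scheme used in the proof of Theorem~\ref{thm:nobcpingoingcorners}: for each $N$ produce a family of $N$ Besicovitch balls in $(\HH,d)$, all containing the origin, and then invoke the doubling property together with Characterization~\ref{characterization:equivalentbcp} to conclude that BCP fails. The common point of the family will be $0$, arranged by taking each center to lie at distance equal to its own radius from the origin.

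The geometric content of the five hypotheses is that, in the half-plane $\{y_p=0\}$, the boundary $\partial B$ has two sequences of points $p_n^\pm$ converging to a common point on the positive $z$-axis, with $p_n^-$ lying strictly above $p_n^+$ and with the chord joining them having slope less than $-a$. The last hypothesis supplies an open exterior ``ridge'' region of $\HH\setminus B$ sitting directly above $p_n^+$ along the positive $x$-direction up to $\overline x$. The strategy is to use these ridges, after dilation by arbitrarily small factors, to produce rescaled group products that can be certified to lie outside $B$ by a purely combinatorial check against the fifth hypothesis rather than by an analytic estimate on $d$, which is what allows the argument to proceed without any regularity assumption beyond homogeneity.

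Concretely, I would set $P_j := \delta_{\lambda_j}(p_{n_j}^+)$ and possibly alternate with dilates of $p_{n_j}^-$ or of their reflections $\operatorname{R}(p_{n_j}^\pm)$ via the isometry \eqref{e:reflection}. Each such $P_j$ satisfies $d(0,P_j)=\lambda_j$. By left-invariance the Besicovitch condition reduces to showing that, for $i\neq j$,
\[
\delta_{1/\lambda_i}\bigl(P_i^{-1}\cdot P_j\bigr)\notin B.
\]
Since both $p_{n_j}^+$ and $p_{n_i}^+$ have vanishing $y$-coordinate, the group law yields, for this rescaled product, first coordinate $(\lambda_j/\lambda_i)x_{n_j}^+-x_{n_i}^+$ and third coordinate $(\lambda_j/\lambda_i)^2z_{n_j}^+-z_{n_i}^+$, the second coordinate remaining zero. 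The slope condition $z_n^+-z_n^-<-a(x_n^+-x_n^-)$ and the convergence $x_n^+-x_n^-\to 0$ together allow me to choose, inductively, $n_{j+1}$ very large and then $\lambda_{j+1}$ very small so that these coordinates fall inside the exterior ridge region $\{x_p\in[x_{n_i}^+,\overline x],\;y_p=0,\;z_p>z_{n_i}^+\}$ for every $i\le j$, and symmetrically when the roles of $i,j$ are swapped (this is where switching between $p^+$ and $p^-$ and their reflections is used, to treat the $\lambda_{j+1}/\lambda_i$-small and $\lambda_i/\lambda_{j+1}$-small regimes).

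The main obstacle will be the two-parameter bookkeeping of this induction: at step $j+1$ one must realize $2j$ strict inequalities with a single pair $(n_{j+1},\lambda_{j+1})$. The slope condition is precisely what guarantees that the admissible window in $\lambda_{j+1}$ is nonempty once $n_{j+1}$ is large enough; the ridge hypothesis with the uniform upper bound $\overline x$ is what certifies membership in $\HH\setminus B$ without any further information about $d$. Once the family $\{B_d(P_j,\lambda_j)\}_{j=1}^N$ is produced for arbitrary $N$, the conclusion follows from Characterization~\ref{characterization:equivalentbcp}.
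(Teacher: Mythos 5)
Your overall strategy (arbitrarily large families of Besicovitch balls with common point $0$, centers of the form $\delta_\lambda(w)$ with $w\in\partial B$ near the pole, membership outside the other balls certified after renormalization by the exterior ``ridge'' hypothesis) is indeed the paper's strategy, but the concrete set-up you propose does not work, and the place where it breaks is precisely the step you defer as ``bookkeeping''. With centers $P_j=\delta_{\lambda_j}(p_{n_j}^+)$ and $\lambda_{j+1}$ chosen very small, the renormalized difference you compute, $\bigl((\lambda_j/\lambda_i)x_{n_j}^+-x_{n_i}^+,\,0,\,(\lambda_j/\lambda_i)^2 z_{n_j}^+-z_{n_i}^+\bigr)$, has, in the regime $\lambda_j/\lambda_i\to 0$, both first and third coordinates negative: it is an arbitrarily small perturbation of $(p_{n_i}^+)^{-1}$, a boundary point near the \emph{south} pole, about which the hypotheses say nothing. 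To push it instead into the ridge $\{x_{n_i}^+\le x_p\le\overline x,\;y_p=0,\;z_p>z_{n_i}^+\}$ you would need $(\lambda_j/\lambda_i)^2 z_{n_j}^+>2z_{n_i}^+$, and since the $z_n^\pm$ converge to the height of the north pole (not to $0$), this forces $\lambda_j/\lambda_i$ bounded below by roughly $\sqrt2$, contradicting ``$\lambda_{j+1}$ very small''. More structurally: the exterior certificate furnished by the hypothesis lives at distance at least the chord width $x_{n}^+-x_{n}^-$ from the sphere point it is attached to, so it can never be reached by perturbations that are small relative to the \emph{old} ball; your ordering (later balls smaller) puts you exactly in that hopeless regime. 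Also, switching to the reflections $\operatorname{R}(p_{n_j}^\pm)$ is not legitimate here: \eqref{e:reflection} is shown to be an isometry only for $d_\alpha$, and for a general homogeneous distance $d$ the points $\operatorname{R}(p_n^\pm)$ need not lie on $\partial B$ (group inversion $p\mapsto p^{-1}$ is the symmetry that is always available).

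The missing idea, which is the heart of the paper's proof, is to take the centers to be dilates of the group inverses of $p_n^-$ with \emph{increasing} radii: $q_{k+1}:=\delta_{r_{k+1}}(p_n^-)^{-1}$ with $r_{k+1}=x_k/(x_n^+-x_n^-)$, where $q_k=(x_k,0,z_k)$ are the previously constructed centers (which live in the quadrant $x>0$, $z<0$). Since the radii increase, only one exclusion per pair must be checked, namely that the old points avoid the new big ball; after renormalization by that ball, an old point $q_l$ becomes $p_n^-\cdot\delta_{1/r_{k+1}}(q_l)$, i.e.\ the sphere point $p_n^-$ pushed to the right by $x_l/r_{k+1}\ge x_n^+-x_n^-$ and downward by $|z_l|/r_{k+1}^2$, which the slope condition $z_n^+-z_n^-<-a(x_n^+-x_n^-)$ (together with choosing $n$ large, which is where $x_n^+-x_n^-\to 0$ and the bound $\overline x$ are used) keeps strictly above $z_n^+-z_n^-$; hence the renormalized point lands in the ridge and lies outside $B$. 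In your scheme the required push is measured at the \emph{old} index and is of fixed size, so no choice of $(n_{j+1},\lambda_{j+1})$ can realize it; in the paper's scheme it is measured at the \emph{new} index, whose chord width can be made as small as needed. As stated, your proposal therefore has a genuine gap and cannot be completed without changing the choice of centers and the ordering of the radii.
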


The geometric meaning of the above assumptions is the following.
In some vertical plane (here we take the $xz$-plane for simplicity)
one can find two sequences of points $p_n^+$ and $p_n^-$, each one of them on a different side of the $z$-axis.
Such points are on the unit sphere centered at the origin and are converging to the north pole.
The slope between   $p_n^-$ and $p_n^+$  is assumed to be bounded away from zero.
We further assume that 
at the north pole the intersection of the sphere and the $xz$-plane can be written both as graph $x=x(z)$ and $z=z(x)$. 
See Figure \ref{out:cone}.

\begin{figure}
{
    {\includegraphics[height=4cm]{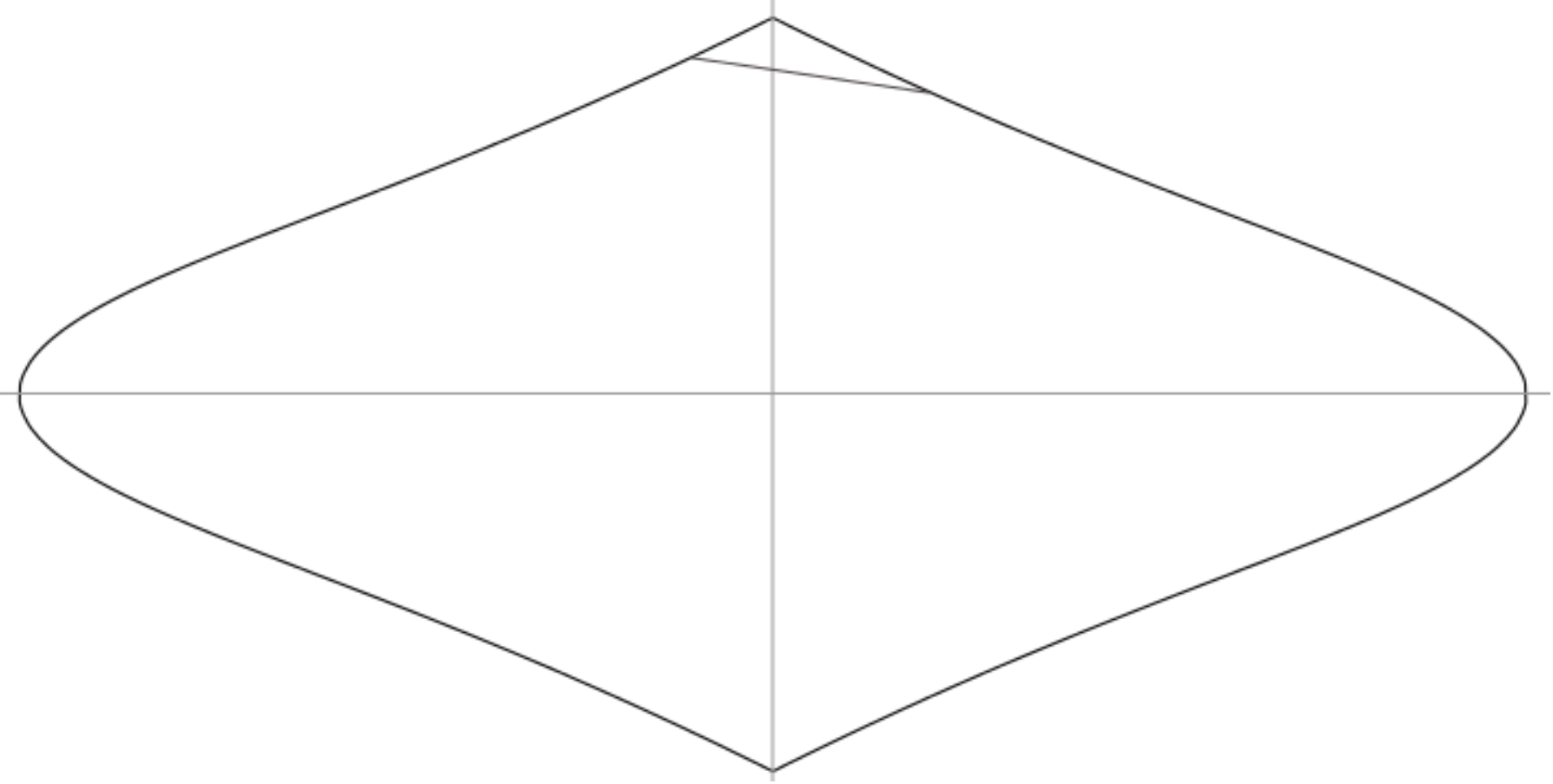}}} 
  \put(-86,102){$p_n^+$}
    \put(-142,109.4){$p_n^-$}
    \caption{
    Intersection of the  $xz$-plane  and the unit sphere at the origin
     of the distance 
     $d_{\kappa,\alpha}$ when $\kappa=1$ and $\alpha=2$.}
	\label{out:cone}
\end{figure}

Theorem~\ref{thm:nobcpoutgoingcorners} applies in particular if
 the intersection of   $B$  with the $xz$-plane
can be described near the north pole as $ \{p\in\HH;\, -\varepsilon<x_p<\varepsilon,\;\, y_p=0,\; 0<z_p\leq f(x_p)\}$ for some function $f$ of class $C^1$ on $(-\varepsilon,\varepsilon)\setminus \{0\}$ such that $f'(0^-)$ and $f'(0^+)$ exist and are finite with $f'(0^+)<0$. This is for instance the case of the following distances built from the Cygan-Kor\'anyi distance, and more generally from the distances $d_{g,\alpha}$, and given by $d_{\kappa,\alpha}(p,q) := \|p^{-1}\cdot q\|_{\kappa,\alpha}$ with $$\|p\|_{\kappa,\alpha} := \kappa\, \rho(p) + \|p\|_{g,\alpha} $$
for some $\kappa>0$. See \eqref{e:n-korany-norm} for the definition of $\rho(\cdot)$ and $\|\cdot\|_{g,\alpha} $.
Figure \ref{out:cone} is exactly the intersection of the  $xz$-plane  and the unit sphere at the origin when $\kappa=1$ and $\alpha=2$.

Note that it follows in particular that the $l^1$-sum of the pseudo-distance $d_\rho$ with the distance $d_{g,\alpha}$ does not satisfy BCP in contrast with their $l^2$-sum which is a multiple of the distance $d_\alpha$.

\medskip

\textit{\textbf{Proof of Theorem~\ref{thm:nobcpoutgoingcorners}.}}
By induction, we construct a sequence of points $q_k=(x_k,0,z_k)$ such that
\begin{equation*}  
z_{k+1}<z_k<0<x_{k+1}<x_k \qquad \text{and} \qquad r_{k+1}>r_k
\end{equation*}
for all $k\in\N$, where $r_k = d(0,q_k)$, and such that 
\begin{equation*} 
q_l \not \in B_d(q_{k+1},r_{k+1}) 
\end{equation*}
for all $k\in\N$ and all $0\leq l \leq k$.

Then, we will have $d(q_l,q_k) > \max(r_l,r_k)$ for all $l\in\N$ and $k\in\N$ such that $l\not= k$, so that $\{B_d(q_k,r_k);\, k\in J\}$ is a family of Besicovitch balls for any finite set $J\subset \N$. Hence BCP does not hold.

We start from a point $q_0=(x_0,0,z_0)$ with $z_0<0<x_0$. Next assume that $q_0,\cdots,q_k$ have been constructed and choose $n$ large enough so that 
\begin{gather}
r_k < \frac{x_k}{x_{n}^+ - x_{n}^-}\, , \label{e:constructionqk1}\\
-a < \frac{(x_{n}^+ - x_{n}^-) }{x_k^2} \, z_k <0 \label{e:constructionqk2}\\
\intertext{and}
x_0  \leq \frac{x_k}{x_{n}^+ - x_{n}^-}\, \overline{x} \, . \label{e:constructionqk3}
\end{gather}

We set 
\begin{equation} \label{e:constructionqk}
r_{k+1}:= \frac{x_k}{x_{n}^+ - x_{n}^-} \qquad \text{and} \qquad q_{k+1} := \delta_{r_{k+1}}(p_n^-)^{-1}.
\end{equation}
Note that $d(0,q_{k+1})=r_{k+1}$ since $p_n^- \in \partial B$. We have $r_{k+1} > r_k$ by choice of $n$ (see \eqref{e:constructionqk1}). We also have $$x_{k+1} = -r_{k+1} x_n^- = \frac{-x_{n}^-}{x_{n}^+ - x_{n}^-} \, x_k < x_k\,.$$
Hence it remains to check that $z_{k+1} < z_{k}$ and that $q_l\not \in B(q_{k+1},r_{k+1})$ for $0\leq l \leq k$.

Using dilation, left translation and the assumption $\{p\in \HH;\, x_n^+ \leq x_p \leq \overline x,\, y_p=0 , \,z_p>z_n^+\} \subset \HH \setminus B$, it follows that 
\begin{equation*}
\{p\in\HH;\, x_k \leq x_p \leq r_{k+1} \overline{x} - r_{k+1} x_n^-,\, y_p = 0,\, z_p > z_{k+1} + r_{k+1}^2 z_n^+\} \subset \HH \setminus B(q_{k+1},r_{k+1}).
\end{equation*}
Hence, taking into account the fact that $z_k<\cdots<z_0$ and that $x_k<\cdots<x_0$, to prove that $z_{k+1} < z_{k}$ and that $q_l\not \in B(q_{k+1},r_{k+1})$ for $0\leq l \leq k$, we only need to check that $x_0 \leq r_{k+1} \overline{x} - r_{k+1} x_n^-$, which follows from \eqref{e:constructionqk3}, and that $z_k > z_{k+1} + r_{k+1}^2 z_n^+$. Using the fact that $z_n^+- z_n^- < -a \,(x_n^+ - x_n^-)$, \eqref{e:constructionqk2} and \eqref{e:constructionqk}, we have
\begin{equation*}
\begin{split}
z_{k+1} + r_{k+1}^2 z_n^+ &= r_{k+1}^2 \, (z_n^+ - z_n^-) \\
&< -a \, (x_n^+ - x_n^-) \, r_{k+1}^2 \\
&< \frac{(x_{n}^+ - x_{n}^-)^2\, z_k}{x_k^2} \cdot \frac{x_k^2}{(x_{n}^+ - x_{n}^-)^2} = z_k
\end{split}
\end{equation*}
which gives the conclusion.
\qed

\section{Generalization to any Heisenberg group $\HH^n$} \label{section:Hn}

The case of $\HH^n$ for $n\geq 1$ arbitrary can be easily handled similarly to the case of $\HH$ adopting the following convention. For $p\in\HH^n$, we set $p=(x_p,y_p,z_p)$ where $x_p\in\R$, $y_p\in\R^{2n-1}$ and $z_p\in\R$. Note that this is different from the more standard presentation adopted in the introduction (Section~\ref{section:introduction}). To avoid any confusion, the explicit correspondance between theses two conventions is the following. If $x=(x_1,\cdots,x_n)\in\R^n$, $y=(y_1,\cdots,y_n)\in\R^n$ and $z\in\R$ denote the exponential and homogeneous coordinates of $p\in \HH^n$ as in \eqref{e:grouplaw}, by denoting $p=(x_p,y_p,z_p)$ with $x_p\in\R$, $y_p\in\R^{2n-1}$ and $z_p\in\R$, we mean  $x_p = x_1$, $y_p=(x_2,\cdots,x_n,y_1,\cdots,y_n)$ and $z_p=z$. It follows that $y_p^2$ should be replaced by $\|y_p\|_{\R^{2n-1}}^2$ and $|y_p|$ by $\|y_p\|_{\R^{2n-1}}$ where $\|\cdot\|_{\R^{2n-1}}$ denotes the Euclidean norm in $\R^{2n-1}$.

In particular, we get
\begin{equation*} 
\rho_p = \sqrt{x_p^2+ \|y_p\|_{\R^{2n-1}}^2}
\end{equation*}
and setting
\begin{equation*} 
\PP(a,b,\theta):=\{p\in \HH^n; \; x_p>a, \; |z_p|<b, \; \|y_p\|_{\R^{2n-1}} <x_p \tan \theta \}
\end{equation*}
and 
\begin{equation*} 
\TT(a,b):=\{p\in \HH^n ; \; z_p<-a, \;\rho_p<b\},
\end{equation*}
one can easily check that Lemma~\ref{lemma:x:axis0} and Lemma~\ref{lemma:z:axis0} hold true in $\HH^n$ with essentially the same proofs.

Lemma~\ref{lemma:comparisonincone} and its proof extend to the case of $\HH^n$ setting
\begin{equation*} 
\mathcal{C}(\theta) := \{p\in\HH^n;\,  \|y_p\|_{\R^{2n-1}}<  x_p \tan \theta \}
\end{equation*} 
and considering the analogue of the sets $\mathcal{C}(z,\pi(p),\theta)$ and $\mathcal{Q}(z,\pi(p),\theta)$ (introduced in Section~\ref{section:prooflemma:comparisonincone}) defined in the following way. 

The set $\mathcal{C}(z,\pi(p),\theta)$ is now defined as the $(2n)$-dimensional Euclidean half cone contained in the hyperplane $\{q\in\HH^n;\; z_q=z\}$ with vertex $p_z=(0,0,z)$, axis the half line starting at $p_z$ and passing through $(x_p,y_p,z)$ and aperture $2\theta$.

The set $\mathcal{Q}(z,\pi(p),\theta)$ is defined as the $(2n)$-dimensional Euclidean convex hull in the hyperplane $\{q\in\HH^n;\; z_q=z\}$ of $p_z$, $\check{p}_z = (2x_p,2y_p,z)$ and the $(2n-1)$-dimensional Euclidean ball $\{q\in\HH^n;\; z_q=z,\; \langle \pi(q)-\pi(p),\pi(p)\rangle_{\R^{2n}} = 0,\; \|\pi(q)-\pi(p)\|_{\R^{2n}} = \rho_p \tan \theta\}$. Here $\pi$ denotes the obvious analogue of the map defined in \eqref{e:projection}, $\pi:\HH^n \rightarrow \R^{2n}$, $\pi(x_p,y_p,z_p) := (x_p,y_p)$.

\section{A general construction giving bi-Lipschitz equivalent distances without BCP} \label{section:destroybcp}

This section is devoted to the proof of Theorem~\ref{thm:destroybcp}. The construction is inspired by the construction given by the first-named author in Theorem 1.6 of \cite{ledonne} where it is proved that there exist translation-invariant distances on $\R$ that are bi-Lipschitz equivalent to the Euclidean distance but that do not satisfy BCP.

\begin{proof}[\textbf{Proof of Theorem~\ref{thm:destroybcp}}] Let $(M,d)$ be a metric space. Assume that $\overline{x}$ is an accumulation point in $(M,d)$ and let  $(x_n)_{n\geq 1}$ be a sequence of distinct points in $M$ such that $x_n\not= \overline{x}$ for all $n\geq 1$ and such that $\lim_{n\rightarrow +\infty} d(x_n,\overline{x})=0$. Set $$\rho_n := \displaystyle \frac{n}{n+1} \, d(x_n,\overline{x})\,.$$

Up to a subsequence, one can assume with no loss of generality that the sequence $(\rho_n)_{n\geq 1}$ is decreasing.

Let $0<c<1$ be fixed and $n_0 \in \N$ be fixed large enough so that 
\begin{equation}\label{eq:c}
c\, (n_0+1) < n_0.
\end{equation}

Set
$$\theta(x,y) := \begin{cases} \rho_n \,\,\,\,\qquad \text{ if } \{x,y\}=\{x_n,\overline{x}\} \text{ for some } n\geq n_0\\
d(x,y) \quad \text{ otherwise }
\end{cases}
$$
and $$ \overline{d}(x,y) := \inf \sum_{i=0}^{N-1} \theta(a_i,a_{i+1}) $$
where the infimum is taken over all $N\in \N^*$ and all chains of points $a_0= x,\dots, a_N=y$.  

\medskip

Then $\overline{d}$ is a distance on $M$ such that $c \, d \leq \overline{d} \leq d$. This follows from Lemma~\ref{lem:equivalentdist} and Lemma~\ref{lem:distance} below.

\medskip

Next, we will prove that $\overline{x}$ is an isolated point of $B_{\overline{d}}(x_n,\rho_n)$ for all $n\geq n_0$. More precisely, by definition of $\overline{d}$, we have, for all $n\geq n_0$,
\begin{equation*}
\overline{d}(x_n,\overline{x}) \leq \theta(x_n,\overline{x}) = \rho_n\,,
\end{equation*}
hence $\overline{x} \in B_{\overline{d}}(x_n,\rho_n)$ for all $n\geq n_0$. On the other hand, we will prove in Lemma~\ref{lem:isolatedpoint} that 
\begin{equation} \label{e:isolatedpoint}
B_{\overline{d}}(x_n,\rho_n) \cap B_d(\overline{x},\frac{\rho_n}{n(n+1)}) = \{\overline{x}\}\,
\end{equation}
for all $n\geq n_0$.

Then let us extract a subsequence $(x_{n_k})_{k\geq 0}$ starting at $x_{n_0}$ in such a way that 
\begin{equation*} 
d(\overline{x},x_{n_k}) < \frac{\rho_{n_j}}{n_j(n_j+1)}
\end{equation*}
for all $k\geq 1$ and all $j\in\{0,\dots,k-1\}$. It follows from \eqref{e:isolatedpoint} that 
\begin{equation*}
\overline{d}(x_{n_k},x_{n_j}) > \rho_{n_j} = \max\{\rho_{n_j},\rho_{n_k}\}
\end{equation*}
for all $k\geq 1$ and all $j\in\{0,\dots,k-1\}$ (remember that the sequence $(\rho_h)_{h\geq 1}$ is assumed to be decreasing).

Then $\{B_{\overline{d}}(x_{n_k}, \rho_{n_k});\, k\in J\}$  is a family of Besicovitch balls for any finite set $J\subset \N$ which implies that w-BCP, and hence BCP, do not hold in $(M,\overline d)$. 
\end{proof}

\medskip

\begin{lem} \label{lem:equivalentdist}
We have $c \, d \leq \overline{d} \leq d$.
\end{lem}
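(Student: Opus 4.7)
The plan is to first establish pointwise inequalities for $\theta$, namely $c\,d(x,y) \leq \theta(x,y) \leq d(x,y)$ for all $x,y\in M$, and then propagate these through the chain infimum defining $\overline{d}$.

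For the upper bound $\theta \leq d$, the only non-trivial case is $\{x,y\}=\{x_n,\overline{x}\}$ for some $n\geq n_0$, where by definition $\theta(x,y)=\rho_n=\frac{n}{n+1}\,d(x_n,\overline{x})\leq d(x,y)$. The lower bound $\theta(x,y)\geq c\,d(x,y)$ is trivial when $\theta(x,y)=d(x,y)$, and in the remaining case we compute
\begin{equation*}
\theta(x_n,\overline{x}) \;=\; \frac{n}{n+1}\,d(x_n,\overline{x}) \;\geq\; \frac{n_0}{n_0+1}\,d(x_n,\overline{x}) \;>\; c\,d(x_n,\overline{x})
\end{equation*}
for every $n\geq n_0$, using that $n/(n+1)$ is increasing in $n$ and invoking condition \eqref{eq:c}, which reads $c<n_0/(n_0+1)$.

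To conclude $\overline{d}\leq d$, I would simply use the trivial one-step chain $a_0=x$, $a_1=y$, which yields $\overline{d}(x,y)\leq \theta(x,y)\leq d(x,y)$. For the bound $\overline{d}\geq c\,d$, given any chain $a_0=x,\ldots,a_N=y$, the pointwise inequality $\theta\geq c\,d$ on each edge together with the triangle inequality for $d$ yields
\begin{equation*}
\sum_{i=0}^{N-1}\theta(a_i,a_{i+1}) \;\geq\; c\sum_{i=0}^{N-1} d(a_i,a_{i+1}) \;\geq\; c\,d(x,y).
\end{equation*}
Taking the infimum over all chains gives $\overline{d}(x,y)\geq c\,d(x,y)$.

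There is no real obstacle here; the lemma is essentially a bookkeeping verification. The only subtle point is making sure the choice of $n_0$ in \eqref{eq:c} is exactly what is required for the $\theta$-level lower bound, which then passes automatically through the chain infimum.
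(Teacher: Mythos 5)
Your proof is correct and follows essentially the same route as the paper: pointwise bounds $c\,d\leq\theta\leq d$ (using the monotonicity of $s\mapsto s/(s+1)$ and condition \eqref{eq:c}), then passing to the chain infimum via the one-step chain for the upper bound and the triangle inequality for the lower bound. No issues.
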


\begin{proof}
By definition of $\theta$, one has $\theta(x,y)\leq d(x,y)$ for all $x\in M$ and $y\in M$. It follows that 
\begin{equation*}
\overline{d}(x,y) \leq \inf (\sum_{i=0}^{N-1} d(a_i,a_{i+1}); a_0= x,\dots, a_N=y) = d(x,y).
\end{equation*}
Note that since $d$ is a distance, one indeed has $$d(x,y)=\inf (\sum_{i=0}^{N-1} d(a_i,a_{i+1}); a_0= x,\dots, a_N=y)$$ which follows from one side from the triangle inequality and for the other side from  the fact that one can consider $N=1$, $a_0= x$ and $a_1=y$, so that $d(x,y)\geq \inf (\sum_{i=0}^{N-1} d(a_i,a_{i+1}); a_0= x,\dots, a_N=y)$.

\medskip

On the other hand, 
since $s\mapsto s/(s+1)$ is increasing, it follows from the definition of $\theta(x,y)$ and from \eqref{eq:c} that one has
\begin{equation} \label{e:thetad}
\theta(x,y) \geq  \frac{n_0}{n_0+1} \, d(x,y)\geq c \, d(x,y)
\end{equation}
for all $x\in M$ and $y\in M$. Hence 
\begin{equation*}
\overline{d}(x,y) \geq c \inf (\sum_{i=0}^{N-1} d(a_i,a_{i+1}); a_0= x,\dots, a_N=y) = c\, d(x,y)\,.
\end{equation*}
\end{proof}

\begin{lem} \label{lem:distance}
We have that $\overline{d}$ is a distance on $M$.
\end{lem}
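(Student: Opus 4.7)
The plan is to verify the four axioms of a distance for $\overline{d}$ in turn, noting that three of them reduce to elementary properties of $\theta$ and the chain construction, while the separation axiom is the only one requiring a non-trivial input, namely the bound $\overline{d}\geq c\, d$ established in Lemma~\ref{lem:equivalentdist}.

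First, non-negativity follows at once from $\theta\geq 0$, which in turn follows from $\theta$ being either a value of $d$ or one of the positive numbers $\rho_n$. For symmetry, I would observe that $\theta$ is symmetric in its arguments: the definition on the exceptional pairs $\{x_n,\overline{x}\}$ is written in set-theoretic form and hence symmetric, while $d$ itself is symmetric. Since reversing a chain $a_0=x,\dots,a_N=y$ yields the chain $a_N=y,\dots,a_0=x$ with the same total $\theta$-length, taking the infimum over chains gives $\overline{d}(x,y)=\overline{d}(y,x)$. The fact that $\overline{d}(x,x)=0$ is obtained by taking the trivial chain $a_0=a_1=x$ of length $1$, since $\{x,x\}\neq\{x_n,\overline x\}$ (as $x_n\neq\overline x$), so $\theta(x,x)=d(x,x)=0$.

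For the triangle inequality, given $x,y,z\in M$ and $\varepsilon>0$, I would choose a chain $a_0=x,\dots,a_N=y$ with $\sum \theta(a_i,a_{i+1})\leq \overline{d}(x,y)+\varepsilon$ and a chain $b_0=y,\dots,b_M=z$ with $\sum \theta(b_j,b_{j+1})\leq \overline{d}(y,z)+\varepsilon$. Concatenating at the common endpoint $y$ gives an admissible chain from $x$ to $z$ whose total $\theta$-length is at most $\overline{d}(x,y)+\overline{d}(y,z)+2\varepsilon$, hence $\overline{d}(x,z)\leq \overline{d}(x,y)+\overline{d}(y,z)$ upon letting $\varepsilon\to 0$.

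Finally, the separation axiom $\overline{d}(x,y)=0\Rightarrow x=y$ is where one genuinely uses something: by Lemma~\ref{lem:equivalentdist} we have $\overline{d}(x,y)\geq c\,d(x,y)$ with $c>0$, so $\overline{d}(x,y)=0$ forces $d(x,y)=0$ and hence $x=y$ since $d$ is a distance. I expect no real obstacle here; the only point worth checking carefully is the symmetry of $\theta$ on the exceptional pairs and the fact that $\theta(x,x)=0$, which ensures the chain construction never accidentally picks up a positive contribution from a trivial step.
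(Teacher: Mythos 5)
Your proposal is correct and follows essentially the same route as the paper: symmetry from the symmetry of $\theta$, the triangle inequality by concatenating chains, and the separation axiom from the lower bound $\overline{d}\geq c\,d$ of Lemma~\ref{lem:equivalentdist}. The extra checks you flag ($\theta\geq 0$, $\theta(x,x)=0$) are harmless elaborations the paper leaves implicit.
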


\begin{proof}
We get from Lemma~\ref{lem:equivalentdist} that if $\overline{d}(x,y)=0$ then $d(x,y)=0$ and hence $x=y$. Since $\theta(x,y) = \theta(y,x)$, one has $\overline{d}(x,y)=\overline{d}(y,x)$. To prove the triangle inequality, let us consider $x$, $y$ and $z$ in $M$ and two arbitrary chains of points $a_0=x,\dots,a_N=z$, $b_0=z,\dots,b_{N'}=y$. Since $a_0=x,\dots,a_N=z = b_0, \dots, b_{N'}=y$ is a chain of points from $x$ to $y$, one has
\begin{equation*}
\overline{d}(x,y) \leq \sum_{i=0}^{N-1} \theta(a_i,a_{i+1}) + \sum_{i=0}^{N'-1} \theta(b_i,b_{i+1})
\end{equation*}
and hence
\begin{equation*}
\overline{d}(x,y) \leq \overline{d}(x,z) + \overline{d}(z,y).
\end{equation*}
\end{proof}

\begin{lem} \label{lem:isolatedpoint}
Let $n\geq n_0$. Assume that $0< d(\overline{x},y) < \displaystyle\frac{\rho_n}{n(n+1)}$. Then  $\overline{d}(x_n,y) > \rho_n$.
\end{lem}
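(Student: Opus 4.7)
The plan is to bound every chain $a_0=x_n,a_1,\dots,a_N=y$ from below by $\rho_n+\varepsilon$ for some chain-independent $\varepsilon>0$. Since $\overline{d}(x_n,y)$ is the infimum of such chain costs $s:=\sum_{i=0}^{N-1}\theta(a_i,a_{i+1})$, this will force the strict inequality $\overline{d}(x_n,y)>\rho_n$.

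\textbf{Easy case.} The function $\theta$ differs from $d$ only on pairs of the form $\{\overline{x},x_m\}$ with $m\geq n_0$, so an edge can only be ``discounted'' below its $d$-length when one of its endpoints equals $\overline{x}$. In particular, if the chain never visits $\overline{x}$ then every edge contributes its full $d$-length, and the cost is at least
\[
d(x_n,y)\geq d(x_n,\overline{x})-d(\overline{x},y)>\tfrac{n+1}{n}\rho_n-\tfrac{\rho_n}{n(n+1)}=\tfrac{n+2}{n+1}\rho_n,
\]
using $d(x_n,\overline{x})=\tfrac{n+1}{n}\rho_n$ and the hypothesis $d(\overline{x},y)<\tfrac{\rho_n}{n(n+1)}$. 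This already beats $\rho_n$ by $\tfrac{\rho_n}{n+1}$.

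\textbf{Chain through $\overline{x}$.} Let $j$ be the smallest index with $a_j=\overline{x}$ and split $s=s_1+s_2$ accordingly. In the sub-chain $a_0,\dots,a_j$, only the \emph{last} edge can be a shortcut, since the preceding vertices all differ from $\overline{x}$. If the last edge is not a shortcut, then $s_1\geq d(x_n,\overline{x})=\tfrac{n+1}{n}\rho_n>\rho_n$. If it is a shortcut $(x_m,\overline{x})$ with $m\geq n_0$, then $s_1\geq d(x_n,x_m)+\rho_m$, and I will verify $d(x_n,x_m)+\rho_m\geq\rho_n$ by a three-way case split on $m$ versus $n$. For $m=n$ it is immediate. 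For $m>n$ one has $\rho_m\leq\rho_n$ and the reverse triangle inequality $d(x_n,x_m)\geq d(x_n,\overline{x})-d(x_m,\overline{x})$ gives $s_1\geq\tfrac{n+1}{n}\rho_n-\tfrac{\rho_m}{m}>\rho_n$, because $\rho_m/m<\rho_n/n$ by monotonicity of $(\rho_k)$. For $n_0\leq m<n$ one has $\rho_m\geq\rho_n$ and the opposite reverse triangle inequality $d(x_n,x_m)\geq d(x_m,\overline{x})-d(x_n,\overline{x})$ gives $s_1\geq\tfrac{2m+1}{m}\rho_m-\tfrac{n+1}{n}\rho_n>\rho_n$, since $\tfrac{2m+1}{m}-\tfrac{n+1}{n}=1+\tfrac{1}{m}-\tfrac{1}{n}>1$ and $\rho_m\geq\rho_n$. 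In every sub-case $s_1\geq\rho_n$, and since the tail sub-chain links $\overline{x}$ to $y\neq\overline{x}$, its cost $s_2$ is bounded below by $\overline{d}(\overline{x},y)>0$ (Lemma~\ref{lem:distance}).

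\textbf{Conclusion and main obstacle.} Thus every chain cost exceeds $\rho_n$ by at least $\min\bigl(\tfrac{\rho_n}{n+1},\,\overline{d}(\overline{x},y)\bigr)>0$, and taking the infimum over chains yields $\overline{d}(x_n,y)>\rho_n$. The main technical obstacle is the shortcut sub-case with $m\neq n$: one has to select the correct direction of the reverse triangle inequality and exploit the monotonicity of $(\rho_k)$ to absorb the discount $\rho_m/m$ against $\rho_n/n$. This is precisely where the specific renormalisation $\rho_n=\tfrac{n}{n+1}d(x_n,\overline{x})$ — making the discount equal to $\rho_n/n$ rather than, say, $d(x_n,\overline{x})$ itself — pays off.
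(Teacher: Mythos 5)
Your proof is correct, and it takes a genuinely different route from the paper's. The paper argues by contradiction: it takes a chain of cost at most $\rho_n+\eps$, uses the total budget to rule out the edge $\{x_n,\overline{x}\}$ and any shortcut $\{\overline{x},x_m\}$ with $m\leq n$ (such an edge alone would cost at least $\rho_{n-1}>\rho_n+\eps$), deduces the uniform edgewise comparison $\theta(a_i,a_{i+1})\geq\frac{n+1}{n+2}\,d(a_i,a_{i+1})$, and then a single application of the triangle inequality for $d$ gives $\rho_n+\eps\geq\frac{n+1}{n+2}\,d(x_n,y)$, which contradicts exactly the hypothesis $d(\overline{x},y)<\frac{\rho_n}{n(n+1)}$. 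You instead bound every chain directly, splitting at the first visit to $\overline{x}$: the only possible discounted edge in the initial segment is the last one, and you control it for every $m$ (including $m\leq n$, which the paper excludes rather than estimates) by the reverse triangle inequality and the monotonicity of $(\rho_k)$, getting $s_1\geq\rho_n$ in all cases; strictness then comes from $s_2\geq\overline{d}(\overline{x},y)>0$, while chains avoiding $\overline{x}$ are handled by the plain triangle inequality. A by-product of your argument is a slightly stronger statement: it only needs $0<d(\overline{x},y)<\rho_n/n$ (the threshold $\rho_n/(n(n+1))$ enters only through the hypothesis you are given), and your lower bound $\overline{d}(x_n,y)\geq\rho_n+\min\bigl(\tfrac{\rho_n}{n+1},\overline{d}(\overline{x},y)\bigr)$ is quantitative; the paper's proof is shorter because one uniform comparison $\theta\geq\frac{n+1}{n+2}d$ disposes of all edges at once, but it is tied to the specific constant in the hypothesis. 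Your citation of Lemma~\ref{lem:distance} (or equivalently $\overline{d}\geq c\,d$ from Lemma~\ref{lem:equivalentdist}) for $\overline{d}(\overline{x},y)>0$ introduces no circularity, since those lemmas are proved independently of this one.
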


\begin{proof}
By contradiction, assume that $0< d(\overline{x},y) < \displaystyle\frac{\rho_n}{n(n+1)}$ for some $n\geq n_0$ and   $\overline{d}(x_n,y) \leq \rho_n$. Let $\eps>0$ and $a_0=x_n,\dots,a_N=y$ be such that 
\begin{equation}  \label{e:0}
\sum_{i=0}^{N-1} \theta(a_i,a_{i+1}) \leq \rho_n + \eps. 
\end{equation}

First, we claim that $\{a_i,a_{i+1}\}\not=\{x_n,\overline{x}\}$ for all $i\in\{0,\dots,N-1\}$ provided $\eps$ is small enough. Indeed, otherwise, with no loss of generality, we would have $a_0=x_n$ and $a_1=\overline{x}$, and hence
\begin{equation*}  
\sum_{i=0}^{N-1} \theta(a_i,a_{i+1}) = \theta(x_n,\overline{x}) + \sum_{i=1}^{N-1} \theta(a_i,a_{i+1}) = \rho_n + \sum_{i=1}^{N-1} \theta(a_i,a_{i+1}) \leq \rho_n +\eps
\end{equation*}
which implies that
$$\sum_{i=1}^{N-1} \theta(a_i,a_{i+1}) \leq \eps\,.$$
On the other hand, \eqref{e:thetad} together with the triangle inequality would give
\begin{equation*}
c \, d(\overline{x},y) \leq c \sum_{i=1}^{N-1} d(a_i,a_{i+1}) \leq \sum_{i=1}^{N-1} \theta(a_i,a_{i+1})\leq \eps
\end{equation*}
which is impossible as soon as $\eps < c \, d(\overline{x},y)$.

Next, we claim that
\begin{equation} \label{e:1}
\theta(a_i,a_{i+1})\geq \frac{n+1}{n+2} \, d(a_i,a_{i+1})
\end{equation}
for all $i\in\{0,\dots,N-1\}$.

Indeed, first, if $\{a_i,a_{i+1}\} = \{\overline{x},x_m\}$ for some $m\geq n_0$, then we must have $m>n$. Otherwise, since $(\rho_h)_{h\geq 1}$ is decreasing, we would have $\rho_m\geq \rho_{n-1}$. Hence we would get
$$\rho_{n-1}\leq \rho_m = \theta(a_i,a_{i+1}) \leq \sum_{j=0}^{N-1} \theta(a_j,a_{j+1}) \leq \rho_n +\eps$$
which is impossible as soon as $\eps <\rho_{n-1} - \rho_n$.

Next, if $\{a_i,a_{i+1}\} = \{\overline{x},x_m\}$ for some $m>n$, then, by definition of $\theta$ and remembering that $s\mapsto s/(s+1)$ is increasing, we have 
$$\theta(a_i,a_{i+1}) = \rho_m = \frac{m}{m+1}\, d(a_i,a_{i+1}) \geq \frac{n+1}{n+2} \,d(a_i,a_{i+1})$$
which gives \eqref{e:1}.

Finally, if $\{a_i,a_{i+1}\} \not= \{\overline{x},x_m\}$ for all $m\geq n_0$, then it follows from the definition of $\theta$ that
$$\theta(a_i,a_{i+1}) = d(a_i,a_{i+1}) \geq \frac{n+1}{n+2} \,d(a_i,a_{i+1})$$
which also gives \eqref{e:1}.

Now, it follows from \eqref{e:0} and \eqref{e:1} that 
\begin{equation*}
\rho_n + \eps \geq \sum_{i=1}^{N-1} \theta(a_i,a_{i+1}) \geq \frac{n+1}{n+2} \sum_{i=1}^{N-1} d(a_i,a_{i+1}) \geq \frac{n+1}{n+2} \, d(x_n,y) 
\end{equation*}
for all $\eps$ small enough. Letting $\eps\downarrow 0$, we get that 
\begin{equation*}
\begin{split}
\rho_n \geq \frac{n+1}{n+2}\, d(x_n,y) \geq \frac{n+1}{n+2}\, \left(d(x_n,\overline{x}) - d(\overline{x},y)\right)
\geq \frac{n+1}{n+2} \,\left(\frac{n+1}{n}\rho_n - d(\overline{x},y)\right) 
\end{split}
\end{equation*}
and hence $$d(\overline{x},y) \geq \frac{\rho_n}{n(n+1)}$$ which contradicts the assumptions and concludes the proof.
\end{proof}

\medskip

\noindent \textbf{Acknowledgement.} The authors are very grateful to Jeremy Tyson for useful discussions and in particular for pointing out the link between the distances $d_\alpha$ and the distances of negative type considered by Lee and Naor. The second author would like to thank for its hospitality the Department of Mathematics and Statistics of the University of Jyv\"askyl\"a where part of this work was done.


\end{document}